
\documentclass[12pt,reqno,a4paper]{amsart}

\usepackage{mdframed}
\usepackage{setspace}
\usepackage{mathrsfs}

\usepackage[latin1]{inputenc}


\usepackage[linkcolor=red,colorlinks=true]{hyperref}

\oddsidemargin0.3cm
\evensidemargin0.3cm
\textwidth15.5cm
\textheight21.5 cm
\topmargin=1.5cm

\setlength{\footskip}{20pt}

\usepackage{etoolbox}
\usepackage{amsmath}
\usepackage{enumerate}
\usepackage{amssymb}
\usepackage{amscd}
\usepackage{amsthm}
\usepackage{amsfonts}
\usepackage{graphicx}
\usepackage[all,cmtip]{xy}
\usepackage{enumitem}

\patchcmd{\subsection}{-.5em}{.5em}{}{}
\patchcmd{\subsubsection}{-.5em}{.5em}{}{}

\usepackage{enumitem}






\makeatother
\usepackage{hyperref}

\bibliographystyle{amsplain}

\numberwithin{equation}{section}

\newcommand{\SL}{\operatorname{SL}}



\newcommand{\cL}{\mathcal{L}}

\newcommand{\cN}{\mathcal{N}}


\newcommand{\bN}{\mathbb{N}}

\newcommand{\bR}{\mathbb{R}}

\newcommand{\bZ}{\mathbb{Z}}





\newcommand{\ra}{\rightarrow}


\newcommand{\qand}{\quad \textrm{and} \quad}


\newcommand\subsetsim{\mathrel{%
\ooalign{\raise0.2ex\hbox{$\subset$}\cr\hidewidth\raise-0.8ex\hbox{\scalebox{0.9}{$\sim$}}\hidewidth\cr}}}
\newcommand{\eps}{\varepsilon}


\DeclareMathOperator{\Ad}{Ad}

\DeclareMathOperator{\supp}{supp}

\definecolor{lichtgrijs}{gray}{0.95}
\mdfdefinestyle{mystyle}{ %
    backgroundcolor=lichtgrijs, %
    linewidth=0pt, %
    innertopmargin=10pt, %
    innerbottommargin=10pt,%
    nobreak=true
}

\newtheorem{theorem}{Theorem}[section]
\newtheorem{corollary}[theorem]{Corollary}
\newtheorem{proposition}[theorem]{Proposition}
\newtheorem{lemma}[theorem]{Lemma}

\theoremstyle{definition}

\newtheorem{remark}[theorem]{Remark}

\newtheorem{example}{Example}[section]



%


\usepackage{changepage}
\usepackage{mathtools}
\usepackage{graphicx}
\usepackage{calc}

\usepackage[toc,page]{appendix}

\usepackage{scalerel}

\usepackage{extarrows}



\begin{document}
\bibliographystyle{plain} 

\title[Decorrelation estimates for translated measures]{Decorrelation estimates for translated measures under diagonal flows}

\author{Michael Bj\"orklund}

\author{Reynold Fregoli} 

\author{Alexander Gorodnik}

\maketitle

\begin{abstract}
A profound link between Homogeneous Dynamics and Diophantine Approximation is based on 
an observation that Diophantine properties of a real matrix $B$ are encoded by 
the corresponding lattice $\Lambda_B$ translated by a multi-parameter semigroup $a(t)$.
We establish quantitative decorrelation estimates for measures supported on leaves $a(t)\Lambda_B$ with the error terms depending only on the minimum
of the pairwise distances between the parameters. The proof involves a careful
analysis of the translated measures 
in the products of the spaces of unimodular lattices
and establishes quantitative equidistributions  to 
measures supported on various intermediate homogeneous subspaces.
\end{abstract}

\setcounter{tocdepth}{1}
\tableofcontents

\section{Introduction}

When studying Diophantine properties of a real $(m\times n)$-matrix 
$B$, it is often natural to consider the corresponding unimodular lattice
$$
\Lambda_B:=\{ (p+Bq,q):\,\, p\in \bZ^m,\, q\in \bZ^n\} \subset  \bR^{m+n},
$$
and view it as a point in the space $X$ of all $(m+n)$-dimensional unimodular 
lattices. It turns out that many Diophantine properties of $B$ can be equivalently expressed in
terms of the asymptotic behaviour of the orbit of $\Lambda_B$ under the \emph{diagonal flow}
$$
a(t):=\hbox{diag}\big(e^{t^{(1)}},\ldots, e^{t^{(m)}},e^{-t^{(m+1)}},\ldots, e^{-t^{(m+n)}}\big) \in \SL_{m+n}(\bR),
$$
where $t=(t^{(1)},\ldots,t^{(m+n)})$ belongs to the additive sub-semigroup $A^{+} \subset \bR_{+}^{m+n}$
defined by
$$
A^+:=\big\{t \in \bR_+^{m+n}:\, t^{(1)}+\cdots+t^{(m)}=t^{(m+1)}+\cdots+t^{(m+n)}\big\}.
$$
For instance, Kleinbock and Margulis \cite{KM0} observed
that Khinchin's classical theorem in Diophantine approximate is intimately 
related to the visits to shrinking cuspidal neighborhoods in $X$ of the orbit
of $\Lambda_B$ under the sub-flow $g_t = a(t/m,\ldots,t/m,t/n,\ldots,t/n)$ for $t \geq 0$.
In particular, the crucial Borel-Cantelli property needed in their proof was derived
from the asymptotic "independence" of the lattices $g_{t_1} \Lambda_B$ and $g_{t_2} \Lambda_B$
as $t_1, t_2$ and $|t_1-t_2|$ all tend to infinity. More generally, the orbit of $\Lambda_B$ under larger sub-flows of $a(A^{+})$ encode 
\emph{multiplicative} Diophantine approximation properties of the matrix $B$ (see e.g. \cite{BFG1}). \\

Systematic implementation of the idea that various objects in Number Theory
are asymptotically independent goes back to the monograph \cite{Ph}
of W.~Philipp, who developed a general framework for proving probabilistic limit theorems
for weakly independent processes having arithmetic applications in mind.
In particular, this led to profound limit theorems for continued fractions,
where the asymptotic independence was derived from properties of the Gauss map.
In the higher-dimensional setting, the asymptotic independence property of translated lattices $a(t)\Lambda_B$
provides a promising approach to generalize these ideas
and hopefully to establish new limit theorems in Diophantine Approximation
and Geometry of Numbers. For instance, we refer to recent works \cite{AG,BG1,BG3,BG4,DFV,DFL},
where this approach was successfully applied. \\


Our paper is devoted to proving a general asymptotic independence result for lattices of the form $a(t)\Lambda_B$ with a random $(m \times n)$-matrix $B$.
To make this more precise, let us introduce some notation. Fix integers $m,n \geq 1$ once and for all. Let $X$ denote the space of all unimodular lattices in $\bR^{m+n}$ and let $\mu$ denote the unique $\SL_{m+n}(\bR)$-invariant probability measure on $X$. Define the sub-manifold  
$$
Y:=\big\{\Lambda_B:\, B\in \hbox{Mat}_{m\times n}(\bR/\bZ)\big\}\subset X,
$$
which can be identified with the standard $(m+n)$-torus $\bR^{m+n}/\bZ^{m+n}$, and equip $Y$ with the Lebesgue measure $\nu$ coming from this torus. Our aim is to show that, given any smooth 
functions $\varphi_1,\ldots,\varphi_r$ on $X$, and $t_1,\ldots, t_r \in A^{+}$, we have
\begin{equation}
\label{approx}
\int_Y \left(\prod_{s=1}^r \varphi_s\circ a(t_s) \right)\, d\nu
\approx
\prod_{s=1}^r \int_Y \varphi_s\circ a(t_s) \, d\nu
\end{equation}
with an explicit error term which \emph{only} depends on the function
$$
\Delta(t_1,\ldots, t_r):=\min_{i \ne j} \|t_i-t_j\|,
$$
where $\|\cdot\|$ denotes the maximum norm on $\bR^{m+n}$. Note that \eqref{approx} only involves
correlations with respect to $\nu$, and not $\mu$. We refer to \eqref{approx} as (higher order) \emph{decorrelation}, as opposed to (higher order) \emph{equidistribution} when one instead proves \eqref{approx} with $\mu$ on the right-hand side, but often with a significantly worse error term (see e.g. \cite{BFG1, BG1,BG2}), which is really only useful when all coordinates in $t$ tend to infinity. 
More precisely, the error term in this case depend on the minimum of $\Delta(t_1,\ldots,t_r)$
and the function
\[
\min(\lfloor t_1 \rfloor,\ldots, \lfloor t_r \rfloor),
\]
where $\lfloor t \rfloor = \min t^{(s)}$ for $t = (t^{(1)},\ldots, t^{(r)}) \in A^{+}$. To prove \eqref{approx} we need to (quantitatively) understand \emph{all} limits of translated measures $a(t)\nu$ as $t$ tends to infinity in $A^{+}$ (allowing for some coordinates in $t$ to stay bounded). \\

The problem of limiting distribution of measures
$a(t)_*\nu$ in $X$  
can be studied using Ratner`s measure classification \cite{R} for measures invariant under unipotent flows.
In particular, the case of measures supported on horospherical orbits was handled by Dani \cite{D1,D2} and generalized  
by Shah \cite{Shah}. 
More generally, this approach can be potentially used to study
the distribution of translated measures embedded diagonally in $X\times \cdots \times X$, but
this method does not provide quantitative bounds.
Regarding quantitative estimates,
when $a_t$ is a \emph{one-parameter} partially hyperbolic flow and 
$\nu$ is a smooth measure supported on an unstable leaf of $a_t$, the quantitative asymptotic behaviour of the measures $(a_t)_*\nu$ as $t\to\infty$ 
was investigated by Kleinbock and Margulis in \cite{KM1}.
The main difficulty of our present setting is that 
$Y$ is not an unstable leaf for the family of transformations $a(t)$. 
The first quantitative result in this setting was established by Kleinbock and Margulis in \cite{KM2}:

\begin{theorem}[Kleinbock, Margulis]\label{th:km}
There exist $\ell\ge 1$ and $\delta>0$ such that for every $f\in C^\infty (Y)$, 
$\varphi\in C_c^\infty (X),$ and $t\in A^+$,
$$
\int_Y f\cdot(\varphi\circ a(t))\, d\nu=\left(\int_Y f\, d\nu\right)  	
\left(\int_X \varphi\, d\mu\right)+O\Big( e^{-\delta \lfloor t\rfloor}\, \|f\|_{C^\ell}\, \cN_\ell(\varphi)\Big),
$$
where $\cN_\ell(\varphi):=\max \left(\|\varphi\|_{C^0},\|\varphi\|_{Lip}, \|\varphi\|_{L^{2}_\ell}\right)$. Here $\| \cdot \|_{Lip}$ and $\|\cdot\|_{L^2_\ell}$ refers to the Lipschitz norm and
the $L^2$-Sobolev norm of order $\ell$ respectively.
\end{theorem}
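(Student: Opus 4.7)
The plan is to derive Theorem \ref{th:km} from the exponential mixing of the diagonal flow $a(t)$ on $(X,\mu)$, combined with a \emph{Margulis thickening} argument. Writing $\Gamma=\SL_{m+n}(\bZ)$ so that $X=\SL_{m+n}(\bR)/\Gamma$, set
$$
U=\Big\{u_B=\begin{pmatrix} I_m & B\\ 0 & I_n\end{pmatrix}:B\in\Mat_{m\times n}(\bR)\Big\}.
$$
Then $Y=U\cdot\bZ^{m+n}$ is the closed orbit of $U$ through the standard lattice, and $U$ is the full unstable horospherical subgroup of $a(t)$ for every $t$ in the interior of $A^+$. Under this identification, $\nu$ becomes Haar measure on $U/(U\cap\Gamma)$ and the target integral is
$$
\int_Y f\cdot(\varphi\circ a(t))\, d\nu=\int_{U/(U\cap\Gamma)} f(u\Gamma)\,\varphi(a(t)u\Gamma)\, du.
$$

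The main analytic input is the effective mixing estimate
$$
\Big|\int_X F\cdot (G\circ a(t))\, d\mu-\int F\,d\mu\int G\,d\mu\Big|\ll e^{-\delta \lfloor t\rfloor}\,\|F\|_{L^2_k}\|G\|_{L^2_k},
$$
which follows from the spectral gap of the regular representation of $\SL_{m+n}(\bR)$ on $L^2_0(X,\mu)$; the rate is governed by $\lfloor t\rfloor$ because the matrix coefficients of $a(t)$ in nontrivial automorphic representations decay in terms of the minimal coordinate of $t$. To apply this to the $Y$-integral I would thicken $f\,d\nu$ to a smooth function on $X$. Fix a small $\eta>0$, a local cross-section $V$ of $U$ in $G$, and a non-negative bump $\rho_\eta\in C^\infty_c(V)$ with $\int\rho_\eta=1$ supported in an $\eta$-neighborhood of the identity, and set
$$
F_\eta(vu\Gamma):=\rho_\eta(v)\,f(u\Gamma)\qquad\text{for } u\in U,\,v\in V\text{ small},
$$
extended by zero off a tubular neighborhood of $Y$. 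Then $\int_X F_\eta\, d\mu=\int_Y f\, d\nu$ and $\|F_\eta\|_{L^2_k}\lesssim \eta^{-k+\frac{1}{2}\dim V}\|f\|_{C^k}$.

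Feeding $(F_\eta,\varphi)$ into effective mixing produces the main term $\int_Y f\,d\nu\int_X\varphi\,d\mu$ plus an error of size $e^{-\delta\lfloor t\rfloor}\eta^{-k+\frac{1}{2}\dim V}\,\|f\|_{C^\ell}\,\cN_\ell(\varphi)$. On the other hand, the difference between the thickened integral and the desired $Y$-integral equals
$$
\int_V\int_{U/(U\cap\Gamma)}\rho_\eta(v)\,f(u\Gamma)\Big(\varphi(a(t)vu\Gamma)-\varphi(a(t)u\Gamma)\Big)\,du\,dv,
$$
which, after writing $\varphi(a(t)vu\Gamma)=\varphi\big((a(t)va(t)^{-1})\cdot a(t)u\Gamma\big)$ and Taylor-expanding in the second argument, is bounded by $\sup_{v\in\supp\rho_\eta}\|\Ad(a(t))v\|\cdot\|\varphi\|_{\mathrm{Lip}}\cdot\|f\|_{C^0}$. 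Choosing $\eta=e^{-c\lfloor t\rfloor}$ for an appropriate $c>0$ balances these two errors and yields the bound $e^{-\delta'\lfloor t\rfloor}\|f\|_{C^\ell}\,\cN_\ell(\varphi)$ claimed in the theorem.

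The principal obstacle — absent in the classical one-parameter case of \cite{KM1} — lies in the choice of the cross-section $V$. While the opposite horospherical subgroup $U^-$ is genuinely contracted by $a(t)$ at rate $\ge e^{2\lfloor t\rfloor}$, the off-diagonal blocks inside the Levi factor $\mathfrak{sl}_m\oplus\mathfrak{sl}_n$ are acted on by $\Ad(a(t))$ with eigenvalues $e^{\pm(t^{(i)}-t^{(j)})}$, which may reach $e^{\|t\|}$ and hence dwarf $e^{\lfloor t\rfloor}$. A naive thickening along the whole opposite parabolic $P^-=L\cdot U^-$ therefore leaves a thickening error that is too large. The remedy is to construct $V$ (and $\rho_\eta$) \emph{anisotropically}, with the radius of the bump in each root direction $\alpha$ matched to the $\Ad(a(t))$-expansion rate of $\alpha$, so that $\|\Ad(a(t))v\|\le\eta$ uniformly on $\supp\rho_\eta$, while retaining a local diffeomorphism with a neighborhood of $Y$ and controlled Sobolev norms. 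Combined with the non-divergence estimate of \cite{KM0} to ensure that the thickened mass does not escape into the cusps of $X$, this $t$-adapted thickening is the technical heart of the proof.
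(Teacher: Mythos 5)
Note first that Theorem \ref{th:km} is a quoted result of Kleinbock--Margulis \cite{KM2}; the paper does not reprove it, and the closest it comes is the proof sketch of the generalization Theorem \ref{th:km_new}, which follows the strategy of \cite{KM2}. Your proposal adopts the right general philosophy (thickening plus effective mixing) and correctly identifies the central obstruction, namely that the off-diagonal Levi directions transverse to $\Lie(U)$ are expanded by $\Ad(a(t))$ at rates as large as $e^{\lceil t\rceil-\lfloor t\rfloor}$. However, your proposed fix does not close the argument. If you shrink the bump $\rho_\eta$ in such a direction to radius $\eta\, e^{-(\lceil t\rceil-\lfloor t\rfloor)}$ so that $\|\Ad(a(t))v\|\le\eta$ on $\supp\rho_\eta$, then the Sobolev norm of the thickened function is \emph{not} controlled: it grows like $e^{C(\lceil t\rceil-\lfloor t\rfloor)}$ with $C\ge \ell\ge 1$ (each derivative in the shrunken direction, and even the $L^2$-normalization, costs a power of the inverse radius). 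Effective mixing only supplies a decay $e^{-\delta\kappa(t)}$ with $\kappa(t)\lesssim\lceil t\rceil$ and $\delta$ a small fixed spectral-gap constant, so the product blows up whenever $\lfloor t\rfloor$ is substantially smaller than $\lceil t\rceil$ (e.g. $t$ with coordinates of sizes $T$ and $\sqrt T$), which is precisely the regime where the theorem has content. Your parenthetical claim that the matrix-coefficient decay is governed by the minimal coordinate of $t$ is also not the right explanation: the decay is controlled by the Cartan projection of $a(t)$, hence by $\lceil t\rceil$; the exponent $\lfloor t\rfloor$ in the theorem comes from the geometry of the translated leaf, not from mixing.

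The way \cite{KM2} --- and the paper's proof of Theorem \ref{th:km_new} --- avoids this is to never perform a thickening adapted to $a(t)$ itself. One isolates the one-parameter subgroup $g_s=a(s/m,\dots,s/m,s/n,\dots,s/n)$, for which $U$ is the full expanding horospherical subgroup and, crucially, the complementary directions are non-expanding under $\Ad(g_s)$; an isotropic, $t$-independent thickening then yields effective equidistribution of $g_s$-translates of small $U$-plaques. The general translate $a(t)$ is reduced to this case by factoring $a(t)$ through $g_s$ with $s\asymp\lfloor t\rfloor$ (the remaining factor acts non-contractingly on $\Lie(U)$), combined with the non-divergence estimates of \cite{KM0} and \cite[Cor.~3.4]{KM2} and injectivity-radius control to localize the leaf. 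This reduction step, which is where $\lfloor t\rfloor$ actually enters the error term, is missing from your argument and cannot be replaced by the anisotropic bump.
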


Shi \cite{Shi} established the following extension to higher-order correlations:

\begin{theorem}[Shi]\label{th:shi}
	There exist $\ell\ge 1$ and  $\delta^\prime>0$ such that for every $f\in C^\infty (Y)$, 
	$\varphi_1,\ldots,\varphi_r\in C_c^\infty (X),$ and $t_1,\ldots,t_r\in A^+$,
	\begin{align*}
	\int_Y f\left(\prod_{s=1}^r \varphi_s\circ a(t_s)\right)\, d\nu=&\int_Y f\, d\nu
	\prod_{s=1}^r \int_X \varphi_s\, d\mu\\
	&\quad+O\left( e^{-\delta^\prime \min\big(\lfloor t_1\rfloor, \lfloor t_2-t_1\rfloor,\ldots,\lfloor t_r-t_{r-1}\rfloor\big)} \|f\|_{C^\ell}\prod_{s=1}^ r \cN_\ell(\varphi_s)\right).
	\end{align*}
\end{theorem}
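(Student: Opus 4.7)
The plan is to proceed by induction on $r$, with Theorem \ref{th:km} as the base case $r=1$. Without loss of generality, reorder the $t_i$ so that the spacings $s_i := t_i - t_{i-1}$ (with $t_0 := 0$) all lie in $A^+$, which is the regime where the target error is non-trivial. The inductive step will reduce the $r$-fold correlation on $(Y,\nu)$ to an $r$-fold correlation on $(X,\mu)$ plus a Theorem \ref{th:km}-type remainder, and then handle the $X$-correlation via a separate effective multi-mixing argument.

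The key algebraic manipulation uses the group identity $a(t_s) = a(t_s - t_{r-1}) \cdot a(t_{r-1})$ to rewrite
$$\prod_{s=1}^r \varphi_s \circ a(t_s) \;=\; \Psi \circ a(t_{r-1}), \qquad \Psi(y) \,:=\, \prod_{s=1}^{r-1} \varphi_s\bigl(a(t_s - t_{r-1}) y\bigr) \cdot \varphi_r\bigl(a(s_r) y\bigr),$$
with $s_r = t_r - t_{r-1}$. Applying Theorem \ref{th:km} with $\varphi = \Psi$ and $t = t_{r-1}$ gives
$$\int_Y f \cdot (\Psi \circ a(t_{r-1}))\, d\nu \;=\; \Bigl(\int_Y f\, d\nu\Bigr)\Bigl(\int_X \Psi\, d\mu\Bigr) + O\bigl(e^{-\delta \lfloor t_{r-1} \rfloor}\, \|f\|_{C^\ell}\, \cN_\ell(\Psi)\bigr).$$
By $\SL_{m+n}(\bR)$-invariance of $\mu$, one has $\int_X \Psi\, d\mu = \int_X \Psi \circ a(t_{r-1})\, d\mu = \int_X \prod_s \varphi_s \circ a(t_s)\, d\mu$, a pure multi-correlation on $X$. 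An effective multi-mixing theorem for the $a$-action on $(X,\mu)$, proven by a parallel induction from effective bi-mixing (a consequence of Howe--Moore together with the spectral gap for $\SL_{m+n}(\bR)/\SL_{m+n}(\bZ)$), then decorrelates this into $\prod_s \int_X \varphi_s\, d\mu$ with error of order $e^{-\delta' \min_i \lfloor s_i \rfloor}\prod_s \cN_\ell(\varphi_s)$. Since $\lfloor t_{r-1} \rfloor \geq \sum_{i < r} \lfloor s_i \rfloor \geq (r-1)\min_i \lfloor s_i \rfloor$, the factor $e^{-\delta \lfloor t_{r-1} \rfloor}$ is also controlled by the target rate.

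The main technical obstacle is the blow-up of $\cN_\ell(\Psi)$ in the translation parameters: conjugation by $a(t)$ inflates Lipschitz and $L^2_\ell$-norms by factors of order $e^{C\ell \|t\|_\infty}$, so the direct bound $\cN_\ell(\Psi) \lesssim e^{C\ell \sum_s \|t_s - t_{r-1}\|} \prod_s \cN_\ell(\varphi_s)$ is far too large relative to the decay $e^{-\delta \lfloor t_{r-1} \rfloor}$ afforded by Theorem \ref{th:km}. Overcoming this requires an approximation step: replace $\Psi$ by a mollified version $\Psi_\varepsilon := \Psi * \rho_\varepsilon$, where $\rho_\varepsilon$ is a smooth bump of radius $\varepsilon$ on $\SL_{m+n}(\bR)$, chosen so that its convolution absorbs the directions most inflated by $a$-conjugation. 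Tuning $\varepsilon$ as a small exponential in $\lfloor t_{r-1} \rfloor$ balances the $L^\infty$-approximation error $\|\Psi - \Psi_\varepsilon\|_\infty$ against $e^{-\delta \lfloor t_{r-1} \rfloor}\cN_\ell(\Psi_\varepsilon)$ and yields a net bound of the desired form $e^{-\delta''\min_i \lfloor s_i \rfloor}\, \|f\|_{C^\ell}\prod_s \cN_\ell(\varphi_s)$. This approximation step, together with setting up the parallel multi-mixing induction on $X$, is the most delicate part of the argument.
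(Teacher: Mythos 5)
First, a bookkeeping point: the paper does not prove this statement --- it is quoted from Shi's paper \cite{Shi} as known input --- so there is no internal proof to compare against. The closest analogue in the paper is the proof of Theorem \ref{th:multi_gen} in Section \ref{sec:multi}, and measured against that (or against any correct argument) your proposal has a genuine quantitative gap at its central step.

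The gap is in the reduction to a single application of Theorem \ref{th:km} with $\varphi=\Psi$. You correctly identify that $\cN_\ell(\Psi)$ blows up, but the mollification cannot repair this. Writing out the balance: the replacement of $\Psi$ by $\Psi_\varepsilon$ costs $O\big(\varepsilon\,\|\Psi\|_{Lip}\big)$ in $C^0$, while $\cN_\ell(\Psi_\varepsilon)\gg \varepsilon^{-a}\|\Psi\|_{C^0}$ for some fixed $a\ge \ell$. Optimizing $\varepsilon$ leaves an error of the shape $e^{-\delta\lfloor t_{r-1}\rfloor/(1+a)}\,\|\Psi\|_{Lip}^{a/(1+a)}$, so you need $\|\Psi\|_{Lip}^{a}\ll e^{\delta\lfloor t_{r-1}\rfloor}$. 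But $\|\Psi\|_{Lip}$ is inflated by $\|\Ad(a(t_s-t_{r-1}))\|$, which is $e^{c\max_s\lceil t_{r-1}-t_s\rceil}$ with $c$ an absolute constant coming from the root system. This fails for two independent reasons. First, even when $\lceil t_{r-1}\rceil$ and $\lfloor t_{r-1}\rfloor$ are comparable (e.g.\ $m=n=1$), the inequality $ca\lceil t_{r-1}\rceil<\delta\lfloor t_{r-1}\rfloor$ is impossible because $\delta$ is a small fixed constant handed to you by Theorem \ref{th:km} while $ca\ge 1$; no tuning of $\varepsilon$ changes the competition between a large fixed inflation exponent and a small fixed decay exponent. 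Second, for general $m,n$ the ratio $\lceil t_{r-1}\rceil/\lfloor t_{r-1}\rfloor$ is unbounded on $A^+$ even when $\min\big(\lfloor t_1\rfloor,\lfloor t_2-t_1\rfloor,\ldots\big)$ stays bounded (take $t_{r-1}$ with one coordinate of size $T'$ and another of size $T$ with $T'\gg T$), so the loss is not even polynomial in the quantity you are allowed to lose in. This is precisely the obstruction that forces the literature --- \cite{KM2}, \cite{Shi}, \cite{BG2}, and Section \ref{sec:multi} here --- to avoid ever taking a $C^\ell$ or Lipschitz norm of the full translated product: the correct route exploits the $U$-invariance of $\nu$, averages over a one-parameter unipotent direction $\exp(uw)$ whose conjugates $\exp(ue^{\alpha(t_s)}w)$ expand at different rates for different $s$, applies Cauchy--Schwarz to isolate the most-expanded factor, and then uses mixing on the product space together with induction on $r$; only the norms of the individual $\varphi_s$ (translated by a bounded amount $L\|w^{(s)}\|$) ever enter. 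Your second ingredient --- effective multiple mixing for $a(\cdot)$ on $(X,\mu)$ --- is a correct and available input (though it is a theorem in its own right, not a direct consequence of Howe--Moore plus spectral gap), but it cannot be reached through the first step as proposed.
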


Later, Bj\"orklund and Gorodnik \cite{BG2} established a similar extension but with a different error term: 

\begin{theorem}[Bj\"orklund, Gorodnik]\label{th:bg}
	There exist $d_r\ge 1$ and $\delta_r>0$ such that for every $f\in C^\infty (Y)$, 
	$\varphi_1,\ldots,\varphi_r\in C_c^\infty (X),$ and $t_1,\ldots,t_r\in A^+$,
	\begin{align*}
		\int_Y f\left(\prod_{s=1}^r \varphi_s\circ a(t_s)\right)\, d\nu=&\int_Y f\, d\nu
		\prod_{s=1}^r \int_X \varphi_s\, d\mu\\
		&\quad +O\left( e^{-\delta_r \min\big(\lfloor t_1\rfloor, \ldots,\lfloor t_r\rfloor, \Delta(t_1,\ldots,t_r)\big)} \|f\|_W\prod_{s=1}^ r \cN_{\ell, d_r}(\varphi_s)\right),
	\end{align*}
where 
$\cN_{\ell,d}(\varphi):=\max \left(\|\varphi\|_{C^0},\|\varphi\|_{Lip}, \|\varphi\|_{L^{2^d}_\ell}\right)$. Here $\|\cdot\|_W$ refers to the Wiener norm of functions on $Y$, defined by \eqref{eq:wiener_0} below.
\end{theorem}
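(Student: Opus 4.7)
The strategy combines a Fourier expansion of $f$ on the torus $Y$, a Hoeffding-type centering expansion, an iterated Cauchy-Schwarz/van der Corput squaring, and repeated applications of the one-parameter estimate Theorem~\ref{th:km}.

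First, I would Fourier-decompose $f$ on $Y \cong \bR^{m+n}/\bZ^{m+n}$, writing $f = \sum_{\xi \in \bZ^{m+n}} \hat f(\xi)\, e_\xi$ with $e_\xi(B) = e^{2\pi i \langle \xi, B \rangle}$. Since $\|f\|_W$ controls $\sum_\xi |\hat f(\xi)|$, it is enough to prove the bound with $f$ replaced by a single character $e_\xi$, uniformly in $\xi$. Setting $F_s := \varphi_s - \int_X \varphi_s\, d\mu$, the identity $\varphi_s = F_s + \mu(\varphi_s)$ expanded into the product gives
\[
\int_Y e_\xi \prod_{s=1}^r \varphi_s\circ a(t_s)\, d\nu
= \sum_{S \subseteq [r]} \prod_{s \notin S} \mu(\varphi_s) \cdot \int_Y e_\xi \prod_{s \in S} F_s \circ a(t_s)\, d\nu.
\]
The $(S,\xi) = (\emptyset,0)$ term reproduces the main term $\prod_s \mu(\varphi_s)$, while every other contribution is a \emph{centered} correlation $J_S(\xi)$ that must be shown to decay exponentially in $\min(\lfloor t_1\rfloor,\ldots,\lfloor t_r\rfloor,\Delta)$.

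Second, I would apply an iterated Cauchy-Schwarz / van der Corput argument to reduce each $|J_S(\xi)|$ to a two-point correlation. At each step, one bounds $|J_S(\xi)|^2$ by integrating out one variable via Cauchy-Schwarz and pairing the remaining factors as $(F_s\overline{F_s})\circ a(t_s)$, halving the number of factors. Iterating $d \sim \log|S|$ times bounds $|J_S(\xi)|^{2^d}$ by a two-point correlation on the pair $(i,j)\in S$ realizing the minimum pairwise distance, at the cost of replacing $\cN_\ell$-type norms by their $L^{2^d}_\ell$-versions; this is the mechanism responsible for the $\cN_{\ell,d_r}$-norm in the statement.

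Third, the surviving two-point correlation has the form $\int_Y (H\circ a(t_i))\cdot(H'\circ a(t_j))\, d\nu$, with $H,H'$ products of translates of $F_s\overline{F_s}$. Since the diagonal flow commutes with itself, one rewrites this as a one-point equidistribution problem for a smooth measure on $Y$ translated by the diagonal parameter $t_j - t_i$; the hypothesis $\|t_i - t_j\|\ge \Delta$ guarantees some coordinate of $t_j - t_i$ has absolute value at least $\Delta$, providing either an expanding or contracting direction on $Y$. Feeding this into Theorem~\ref{th:km} yields the decay $e^{-\delta'\Delta}$ of the pair contribution. Combining with the per-factor gain $e^{-\delta\lfloor t_s\rfloor}$ arising from replacing $\nu(\varphi_s\circ a(t_s))$ by $\mu(\varphi_s)$ in the Hoeffding expansion produces the required bound with exponent $\min(\lfloor t_1\rfloor,\ldots,\lfloor t_r\rfloor,\Delta)$.

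The main obstacle is the following asymmetry: while each $t_s \in A^+$, the difference $t_j - t_i$ need \emph{not} lie in $A^+$, and in particular $\lfloor t_j - t_i\rfloor$ can be negative even when $\Delta$ is large. Hence Theorem~\ref{th:km} cannot be applied to $a(t_j-t_i)$ as a black box. This is overcome by decomposing $t_j - t_i$ according to the sign of its coordinates, using the commutativity of the diagonal flow to separate expanding and contracting directions, and exploiting the $A^+$-constraint to guarantee the existence of a genuine unstable sub-torus of $Y$ adapted to the relevant direction. A secondary difficulty is the uniform book-keeping of Sobolev norms through the $2^d$-fold pairing, which is precisely why the implicit constant $d_r$ grows with $r$ and why the stronger $L^{2^d}_\ell$-norm supersedes the $L^2_\ell$-norm of Theorem~\ref{th:km}.
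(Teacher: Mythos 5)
Your skeleton (Fourier expansion of $f$ in the Wiener algebra, centering $\varphi_s=F_s+\mu(\varphi_s)$, Cauchy--Schwarz, induction on the number of factors, input from one-parameter mixing) is the right general shape, and the first reduction is fine. But the central mechanism of the actual argument (the method of \cite{BG2}, reproduced in Section \ref{sec:multi} for the generalization) is missing, and the squaring step as you describe it yields no decay. If you bound $|J_S(\xi)|^2$ by Cauchy--Schwarz and ``pair the remaining factors as $(F_s\overline{F_s})\circ a(t_s)$'', you have paired each $F_s$ with itself at the \emph{same} point: the resulting correlation involves $|F_s|^2$, which has nonzero mean, so the squared quantity converges to $\prod_s\mu(|F_s|^2)>0$ and you recover only $|J_S|\ll\prod_s\|F_s\|_{L^2}$. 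The cancellation in the real proof comes from a shift: one first uses the invariance of $\nu$ under a one-parameter unipotent subgroup $\exp(uw)$ with $w=E_{i_0j_0}\in\mathrm{Lie}(U)$, averages over $u\in[0,L]$ as in \eqref{eq:average}, and only then applies Cauchy--Schwarz in the $Y$-variable. The square then becomes a double integral in $(u_1,u_2)$ of a $k$-point correlation of the paired functions $\big(F_s\circ\exp(u_1w^{(s)})\big)\cdot\big(\overline{F_s}\circ\exp(u_2w^{(s)})\big)$ with $w^{(s)}=e^{\alpha(t_s)}w$; the inductive hypothesis (with $k<r$ factors) replaces $\nu$ by the product measure, and the decay is extracted from decorrelation in $|u_1-u_2|\,\|w^{(s)}\|$ via effective mixing on $(X,\mu)$ (Corollary \ref{cor:decor}). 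The quantity $\Delta(t_1,\ldots,t_r)$ enters precisely through the choice of the direction $w$ and the cutoff index $k$ so that the weights $\alpha(t_s)=t_{s,i_0}+t_{s,j_0}$ have a gap of order $\Delta$ between $s\le k$ and $s\ge k+1$, and through the choice of $L$ with $L\|w^{(k)}\|$ large but $L\|w^{(k+1)}\|$ small. None of this appears in your proposal, and without the auxiliary averaging parameter there is nothing to decorrelate against.

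Your endgame also does not work as stated. The two-point correlation $\int_Y(H\circ a(t_i))(H'\circ a(t_j))\,d\nu$ cannot be converted into a one-point problem for $a(t_j-t_i)$ by Theorem \ref{th:km}: the measure $\nu$ is not invariant under the diagonal flow, so the change of variable produces $\int H\cdot(H'\circ a(t_j-t_i))\,d(a(t_i)_*\nu)$, an integral over the translated torus $a(t_i)Y$ rather than over $(Y,\nu)$; pulling the functions back to $Y$ multiplies the relevant $C^\ell$ and Wiener norms by $e^{O(\ell\lceil t_i\rceil)}$, which destroys the estimate whenever $\lceil t_i\rceil$ is comparable to or larger than $\Delta$. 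Compounding this, $t_j-t_i$ need not lie in $A^+$, as you note, and the sign decomposition you sketch does not resolve the first issue. In the paper the role you assign to this two-point reduction is instead played by the exact mixing estimate on $(X,\mu)$ for the unipotent translates (Theorem \ref{th:mixing_I} and Corollary \ref{cor:decor}), applied after the measure has already been replaced by $\mu^{(k)}$ via induction; the quantities $\lfloor t_s\rfloor$ enter separately through the base case Theorem \ref{th:km}, exactly as in your last correct observation.
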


The main result of this paper is:

\begin{theorem}\label{th:new}
	There exist $\ell\ge 1$ and $\eta_r>0$ such that for every 
	$\varphi_1,\ldots,\varphi_r\in C_c^\infty (X),$ and $t_1,\ldots,t_r\in A^+$,
	\begin{align*}
		\int_Y \left(\prod_{s=1}^r \varphi_s\circ a(t_s)\right)\, d\nu=
		\prod_{s=1}^r \int_Y \varphi_s\circ a(t_s)\, d\nu
		+O\left( e^{-\eta_r \Delta(t_1,\ldots,t_r)} \prod_{s=1}^r \|\varphi_s\|_{C^\ell}\right).
	\end{align*}
\end{theorem}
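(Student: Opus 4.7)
The theorem improves on Theorem~\ref{th:bg} in two compatible ways: it replaces the product $\prod_s \int_X \varphi_s \, d\mu$ by the product of $Y$-averages $\prod_s \int_Y \varphi_s \circ a(t_s)\, d\nu$, and correspondingly the error depends only on $\Delta(\bt)$ and no longer on $\min_s \lfloor t_s \rfloor$. When all $\lfloor t_s \rfloor$ are large, the two baselines coincide up to $O(e^{-\delta \lfloor t_s \rfloor})$ by Theorem~\ref{th:km}; the new content is precisely when some $t_s$ stays bounded in some coordinates while the pairwise distances grow. The plan is first to reduce, via centering, to the vanishing of higher-order ``joint cumulants.'' Setting $c_s := \int_Y \varphi_s \circ a(t_s)\, d\nu$ and $\psi_s := \varphi_s \circ a(t_s) - c_s$, the identity $\prod_s(\psi_s + c_s) = \sum_{S \subseteq \{1,\ldots,r\}}\prod_{s \in S} \psi_s \prod_{s \notin S} c_s$ shows that Theorem~\ref{th:new} is equivalent to
\[
\int_Y \prod_{s \in S}\psi_s \, d\nu = O\!\left(e^{-\eta_r \Delta(\bt)} \prod_{s \in S}\|\varphi_s\|_{C^\ell}\right)
\]
for every $S \subseteq \{1, \ldots, r\}$ with $|S| \ge 2$.

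Next, fix a small constant $c \in (0,1)$ and split into two regimes. In the \emph{easy regime} where $\lfloor t_s \rfloor \ge c\, \Delta(\bt)$ for every $s \in S$, one expands $\prod_{s\in S}\psi_s$ as an alternating sum of products $\prod_{s \in T}(\varphi_s \circ a(t_s)) \prod_{s \in S\setminus T}(-c_s)$ over $T \subseteq S$. Theorem~\ref{th:bg} applied with $f \equiv 1$ to each $\int_Y \prod_{s\in T}\varphi_s\circ a(t_s)\,d\nu$, and Theorem~\ref{th:km} applied to each $c_s$, show that all such integrals and constants differ from their respective $X$-normalizations by $O(e^{-\delta c\Delta(\bt)})$. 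The leading $\prod_s \int_X \varphi_s\,d\mu$ contributions cancel via the identity $\sum_{T\subseteq S}(-1)^{|S \setminus T|}=0$ for $|S|\ge 1$, leaving only lower-order $O(e^{-\delta c\Delta(\bt)}\prod_s \|\varphi_s\|_{C^\ell})$ terms, as required.

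The substance lies in the \emph{hard regime} where $\lfloor t_{s_0}\rfloor < c\, \Delta(\bt)$ for some $s_0 \in S$. Here $a(t_{s_0})_*\nu$ has not equidistributed on $X$; instead it concentrates on an intermediate homogeneous subvariety. For each $s$ I would partition $\{1,\ldots,m+n\}$ into $J_s := \{i : t_s^{(i)} \ge c\,\Delta(\bt)\}$ and its complement, determining (via the normalizer of the partial unipotent subgroup expanded by $a(t_s^{J_s})$) an intermediate closed subvariety $Z_s \subseteq X$ on which $a(t_s)_*\nu$ effectively equidistributes at rate $e^{-\delta c\Delta(\bt)}$, using the effective equidistribution toolkit announced in the abstract. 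The crucial combinatorial input is that $\Delta(\bt) \le \|t_s-t_{s'}\|$ forces any two distinct profiles to separate in at least one coordinate by $\ge \Delta(\bt)$, which makes the corresponding intermediate subvarieties $Z_s$ mutually transverse in a quantitative sense. Combining single-factor equidistributions then yields joint effective equidistribution of the diagonal push-forward $y\mapsto (a(t_1)y,\ldots,a(t_r)y)$ of $\nu$ onto $\prod_s Z_s \subseteq X^r$ at rate $e^{-\eta_r \Delta(\bt)}$, which, after integration against $\bigotimes_s \varphi_s$, delivers the theorem.

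The principal obstacle is controlling this joint equidistribution uniformly across the combinatorially many configurations $(J_s)_s$, while keeping the dependence on the $\varphi_s$ multiplicative in the $C^\ell$ norms rather than multiplicative in some larger Sobolev norm that would accumulate through the $r$ factors. I anticipate this is organized by induction on $r$: one peels off an index $s_*$ whose profile is extremal (for instance, maximizing the separation along one particular coordinate) and applies a one-step effective mixing argument along that coordinate direction to decouple the $s_*$-factor from the remaining $r-1$ factors, to which the inductive hypothesis then applies. Careful handling of Sobolev and Wiener norms, along the lines of Theorem~\ref{th:bg}, will likely be needed to stop the norm dependence from inflating through the induction; this, together with the classification of intermediate subvarieties in the hard regime, should constitute the bulk of the technical work.
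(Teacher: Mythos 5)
Your reduction to centered products and your treatment of the ``easy regime'' (all $\lfloor t_s\rfloor\ge c\,\Delta$) are fine: there Theorems \ref{th:bg} and \ref{th:km} plus the alternating-sum cancellation do give the bound. The gap is in the hard regime, and it is twofold. First, the single-threshold partition $J_s=\{i: t_s^{(i)}\ge c\,\Delta\}$ does not work as stated. The only way to exploit an index set $J_s$ is to write $t_s=t_s'+t_s''$ with $t_s'$ supported on $J_s$ and absorb $a(t_s'')$ into the test function, i.e.\ replace $\varphi_s$ by $\varphi_s\circ a(t_s'')$; this inflates the $C^\ell$-norm by a factor $e^{O(\ell)\lceil t_s''\rceil}$, which with your partition can be as large as $e^{O(\ell) c\,\Delta}$ --- the same order as the equidistribution gain $e^{-\delta c\,\Delta}$, and in general $r\ell\gg\delta$, so the error term is not small. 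This is why the paper's Lemma \ref{l:induction} requires \emph{two separated} thresholds ($\lfloor t_s\rfloor_{I_s}\ge\alpha\Delta$ but $\lceil t_s\rceil_{I_s^c}\le\beta\Delta$ with $\beta\le\alpha/\lambda$, $\lambda\gtrsim r\ell/\delta$), and why Section \ref{sec:proof} needs the iterative ``restart'' mechanism that enlarges the index sets whenever some coordinate lands in the intermediate range $(\beta\Delta,\alpha\Delta)$, together with a hierarchy of constants $c_1<\cdots<c_r$. You also do not address the factors $s$ for which \emph{all} coordinates of $t_s$ are small (so $J_s$ is empty or not admissible): there is no subvariety $Z_s$ on which anything equidistributes, and such factors must instead be absorbed into the density on $Y$ with Wiener-norm control, reducing the number of factors --- this is the paper's Case $2_k$.

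Second, the assertion that coordinate-wise separation by $\Delta$ makes the intermediate subvarieties ``quantitatively transverse,'' so that single-factor equidistributions combine into joint equidistribution on $\prod_s Z_s$, is precisely the theorem that needs proving (Theorem \ref{th:multi_gen}), not a step one can cite. Separation in a single coordinate is not the right input: what the argument requires is a weight $\alpha_{ij}(t)=t_i+t_j$ with $(i,j)$ \emph{mixing} for one of the index sets and $\alpha_{ij}(t_{s}-t_{s'})\gtrsim\Delta$, and producing such a weight is a delicate case analysis exploiting the constraint $\sum_{i\le m}t^{(i)}=\sum_{i>m}t^{(i)}$ (Proposition \ref{p:weights}). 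Moreover, even after choosing such a direction $w$, it may fail to be mixing for the \emph{other} index sets $I_s$, in which case no mixing estimate is available along $\exp(uw)$ in $X_{I_s}$ and one must instead prove mean (averaged-in-$u$) decorrelation on the torus fibers via Fourier expansion and Siegel's formula --- the entire content of Section \ref{sec:decor}. Neither ingredient appears in your outline, so the hard regime remains unproved.
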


We again stress that the novelty here is that the error term depends only on the distances 
$\|t_i-t_j\|$ with $i\ne j$, while the previous results are only applicable when 
$\min\big(\lfloor t_1\rfloor, \ldots,\lfloor t_r\rfloor\big)\to \infty$.
We also note that the measures $a(t)_*\nu$, as $t \ra \infty$ in $A^{+}$, can have many 
different limits that need to be analysed in order to prove Theorem \ref{th:new}.
We illustrate this by the following example.

\begin{example}
Let $m=2$ and $n=1$. 
According to Theorem \ref{th:km}, for $\varphi\in C_c(X)$,
$$
\int_Y \varphi\circ a(s,t)\, d\nu \to \int_{X}\varphi\, d\mu
\quad\hbox{as $\min(s,t)\to\infty$.}
$$
However, the family of measures $a(s,t)_*\nu$ also exhibit 
other limiting behaviors: one can show, in particular, that
$$
\int_Y \varphi\circ a(s,0)\, d\nu \to \int_{X_1}\varphi\, d\mu_1\quad\hbox{as $s\to\infty$},
$$
and
$$
\int_Y \varphi\circ a(0,t)\, d\nu \to \int_{X_2}\varphi\, d\mu_2\quad\hbox{as $t\to\infty$},
$$
 where
$$
X_1 :=\left(
\begin{tabular}{ccc}
$*$ & $*$ & $*$ \\	
0 & $*$ & 0 \\	
$*$ & $*$ & $*$
\end{tabular}
\right)\Gamma
\quad\quad\hbox{and}\quad\quad
X_2 :=\left(
\begin{tabular}{ccc}
	$*$ & $*$ & $*$ \\	
	0 & $*$ & $*$ \\	
	0 & $*$ & $*$
\end{tabular}
	\right)\Gamma
$$
are the homogeneous subspaces
of $X$ equipped with invariant probability measures $\mu_1$ and $\mu_2$.
More generally, let us embed
$\nu$ in $X\times X$ diagonally and 
consider 
the family of measures $\big(a(s_1,t_1),a(s_2,t_2\big))_*\nu$ 
with $(s_1,t_1),(s_2,t_2)\in A^+$. Here one has 16 possible 
limiting behaviors. For example, one can show that 
for $\varphi_1,\varphi_2\in C_c(X)$,
\begin{equation}
	\label{eq:ex}
\int_Y \big(\varphi_1\circ a(s,0)\big)\cdot  \big(\varphi_2\circ a(0,t)\big)\, d\nu \to \left(\int_{X_1}\varphi_1\, d\mu_1\right) \left(\int_{X_2}\varphi_2\, d\mu_2\right)\quad \hbox{as $s,t\to\infty$.}
\end{equation}
In order to prove Theorem \ref{th:new},
one needs to analyze all such possibilities.
Ultimately, a general version of \eqref{eq:ex},
which describes various limiting behaviors,
is given by Theorem~\ref{th:multi_gen} below.
\end{example}

\noindent \textbf{Organization of the paper.}
In Section \ref{sec:notation}, we introduce basic notation and, in particular, a family of homogeneous subspaces $X_I$ that will play a crucial
role for the rest of the paper.
In Section \ref{sec:mixing}, we study mixing and equidistribution properties on those spaces.
While for certain one-parameter subgroups a required de-correlation
estimate is given by Corollary \ref{cor:decor}, 
this fails for other one-parameter subgroups. 
This issue in addressed in Section \ref{sec:decor},
where additional decorrelation estimates are established.
 In Section \ref{sec:weight}, we prove an  
elementary result about weights that subsequently will allow us to
run our inductive argument using the decorrelation estimates from  
Sections \ref{sec:mixing}--\ref{sec:decor}.
In Section \ref{sec:multi}, we prove a general multiple equidistribution
result (Theorem \ref{th:multi_gen}) generalizing Theorem \ref{th:bg}.
Finally, Theorem \ref{th:new} is proved in Sections \ref{sec:separated}--\ref{sec:proof} with a help of Theorem \ref{th:multi_gen}.

\section{Basic notation} \label{sec:notation}

Throughout the paper, $X$ denotes the space of unimodular lattice in $\bR^{m+n}$:
$$
X\simeq G/\Gamma,\quad \hbox{with $G:=\hbox{SL}_{m+n}(\bR)$ and $\Gamma:=\hbox{SL}_{m+n}(\bZ)$},
$$
equipped with the unique $G$-invariant probability measure $\mu$.
We consider the orbit
$$
Y:=U\Gamma\subset X
$$
of the group
$$
U:=\left(
\begin{tabular}{cc}
	$I_m$ & $*$ \\
	$0$ & $I_n$
\end{tabular}\right),
$$
which can be identified with $(m\times n)$-dimensional torus.
We equip $Y$ with the normalized Lebesgue measure $\nu$ (which is the unique $U$-invariant probability measure on the $U$-orbit $Y$).


\medskip

Let us now introduce a family of intermediate homogeneous subspaces $X_I$
of $X$ that will play a crucial role in our arguments.
We call a subset $I\subset \{1,\ldots, m+n\}$ \emph{admissible} if
\begin{equation}\label{eq:I}
I\cap \{1,\ldots, m\}\ne \emptyset \quad \hbox{and} \quad 
I\cap \{m+1,\ldots, m+n\}\ne \emptyset.
\end{equation}
Let us first suppose for simplicity that
\begin{equation}\label{eq:i_0}
I= I_0:=\{m-k_1+1,\ldots,m+k_2\}
\end{equation}
for some $k_1,k_2\ge 1$. In this case, we define
$$
G_{I_0}:=
\left(
\begin{tabular}{ccc}
$I_{m-k_1}$ & $*$ & $*$ \\
$0$ & $\hbox{SL}_{k_1+k_2}(\bR)$ & $*$ \\
$0$ & $0$ & $I_{n-k_1}$
\end{tabular}
\right).
$$
We note that $G_{I_0}= S_{I_0}\ltimes U_{I_0}$, where
$$
S_{I_0}:=
\left(
\begin{tabular}{ccc}
	$I_{m-k_1}$ & $0$ & $0$ \\
	$0$ & $\hbox{SL}_{k_1+k_2}(\bR)$ & $0$ \\
	$0$ & $0$ & $I_{m-k_1}$
\end{tabular}
\right)
\quad\hbox{and}\quad
U_{I_0}=
\left(
\begin{tabular}{ccc}
	$I_{m-k_1}$ & $*$ & $*$ \\
	$0$ & $I_{k_1+k_2}$ & $*$ \\
	$0$ & $0$ & $I_{m-k_1}$
\end{tabular}
\right).
$$
For general admissible $I$, we take a permutation $\sigma_I$ such that 
\begin{align*}
\sigma_I (\{1,\ldots, m\}\backslash I) &=\{1, \ldots, m-k_1\},\\
\sigma_I (I\cap \{1,\ldots, m\}) &=\{m-k_1+1,\ldots,m\}, \\
\sigma_I(I\cap \{m+1,\ldots, m+n\}) &=\{m+1,\ldots,m+k_2\},\\
\sigma_I(\{m+1,\ldots, m+n\}\backslash I) &=\{m+k_2+1,\ldots,m+n\}.
\end{align*}
Let $w_I\in G$ be the corresponding permutation matrix and define
$$
G_I:= w_I G_{I_0} w_I^{-1}, \quad S_I:= w_I S_{I_0} w_I^{-1},\quad U_I:= w_I U_{I_0} w_I^{-1},
$$
and $G_I=S_I\ltimes U_I$. The orbits 
$$
X_I := G_I \Gamma \subset X
$$	
are finite volume homogeneous spaces. We denote by $\mu_I$ the unique $G_I$-invariant probability 
measure on $X_I$. For instance, when $I=\{1,\ldots,m+n\}$, we have $X_I=X$.


\medskip

We recall that the space
$$
Y=U\Gamma\simeq \bR^{mn}/\bZ^{mn}
$$
is (canonically) isomorphic to a torus, and under this isomorphism the measure $\nu$ corresponds to the Lebesgue probability measure. Let $\Xi_Y$ denote the set of continuous characters of $Y$ (viewed as a torus).
For $f\in L^1(Y)$, we write
$$
\widehat f(\xi):= \int_Y f\cdot \bar \xi\, d\nu.
$$
The \emph{Wiener norm} of $f$ is defined by
\begin{equation}\label{eq:wiener_0}
\|f\|_W:=\sum_{\xi\in\Xi_Y} |\widehat f(\xi)|.
\end{equation}
When $\|f\|_W<\infty$, one has a uniformly convergent expansion
\begin{equation}\label{eq:wiener}
f=\sum_{\xi\in\Xi_Y} \widehat f(\xi) \xi.
\end{equation}
We recall that for $f\in C^\infty(Y)$,
\begin{equation}
	\label{eq:wiener_n}
\|f\|_{C^0}\le \|f\|_{W}\le \|f\|_{C^1},
\end{equation}
and for $f_1,f_2\in C^\infty(Y)$,
$$
\|f\|_{W}\le \|f_1\|_{W}\cdot \|f_2\|_{W}.
$$
In particular, it follows that for $\phi\in C_c^\infty(X)$,
$$
\big\|\phi|_Y\big\|_W\le \|\phi\|_{C^1}.
$$



\section{Mixing and equidistribution in intermediate homogeneous spaces}
\label{sec:mixing}

Let $I\subset \{1,\ldots, m+n\}$ be an admissible subset. If $s \in G$, we write $\|s\|$ the operator norm of $s$ with respect to the $\ell^\infty$-norm of $\bR^{m+n}$.

\begin{theorem}\label{th:mixing_I}
The action of $S_I$ on $X_I$ is exponentially mixing,
that is, 
there exist $\ell\ge 1$ and $\tau>0$ such that for every 
$\varphi,\psi\in C_c^\infty (X_I)$ and $s\in S_I$,
$$
\int_{X_I} (\varphi\circ s)\cdot \psi\, d\mu_I=\left(\int_{X_I} \varphi\, d\mu_I\right)  	
\left(\int_{X_I} \psi\, d\mu_I\right)+O\Big( \max\big(1,\|s\|\big)^{-\tau}\, \|\varphi\|_{L^2_\ell} \|\psi\|_{L^2_\ell}\Big).
$$
\end{theorem}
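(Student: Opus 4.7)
The plan is to reduce the general statement to the case $I=I_0$ and then apply effective decay of matrix coefficients for the simple subgroup $S_{I_0}\cong\SL_d(\bR)$ acting on the finite-volume homogeneous space $X_{I_0}$, where $d:=k_1+k_2\ge 2$. Since $w_I\in G$ is a fixed permutation matrix, $\|w_I s w_I^{-1}\|\asymp\|s\|$, and the map $g\Gamma\mapsto w_I g\Gamma$ intertwines the $S_{I_0}$-action on $X_{I_0}$ with the $S_I$-action on $X_I$, pushes $\mu_{I_0}$ forward to $\mu_I$, and preserves Sobolev norms up to a multiplicative constant. Hence it suffices to prove the result for $I=I_0$.

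For $I=I_0$, I would identify $X_{I_0}\cong G_{I_0}/\Gamma_{I_0}$ with $\Gamma_{I_0}:=G_{I_0}\cap\Gamma$; this is a finite-volume homogeneous space. By Moore's ergodicity theorem, the action of the simple Lie group $S_{I_0}$ on $X_{I_0}$ is mixing, so $L^2_0(X_{I_0},\mu_{I_0})$ is a unitary $S_{I_0}$-representation with no nonzero invariant vectors. The next step is to analyze this representation by Fourier decomposition along the normal abelian unipotent subgroup $U_{I_0}$. The trivial isotypic component is $L^2_0$ of the quotient $\SL_d(\bR)/\SL_d(\bZ)$, using that the projection $G_{I_0}\to S_{I_0}$ maps $\Gamma_{I_0}$ onto $\SL_d(\bZ)$. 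The orthogonal complement splits as a direct integral over the $S_{I_0}$-orbits of non-trivial characters of the torus $U_{I_0}/(U_{I_0}\cap\Gamma_{I_0})$, with each piece being an induced representation of $S_{I_0}$.

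On the trivial component, the sought decay is the classical exponential mixing of $\SL_d(\bR)$ on $\SL_d(\bR)/\SL_d(\bZ)$ contained in Theorem~\ref{th:km} applied to a smaller space. On each induced piece, $S_{I_0}$ has no invariant vectors, and exponential decay of matrix coefficients with Sobolev norm control follows from Kazhdan's property (T) when $d\ge 3$, and from the Selberg/Kim-Sarnak spectral gap for the relevant family of automorphic representations when $d=2$; in either case, the standard Cowling-type Sobolev interpolation converts $K$-finite decay estimates into decay controlled by $\|\cdot\|_{L^2_\ell}$.

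The main technical obstacle is obtaining these decay estimates \emph{uniformly} across all Fourier components, particularly in the $d=2$ case where property (T) is unavailable and one must rely on uniform automorphic bounds. A conceptually cleaner alternative, which I would pursue if available, is to invoke a general theorem on effective decay of matrix coefficients for semisimple subgroups of linear Lie groups acting on arithmetic quotients (in the spirit of Kleinbock-Margulis), working directly with the $G_{I_0}$-representation $L^2_0(X_{I_0})$ and bypassing the explicit character decomposition.
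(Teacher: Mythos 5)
Your overall strategy is the same as the paper's: reduce to $I=I_0$ by conjugating with $w_I$, split $L^2_0(X_{I_0})$ into the $U_{I_0}$-invariant part (identified with $L^2_0(\SL_d(\bR)/\SL_d(\bZ))$, where classical spectral gap applies) and its orthogonal complement, and get a spectral gap on the complement via property (T) for $d\ge 3$ and via representation theory of $\SL_2(\bR)\ltimes\bR^2$ for $d=2$. However, two steps as written do not go through.

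First, $U_{I_0}$ is \emph{not} abelian in general: it is a two-step nilpotent (Heisenberg-type) group whenever $m-k_1\ge 1$ and $n-k_2\ge 1$, since the commutator of the blocks $\Mat_{m-k_1,d}$ and $\Mat_{d,n-k_2}$ lands in the corner block. Consequently $U_{I_0}/(U_{I_0}\cap\Gamma_{I_0})$ is a nilmanifold, not a torus, and your proposed direct-integral decomposition over its characters does not capture $L^2$; characters of a nonabelian nilpotent group only see the abelianization. The paper sidesteps this by never decomposing along all of $U_{I_0}$: it works with the abelian row subgroups $V_i\subset U^{(1)}_{I_0}$ and column subgroups $W_j\subset U^{(2)}_{I_0}$, each of which generates with $S_{I_0}$ a copy of $\SL_d(\bR)\ltimes\bR^d$, and uses that a vector invariant under $U^{(1)}_{I_0}$ and $U^{(2)}_{I_0}$ is invariant under all of $U_{I_0}$. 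Second, for $d=2$ the appeal to Selberg/Kim--Sarnak is misplaced: the non-trivial components are not automorphic representations of a congruence quotient of $\SL_2$, and no arithmetic input is needed. The correct, and automatically uniform, tool is the purely representation-theoretic fact that any unitary representation of $\SL_2(\bR)\ltimes\bR^2$ without nonzero $\bR^2$-invariant vectors has a spectral gap (indeed is tempered) upon restriction to $\SL_2(\bR)$ (this is \cite[Corollary~1.4.13]{BHV}, applied to each $S_{I_0}V_i$ and $S_{I_0}W_j$); the uniformity across "Fourier components" that you flag as the main obstacle is thus automatic. Your fallback of citing a general effective-decay theorem for semisimple subgroups of arithmetic quotients also does not directly apply, because $X_{I_0}$ is a quotient of the non-reductive group $G_{I_0}$ rather than an arithmetic quotient of a semisimple group, so the spectral gap for the $S_{I_0}$-action still has to be established by hand, which is exactly what the above argument does.
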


\begin{proof}
Without loss of generality, we may assume that $I$ is given as in \eqref{eq:i_0}. In particular, $S_I\simeq \hbox{SL}_{k_1+k_2}(\bR)$.
The exponential mixing property will follow once we show that 
the action of $S_I$ on $X_I$ has the spectral gap property,
namely, that the unitary representation of $S_I$ on $L_0^2(X_I)$,
the space of $L^2$-integrable functions with integral zero,
is isolated from the trivial representation (see, for instance, \cite[Sec.~2.4]{KM1}).
	
We observe that the space of $U_I$-invariant functions 
$L^2(X_I)^{U_I}$ is invariant under $G_I$ and 
$$
L_0^2(X_I)^{U_I}\simeq L_0^2\big(\hbox{SL}_{k_1+k_2}(\bR)/\hbox{SL}_{k_1+k_2}(\bZ)\big)
$$
as $S_I$-spaces. The spectral gap property on the latter space is well-known.
Therefore, it is sufficient to verify the spectral gap property on the orthogonal complement $\mathcal{H}_I$ of $L_0^2(X_I)^{U_I}$.
We observe that 
$$
U_I=U^{(1)}_I U^{(2)}_I U^{(3)}_I,
$$
where 
\begin{align*}
	U^{(1)}_I &:=
	\left(
	\begin{tabular}{ccc}
		$I_{m-k_1}$ & $*$ & $0$ \\
		$0$ & $I_{k_1+k_2}$ & $0$ \\
		$0$ & $0$ & $I_{n-k_2}$
	\end{tabular}
	\right),\\
	U^{(2)}_I &:=
	\left(
	\begin{tabular}{ccc}
		$I_{m-k_1}$ & $0$ & $0$ \\
		$0$ & $I_{k_1+k_2}$ & $*$ \\
		$0$ & $0$ & $I_{n-k_2}$
	\end{tabular}
	\right),\\
	U^{(3)}_I &:=
	\left(
	\begin{tabular}{ccc}
		$I_{m-k_1}$ & $0$ & $*$ \\
		$0$ & $I_{k_1+k_2}$ & $0$ \\
		$0$ & $0$ & $I_{n-k_2}$
	\end{tabular}
	\right).
\end{align*}
We note that $U_I$ is generated by $U^{(1)}_I$ and $U^{(2)}_I$.

Let $V_i$, $i=1\ldots, m-k_1$, denote the subgroups $U^{(1)}_I$
corresponding to the $i$-th row in $\hbox{M}_{m-k_1,k_1+k_2}(\bR)$
and $W_j$, $j=m+k_2+1,m+n$, the subgroup of  
$U^{(2)}_I$
corresponding to the $j$-th column in $\hbox{M}_{k_1+k_2,n-k_2}(\bR)$.
We note that 
$$
S_IV_i\simeq \hbox{SL}_{k_1+k_2}(\bR)\ltimes \bR^{k_1+k_2}\quad\hbox{and}\quad
S_IW_j\simeq \hbox{SL}_{k_1+k_2}(\bR)\ltimes \bR^{k_1+k_2}.
$$

When $k_1+k_2\ge 3$, the group $S_I\simeq \hbox{SL}_{k_1+k_2}(\bR)$
has Kazhdan property (T), so it is sufficient to check that 
$\mathcal{H}_I$ contains no non-zero $S_I$-invariant vectors.
Suppose that such a vector exists. 
We apply Mautner Lemma (see, for instance, \cite[Lemma~1.4.8]{BHV}) to the representations of 
$S_IV_i$ and $S_IW_j$, and deduce that this vector is also invariant under
all $V_i$'s and $W_j$'s. 
Hence, it is invariant under $U^{(1)}_I$ and $U^{(2)}_I$,
so that it is $U_I$-invariant and hence is zero
because $\mathcal{H}_I$ is the orthogonal complement of $L_0^2(X_I)^{U_I}$.
This verifies that $\mathcal{H}_I$ contains no non-zero $S_I$-invariant vectors and implies the spectral gap property.

Now suppose that $k_1+k_2=2$, that is, $S_I\simeq \hbox{SL}_2(\bR)$.
In this case, we use that 
for any unitary representation of $\hbox{SL}_{2}(\bR)\ltimes \bR^{2}$
without $\bR^2$-invariant vectors, its restriction to  
$\hbox{SL}_{2}(\bR)$ has the spectral gap property (see \cite[Corollary~1.4.13]{BHV}).
In particular, it follows that the action of $S_I$ on 
$(\mathcal{H}^{V_i})^\perp$ and on $(\mathcal{H}^{W_j})^\perp$
has the spectral gap property, so that the action of $S_I$
on
$$
\mathcal{H}^0_I:=\big(\oplus_i  (\mathcal{H}_I^{V_i})^\perp\big)\oplus \big(\oplus_j  (\mathcal{H}_I^{W_j})^\perp\big)
$$
has the spectral gap property. Moreover,
$$
(\mathcal{H}^0_I)^\perp =\big(\cap_{i}  \mathcal{H}_I^{V_i}\big)\cap \big(\cap_j \mathcal{H}_I^{W_j}\big)= \mathcal{H}_I^{U_I^{(1)}}\cap \mathcal{H}_I^{U_I^{(2)}}
=\mathcal{H}_I^{U_I}=0.
$$
This completes the proof.
\end{proof}

Using the above results, and following the proof in \cite[Lemma~4.3]{BG2}, we get the following corollary:

\begin{corollary}\label{cor:decor}
	There exist $\ell\ge 1$ and  $\tau>0$ such that 
	for every $\phi,\psi\in C_c^\infty (X_I)$, $u_1,u_2\in\bR$, and $w\in \hbox{\rm Lie}(U\cap S_I)$,
	\begin{align*}
		\int_{X_I} (\phi \circ \exp(u_1 w))\cdot  (\psi \circ \exp(u_2 w))\, d\mu_I = &\left(\int_{X_I}\phi\, d\mu_I\right) \left(\int_{X_I}\psi\, d\mu_I\right)\\ 
		&+ O\left(  \max\big(1,|u_1-u_2|\|w\|\big)^{-\tau} \|\phi\|_{L^2_\ell}\|\psi\|_{L^2_\ell}\right).
	\end{align*}
\end{corollary}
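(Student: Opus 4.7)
My plan is to reduce the two-point correlation integral to a one-point integral against a translate by using the invariance of $\mu_I$ under $S_I$, and then invoke Theorem \ref{th:mixing_I}. Concretely, I first observe that $U \cap S_I$ is an abelian unipotent subgroup of $S_I$: in the notation of Section \ref{sec:notation}, after conjugating by $w_I$ one checks directly that $U \cap S_I$ consists of matrices of the form $I + N$, where $N$ lies in the $k_1 \times k_2$ upper-right block inside the $SL_{k_1+k_2}$ slot and vanishes elsewhere. In particular any $w \in \operatorname{Lie}(U\cap S_I)$ satisfies $w^2 = 0$, so $\exp(tw) = I + tw$ for all $t \in \bR$ and $\exp(u_1 w) \in S_I$.

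Next, using that $\mu_I$ is $S_I$-invariant, I would substitute $x \mapsto \exp(-u_1 w)x$ in the integral to rewrite
\[
\int_{X_I}(\phi\circ \exp(u_1 w))\cdot(\psi\circ \exp(u_2 w))\,d\mu_I = \int_{X_I} \phi \cdot\bigl(\psi \circ \exp((u_2-u_1)w)\bigr)\,d\mu_I.
\]
Now I can apply Theorem \ref{th:mixing_I} with $s := \exp((u_2-u_1)w) = I + (u_2-u_1)w \in S_I$, obtaining that the right-hand side equals $\bigl(\int \phi\,d\mu_I\bigr)\bigl(\int\psi\,d\mu_I\bigr)$ up to an error of size $O\bigl(\max(1,\|s\|)^{-\tau}\|\phi\|_{L^2_\ell}\|\psi\|_{L^2_\ell}\bigr)$.

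Finally, I need to convert the intrinsic bound $\max(1,\|s\|)^{-\tau}$ into the stated bound $\max(1,|u_1-u_2|\|w\|)^{-\tau}$. Since $w$ is strictly block upper-triangular, its diagonal entries vanish. Taking the row of $w$ realizing the maximum $\ell^\infty$-operator norm, the corresponding row of $I + (u_2-u_1)w$ has $\ell^1$-sum equal to $1 + |u_2-u_1|\|w\|$, which gives $\|s\| \geq 1 + |u_1-u_2|\|w\| \geq \max(1,|u_1-u_2|\|w\|)$. Plugging this into the error term from Theorem \ref{th:mixing_I} and noting that $\|w\|$ does not appear in the Sobolev norms of $\phi, \psi$ produces the conclusion.

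The argument is essentially routine once one has Theorem \ref{th:mixing_I}; the only step requiring a moment's care is the lower bound on $\|\exp((u_2-u_1)w)\|$, and I expect this to be the only place where the hypothesis $w\in \operatorname{Lie}(U\cap S_I)$ is genuinely used (as opposed to, say, $w\in \operatorname{Lie}(S_I)$), since abelianness of $U\cap S_I$ is what makes $\exp(tw)$ linear in $t$ and gives a clean comparison between the operator norm and $|t|\|w\|$.
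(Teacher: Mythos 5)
Your proposal is correct and is essentially the argument the paper intends: the paper proves this corollary only by pointing to Theorem \ref{th:mixing_I} and the proof of \cite[Lemma~4.3]{BG2}, and the steps you supply (noting $w^2=0$ so $\exp(tw)=I+tw$, using $S_I$-invariance of $\mu_I$ to reduce to a single translate by $\exp((u_2-u_1)w)$, applying the mixing theorem, and bounding $\|\exp((u_2-u_1)w)\|\ge \max(1,|u_1-u_2|\|w\|)$ via the row-sum form of the $\ell^\infty$ operator norm) are exactly the intended ones. The only cosmetic quibble is that linearity of $\exp(tw)$ in $t$ comes from the nilpotency $w^2=0$ rather than from abelianness of $U\cap S_I$ per se, but you state and use the correct fact.
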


\subsection*{General version of Theorem \ref{th:km}}

Given an admissible $I\subset \{1,\ldots, m+n\}$, we set
\[
A^{+}_I = \big\{ t \in \bR^d_{+} \,  : \,  t_j = 0,  \enskip \textrm{for all $j \notin I$}\big\}.
\]
For $I \subset \{1,\ldots,m+n\}$ and $t\in A^+$, we define 
\[
\lfloor t \rfloor_I := \min_{i \in I} t_i.
\]

The following theorem is a straightforward generalization of Theorem \ref{th:km}.

\begin{theorem}\label{th:km_new}
	Let $I\subset \{1,\ldots, m+n\}$ be admissible subset.
	Then
	there exist $\ell\ge 1$ and $\delta>0$ such that for every $f\in C^\infty (Y)$, 
	$\varphi\in C_c^\infty (X_I)$, and $t\in A_I^+$,
	$$
	\int_Y f\cdot(\varphi\circ a(t))\, d\nu=\left(\int_Y f\, d\nu\right)  	
	\left(\int_{X_I} \varphi\, d\mu_I\right)+O\Big( e^{-\delta \lfloor t\rfloor_I} \|f\|_{C^\ell} \mathcal{N}_\ell(\varphi)\Big),
	$$
where $\mathcal{N}_\ell(\varphi):=\max\big(\|\varphi\|_{C^0},\|\varphi\|_{Lip}, \|\varphi\|_{L^2_\ell} \big).$	
\end{theorem}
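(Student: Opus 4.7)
The plan is to adapt the Kleinbock--Margulis proof of Theorem~\ref{th:km} to the intermediate homogeneous space $X_I$, working inside $X_I$ and invoking the exponential mixing from Theorem~\ref{th:mixing_I} in place of mixing on $X$. After conjugating by $w_I$, I may assume $I=I_0$, so that $G_I = S_{I_0}\ltimes U_{I_0}$ with Levi $S_{I_0}\cong\SL_{k_1+k_2}(\bR)$. Admissibility of $I$ ensures $U\subset G_{I_0}$ and $a(t)\in S_{I_0}$ for $t\in A_I^+$, so $Y\subset X_I$ and $a(t)y$ lies in $X_I$ for every $y\in Y$. The first step is to fix a linear complement $V$ of $\Lie(U)$ inside $\Lie(G_{I_0})$; decomposing $\Lie(U)=B_1\oplus B_2\oplus B_3\oplus B_4$ with $B_3\subset \Lie(S_{I_0})$ and $B_1\oplus B_2\oplus B_4\subset \Lie(U_{I_0})$, a natural choice is $V=V_S\oplus V_U$, where $V_S$ is the complement of $B_3$ inside $\Lie(S_{I_0})$ (the lower-triangular, diagonal, and within-block entries of the Levi) and $V_U$ is the complement of $B_1\oplus B_2\oplus B_4$ inside $\Lie(U_{I_0})$.

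For $0<\epsilon\le 1$, I would fix a smooth non-negative bump $\rho_\epsilon$ on $V$ of unit mass supported in the $\epsilon$-ball around $0$, with Sobolev estimate $\|\rho_\epsilon\|_{L^2_\ell}\lesssim \epsilon^{-d_0}$ for a constant $d_0=d_0(\dim V,\ell)$, and form the thickened measure
\[
\tilde{\nu}_{f,\epsilon}(\phi) := \int_V\int_Y \phi(\exp(v)\cdot y)\, f(y)\, \rho_\epsilon(v)\, d\nu(y)\, dv.
\]
By transversality of $V$ and $\Lie(U)$, this measure is absolutely continuous with respect to $\mu_I$, with density $\psi_{f,\epsilon}$ satisfying $\int \psi_{f,\epsilon}\, d\mu_I = \int_Y f\, d\nu$ and $\|\psi_{f,\epsilon}\|_{L^2_\ell}\lesssim \epsilon^{-d_0}\,\|f\|_{C^\ell}$. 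Applying Theorem~\ref{th:mixing_I} with $s=a(t)$ then yields
\[
\int_{X_I} \varphi\circ a(t)\, d\tilde{\nu}_{f,\epsilon} = \Big(\int_Y f\, d\nu\Big)\Big(\int_{X_I}\varphi\, d\mu_I\Big) + O\Big(\|a(t)\|^{-\tau}\,\epsilon^{-d_0}\,\|f\|_{C^\ell}\,\|\varphi\|_{L^2_\ell}\Big).
\]
The trace-zero equation defining $A^+$ combined with the admissibility of $I$ forces $\|a(t)\|\ge e^{\lfloor t\rfloor_I/M}$ for a constant $M=\max(m,n)$ (via the lower bound $|S_1|\max_{S_1}t^{(i)}\ge |S_2|\lfloor t\rfloor_I$ and its twin), so this mixing error is at most $e^{-\tau\lfloor t\rfloor_I/M}\,\epsilon^{-d_0}\,\|f\|_{C^\ell}\,\|\varphi\|_{L^2_\ell}$.

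Finally, I would estimate the thickening error via the mean value inequality,
\[
\Big|\int_Y f\,(\varphi\circ a(t))\, d\nu - \int_{X_I}\varphi\circ a(t)\, d\tilde{\nu}_{f,\epsilon}\Big|\lesssim \|f\|_{C^0}\,\|\varphi\|_{\Lip}\cdot \sup_{\|v\|\le\epsilon}\|\Ad(a(t))\,v\|,
\]
and then choose $\epsilon = \|a(t)\|^{-\beta}$ for a small $\beta>0$ to balance the two error sources. The main technical obstacle is controlling $\sup_{\|v\|\le\epsilon}\|\Ad(a(t))v\|$ uniformly over $t\in A_I^+$: the subspace $V_U$ and the lower-left $k_2\times k_1$ corner of $V_S$ are strictly contracted by $\Ad(a(t))$ with rate $\le e^{-2\lfloor t\rfloor_I}$, but the within-block strictly-triangular entries of $V_S$ (those lying in the $k_1\times k_1$ and $k_2\times k_2$ diagonal sub-blocks of $\SL_{k_1+k_2}$) have weights of the form $\pm(t^{(i)}-t^{(j)})$ that may grow like $\log\|a(t)\|$, contributing an extra factor $\|a(t)\|^C$ to the thickening bound for a geometric constant $C$. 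Since both the mixing and thickening estimates are polynomial in $\|a(t)\|$, refining the construction slightly (for instance, by rescaling $\rho_\epsilon$ anisotropically so that it is compressed by $\|a(t)\|^{-C}$ along the expanding eigendirections of $\Ad(a(t))$) and taking the Sobolev order $\ell$ sufficiently large, one obtains an overall decay $O(\|a(t)\|^{-\delta'}\,\|f\|_{C^\ell}\,\mathcal{N}_\ell(\varphi))$. Applying the lower bound $\|a(t)\|\ge e^{\lfloor t\rfloor_I/M}$ one last time converts this into the asserted $O(e^{-\delta\lfloor t\rfloor_I}\,\|f\|_{C^\ell}\,\mathcal{N}_\ell(\varphi))$.
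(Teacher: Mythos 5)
There is a genuine gap at the balancing step, and it is exactly the difficulty that the Kleinbock--Margulis argument of \cite{KM2}, which the paper's proof follows, was designed to overcome. Your one-step ``thicken and mix'' scheme needs the transversal complement $V$ to be (essentially) non-expanded by $\Ad(a(t))$, but, as you yourself observe, $V_S$ contains the within-block root vectors $E_{ij}$ with $i\ne j$ both in $I\cap\{1,\ldots,m\}$ (or both in $I\cap\{m+1,\ldots,m+n\}$), whose expansion factors $e^{\pm(t^{(i)}-t^{(j)})}$ are a fixed positive power of $\|a(t)\|$ and are completely uncontrolled by $\lfloor t\rfloor_I$: for $m=n=2$, $I=\{1,2,3,4\}$ and $t=(T^2,T,T^2,T)$ one has $\lfloor t\rfloor_I=T$ while $\Ad(a(t))$ expands $E_{12}$ by $e^{T^2-T}$. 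The thickening error then forces $\epsilon\ll\|a(t)\|^{-C}$ for a fixed geometric constant $C\ge1$, so the mixing error is at least $\|a(t)\|^{-\tau+Cd_0}$, which does not decay: the exponent $\tau$ supplied by Theorem \ref{th:mixing_I} is a fixed (small) constant with no reason to exceed $Cd_0$. Your proposed repairs do not help. Compressing $\rho_\epsilon$ by $\|a(t)\|^{-C}$ along an expanded direction multiplies $\|\psi_{f,\epsilon}\|_{L^2_\ell}$ by at least $\|a(t)\|^{C(\ell+1/2)}$ for that direction, so the mixing error becomes $\|a(t)\|^{-\tau+C(\ell+1/2)+\cdots}$; and increasing the Sobolev order $\ell$ only enlarges this loss, since $\tau$ does not grow with $\ell$. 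In short, a fixed polynomial mixing rate cannot absorb a fixed polynomial transversal expansion, no matter how the bump function is shaped.

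The paper's proof circumvents this with the two-step structure of \cite{KM2}: one first proves effective equidistribution of the leaves $g_tUx$ for the isotropic one-parameter subsemigroup $g_t$ (equal exponents $t/k_1$ on $I'$ and $-t/k_2$ on $I''$), for which the chosen complement $L$ of $\Lie(U_I)$ \emph{is} non-expanding because all within-block weights vanish; only there is the thickening argument run. A general $a(t)$ with $t\in A_I^+$ is then handled by factoring off $g_s$ with $s\asymp\lfloor t\rfloor_I$ and applying the leafwise statement to the translated tori $a(t')Ux$, which requires the quantitative non-divergence estimate on the base $S_I/(S_I\cap\Gamma)$ and the injectivity-radius control on the compact sets $K_\epsilon$. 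These ingredients are absent from your proposal but cannot be dispensed with; your argument is complete only in the special case $k_1=k_2=1$, where $V$ contains no within-block directions.
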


\begin{proof}
The proof can be completed as in \cite{KM2} (explicit dependence 
of the functions in this argument was worked out in \cite[Sec.~2]{BG1}).
Without loss of generality, we may assume that $I$ is given as in \eqref{eq:i_0}.
We also note that the space $X_I$ is a bundle
$$
\pi_I:X_I\to S_I/(S_I\cap \Gamma)
$$
with the base
$$
S_I/(S_I\cap \Gamma)\simeq \hbox{SL}_{k_1+k_2}(\bR)/\hbox{SL}_{k_1+k_2}(\bZ)
$$
and compact fibers isomorphic to $U_I/s(U_I\cap \Gamma)s^{-1}$, $s\in S_I$.
We view the base as the space of unimodular lattices in $\bR^{k_1+k_2}$
and define an exhaustion of $X_I$ by compact subsets 
$$
K_\eps:=\{x\in X_I:\, \|v\|\ge \eps \quad\hbox{for all non-zero $v\in \pi_I(x)$} \}.
$$	
Then one has the non-divergence estimate as \cite[Corollary 3.4]{KM2} for the translates $a(t)Ux$ with $t\in A^+_I$ when  $\lfloor t\rfloor_I$
is sufficiently large. It is also clear that for $x\in K_\eps$,
the injectivity radius $\iota(x)$ of $x$ can be estimated as in 
\cite[Proposition 3.5]{KM2}.
Let us also consider a one-parameter sub-semigroup of $A^+_I$
$$
g_t:=\hbox{diag}\big(1,\ldots,1, e^{t/k_1},\ldots,e^{t/k_1}, e^{-t/k_2},\ldots,e^{-t/k_2}, 1,\ldots,1\big).
$$
We observe that 
$$
\hbox{Lie}(G_I)=L\oplus\hbox{Lie}(U_I),
$$ 
where $L$ is the subspace spanned by matrices
\begin{align*}
	&E_{ij},\quad 1\le i\le m+k_2,\; m-k_1+1\le j\le m,\\
	&E_{ij},\quad m+1\le i\le m+k_2,\; m+1\le j\le m+n.
\end{align*}
One easily verifies that the adjoint action $\hbox{Ad}(g_t)X=g_t X g_t^{-1}$
is non-expanding on $L$. Moreover, according to Theorem \ref{th:mixing_I},
there exist $\ell\ge 1$ and $\delta'>0$ such that for every 
$\varphi,\psi\in C_c^\infty (X_I),$ and $t\ge 0$,
$$
\int_{X_I} (\varphi\circ g_t)\cdot \psi\, d\mu_I=\left(\int_{X_I} \varphi\, d\mu_I\right)  	
\left(\int_{X_I} \psi\, d\mu_I\right)+O\Big( e^{-\delta' t}\, \|\varphi\|_{L^2_\ell} \|\psi\|_{L^2_\ell}\Big).
$$
Now we can apply the argument as in \cite[Theorem 2.3]{KM2} to derive quantitative equidistribution of the leaves
$g_tUx$: 
there exists $c,\gamma>0$ such that for every $F\in C_c^\infty (U_I)$
with $\hbox{dist}(\supp(F),e)\le \rho$, $x\in X_I$ with $\iota(x)>2\rho$,
$\varphi\in C_c^\infty (X_I),$ and $t\ge 0$,
\begin{align*}
	\int_{U_I} F(u) \varphi(g_t ux)\, du=&\left(\int_{U_I} F(u)\, du\right)  	
	\left(\int_{X_I} \varphi\, d\mu_I\right)\\
	&\quad\quad\quad+O\Big(
	\rho\|F\|_{L^1}\|\varphi\|_{Lip}+ \rho^{-c} e^{-\gamma t} \|F\|_{C^\ell} \|\varphi\|_{L^2_\ell}\Big).
\end{align*}
With those ingredients the proof can be completed as in
\cite{KM2}.
\end{proof}

\section{Decorrelation estimates} \label{sec:decor}

One of the challenges of our present argument is that 
the decorrelation estimate in Corollary \ref{cor:decor} only holds for one-parameter 
subgroups in directions $w\in\hbox{Lie}(U\cap S_I)$.
Nonetheless, we establish that certain "partial" decorrelation estimates 
also hold for other directions.
A basic prototype of this phenomenon is the following elementary result  
for circle rotations:

\begin{lemma}\label{lem:eq_part}
	For every $\phi,\psi\in L^2(\mathbb{R}/\mathbb{Z})$ and $L\ge 1$,
	\begin{align*}
	\frac{1}{L^2}\int_0^L \int_0^L \int_{\mathbb{R}/\mathbb{Z}} \phi(x+s_1) \overline{\psi(x+s_2)}\,dx \, ds_1ds_2=&
	\left(\int_{\mathbb{R}/\mathbb{Z}} \phi(x)\, dx\right)  \left(\int_{\mathbb{R}/\mathbb{Z}} \overline{\psi(x)}\, dx\right)\\
	 &\quad\quad +O\Big(L^{-2} \|\phi\|_{L^2}\|\psi\|_{L^2}\Big).
	\end{align*}
\end{lemma}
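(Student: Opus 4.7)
The plan is to prove this by a direct Fourier-analytic computation. Expand $\phi$ and $\psi$ in Fourier series on $\bR/\bZ$,
$$
\phi(x) = \sum_{n\in\bZ} \widehat{\phi}(n)\, e^{2\pi i n x}, \qquad \psi(x) = \sum_{n\in\bZ} \widehat{\psi}(n)\, e^{2\pi i n x},
$$
and substitute into the triple integral. Multiplying out and integrating over $x\in \bR/\bZ$ first, orthogonality of characters kills all off-diagonal terms, so that
$$
\int_{\bR/\bZ} \phi(x+s_1)\overline{\psi(x+s_2)}\, dx = \sum_{n\in\bZ} \widehat{\phi}(n)\overline{\widehat{\psi}(n)}\, e^{2\pi i n(s_1 - s_2)}.
$$

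Next, I would average the resulting expression over $(s_1,s_2)\in [0,L]^2$. Since the $(s_1,s_2)$-integral factors as a product, one obtains
$$
\frac{1}{L^2}\int_0^L\!\!\int_0^L e^{2\pi i n(s_1-s_2)}\, ds_1\, ds_2 = \left|\frac{1}{L}\int_0^L e^{2\pi i n s}\, ds\right|^2 =: c_n(L).
$$
The $n=0$ term gives exactly $\widehat{\phi}(0)\overline{\widehat{\psi}(0)} = \bigl(\int \phi\bigr)\bigl(\int \overline{\psi}\bigr)$, which is the main term. For $n\ne 0$, a direct computation yields $c_n(L) \le \tfrac{1}{\pi^2 n^2 L^2}$, and in particular $c_n(L) \le \tfrac{1}{\pi^2 L^2}$ uniformly in $n$.

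Putting this together, the error term is
$$
\sum_{n\ne 0} \widehat{\phi}(n)\overline{\widehat{\psi}(n)}\, c_n(L),
$$
whose absolute value is at most $\tfrac{1}{\pi^2 L^2}\sum_{n\ne 0}|\widehat{\phi}(n)||\widehat{\psi}(n)|$. An application of the Cauchy--Schwarz inequality combined with Parseval's identity gives $\sum_n |\widehat{\phi}(n)||\widehat{\psi}(n)| \le \|\phi\|_{L^2}\|\psi\|_{L^2}$, which produces the claimed bound $O(L^{-2}\|\phi\|_{L^2}\|\psi\|_{L^2})$. There is no real obstacle here: the whole argument reduces to orthogonality on the torus plus the elementary estimate $|e^{2\pi i n L}-1|\le 2$, and the decay $L^{-2}$ arises precisely because the square of the Dirichlet-type average $\tfrac{1}{L}\int_0^L e^{2\pi i n s}\, ds$ appears, rather than a single copy.
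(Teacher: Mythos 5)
Your proposal is correct and follows essentially the same route as the paper: Fourier expansion, orthogonality in $x$ to reduce to a single sum over frequencies, the elementary bound on $\frac{1}{L^2}\int_0^L\int_0^L e^{2\pi i n(s_1-s_2)}\,ds_1\,ds_2$ for $n\ne 0$, and Cauchy--Schwarz with Parseval to control the tail. The only cosmetic difference is that you use the uniform bound $c_n(L)\le \pi^{-2}L^{-2}$ while the paper records the sharper $\ll n^{-2}L^{-2}$; both suffice.
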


\begin{proof}
	We have the Fourier expansions:
	$$
	\phi(x)= \sum_{n\in\bZ} \widehat \phi(n) e^{2\pi i n x}\quad\hbox{and}\quad \psi(x)= \sum_{n\in\bZ} \widehat \psi(n) e^{2\pi i n x},
	$$
	so that
	$$
	\int_{\bR/\bZ} \phi(x+s_1)\overline{\psi(x+s_2)}\, dx
	=\sum_{n\in \bZ} \widehat \phi(n) \overline{\widehat \psi(n)} e^{2\pi i(s_1-s_2)n}.
	$$
	Since for $n\ne 0$,
	$$
	\frac{1}{L^2} \int_0^L \int_0^L e^{2\pi i(s_1-s_2)n}\, ds_1ds_2\ll n^{-2} L^{-2},
	$$
	it follows that from the Plancherel Formula that
	\begin{align*}
		\frac{1}{L^2}\int_0^L \int_0^L \int_{\mathbb{R}/\mathbb{Z}} \phi(x+s_1) \overline{\psi(x+s_2)}\,dx \, ds_1ds_2=&
\widehat \phi(0) \overline{\widehat \psi(0)}+O(L^{-2})\|\phi\|_{L^2}\|\psi\|_{L^2},		
	\end{align*}
which proves the lemma.
\end{proof}

\subsection{Mean decorrelation for flows - special case}

We will need a version of Lemma \ref{lem:eq_part}
for flows on the space of affine lattices.
Let
\begin{equation}\label{eq:space0}
\cL:=H/\Delta\quad\quad\hbox{with $H:=\hbox{SL}_{d}(\bR)\ltimes\bR^{d}$
and $\Delta:=\hbox{SL}_{d}(\bZ)\ltimes\bZ^{d}$}
\end{equation}
with $d\ge 2$.
We observe that $\cL$ can be identified with the space of affine unimodular lattices in $\mathbb{R}^{d}$:
$$
\cL\simeq \big\{\Lambda+x:\, \hbox{$\Lambda$ -- unimodular lattice in $\mathbb{R}^{d}$},\; x\in \mathbb{R}^d/\Lambda \big\}.
$$
We denote by 
$$
\cL^\star:=\hbox{SL}_{d}(\bR)/\hbox{SL}_{d}(\bZ)
$$
the space of unimodular lattices, and we have the natural projection map
$$
\pi:\cL\to \cL^\star: \Lambda+x\mapsto \Lambda,
$$
whose fibers are tori $\pi^{-1}(\Lambda)\simeq \mathbb{R}^d/\Lambda$.
For $z\in \cL^\star$, we denote $\tau_z$ the probability invariant measure 
supported on the torus $\pi^{-1}(z)$. Then the probability invariant measure
on $\cL$ can be written as 
$$
\mu=\int_{\cL^\star} \tau_z\, d\mu^\star(z),
$$
where $\mu^\star$ denotes the invariant probability measure on $\cL^\star$. For $\phi\in C_c(\cL)$, we define $\phi^\star\in C_c(\cL^\star)$ by
$$
\phi^\star(\Lambda):=\int_{\bR^d/\Lambda} \phi(y+\Lambda)\, d\tau_\Lambda(y+\Lambda).
$$
We think of $\phi^\star$ as a function on $\cL$ which is constant on the fibers. Finally, given a non-zero $w \in \bR^d$, we get a one-parameter flow $\sigma_w : \bR \ra H$ by
\[
\sigma_w(u) = (0,uw) \in H, \quad u \in \bR.
\]
\begin{theorem}\label{th:eq_part_11}
For every $w \in \bR^d \setminus \{0\}$, and for all $\phi,\psi\in C_c^\infty(\cL)$ and $L>0$,
\begin{align*}
\frac{1}{L^2}\int_0^L \int_0^L \int_{\cL}  (\phi\circ \sigma_w(u_1))\cdot & (\bar\psi\circ \sigma_w (u_2))\,d\mu \,  du_1du_2\\
=&
\int_{\cL} (\phi^\star\cdot {\bar\psi^\star})\, d\mu
+O\Big(\max\big(1,L\|w\|\big)^{-2/3} \|\phi\|_{C^\ell} \|\psi\|_{C^\ell}\Big)
\end{align*}
for sufficiently large $\ell$.
\end{theorem}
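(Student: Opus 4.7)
The key structural fact I would exploit is that $\sigma_w(u) = (0, uw)$ acts trivially on the base $\cL^\star$ and by translation by $uw$ on each torus fiber $\pi^{-1}(\Lambda) \simeq \bR^d/\Lambda$. The plan is therefore to disintegrate the integral over $\cL$ along $\pi$ and apply a fiberwise Fourier expansion with respect to the dual lattice $\Lambda^*$. Writing $\phi_\xi(\Lambda) := \int_{\bR^d/\Lambda}\phi(\Lambda+y)e^{-2\pi i \langle \xi, y\rangle}\,d\tau_\Lambda(y)$ and similarly for $\psi$, one obtains
\[
\int_\cL (\phi \circ \sigma_w(u_1))(\bar\psi \circ \sigma_w(u_2))\,d\mu = \int_{\cL^\star} \sum_{\xi \in \Lambda^*} \phi_\xi(\Lambda)\overline{\psi_\xi(\Lambda)}\,e^{2\pi i(u_1 - u_2)\langle \xi, w\rangle}\, d\mu^\star(\Lambda).
\]
Averaging over $u_1, u_2 \in [0, L]$ replaces the exponential by the Fej\'er-type kernel $K_L(c) := L^{-2}\bigl|\int_0^L e^{2\pi i u c}\,du\bigr|^2$, which satisfies $K_L(0)=1$ and $|K_L(c)|\leq \min(1,(\pi L|c|)^{-2})$. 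The $\xi=0$ contribution is $\int_{\cL^\star}\phi^\star\overline{\psi^\star}\,d\mu^\star = \int_\cL \phi^\star\bar\psi^\star\,d\mu$, exactly the asserted main term, so the remaining task is to bound the error from $\xi \neq 0$.

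To handle the error, I would fix a threshold $T > 0$ and split $\Lambda^*\setminus\{0\}$ into a non-resonant part $\{|\langle \xi, w\rangle| \geq T\}$ and a resonant part $\{|\langle \xi, w\rangle| < T\}$. For the non-resonant part, using $|K_L| \leq (\pi LT)^{-2}$, Cauchy--Schwarz in $\xi$, and Parseval on each fiber yield a contribution bounded by $O((LT)^{-2}\|\phi\|_{L^2}\|\psi\|_{L^2})$. For the resonant part, integration by parts on the fiber gives $|\phi_\xi(\Lambda)| \ll_\ell \|\phi\|_{C^\ell}(1+\|\xi\|)^{-\ell}$ uniformly in $\Lambda$ lying in the (compact) $\pi$-image of $\supp(\phi)$. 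I would then invoke Siegel's mean value formula, together with the fact that dualization $\Lambda \mapsto \Lambda^*$ preserves $\mu^\star$, to compute
\[
\int_{\cL^\star}\sum_{\substack{\xi \in \Lambda^*\setminus\{0\}\\ |\langle \xi, w\rangle| < T}} (1+\|\xi\|)^{-2\ell}\, d\mu^\star(\Lambda) = \int_{\{\xi\in\bR^d:\,|\langle \xi, w\rangle| < T\}}(1+\|\xi\|)^{-2\ell}\, d\xi \ll \frac{T}{\|w\|}
\]
for $\ell$ larger than $d/2$, since a change of variables aligning $w$ with one axis reduces the right-hand side to a transverse integral of width $O(T/\|w\|)$. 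Thus the resonant contribution is $O((T/\|w\|)\|\phi\|_{C^\ell}\|\psi\|_{C^\ell})$.

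Choosing $T = (\|w\|/L^2)^{1/3}$ balances the two estimates and yields the asserted error $O((L\|w\|)^{-2/3}\|\phi\|_{C^\ell}\|\psi\|_{C^\ell})$, while the regime $L\|w\| \leq 1$ is covered by the trivial $C^0$ bound. The main technical obstacle I anticipate is the uniformity of the Fourier-decay estimate across all fibers: one must express the directional derivatives along each torus fiber in terms of the translation-invariant vector fields coming from the $\bR^d$-factor of $H$, so that the $C^\ell$-norm of $\phi$ on $\cL$ controls the $C^\ell$-norm of every restriction $\phi|_{\pi^{-1}(\Lambda)}$ uniformly in $\Lambda$. The appeal to Siegel's formula in its dual-lattice form, although standard, is the single point where the specific semidirect-product structure $H = \SL_d(\bR) \ltimes \bR^d$ enters decisively; the exponent $2/3$ in the error is an artifact of optimizing between the polynomial loss on the transverse slice and the dispersive gain from the kernel $K_L$, so I would not expect it to be improvable by this method.
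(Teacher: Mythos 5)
Your proposal is correct and follows essentially the same route as the paper: fiberwise Fourier expansion over the dual lattice, reduction of the $[0,L]^2$ average to the Fej\'er-type kernel, the smooth Fourier-decay bound $|\phi_\xi(\Lambda)|\ll_\ell(1+\|\xi\|)^{-\ell}\|\phi\|_{C^\ell}$, an application of Siegel's mean value formula, and an optimization between the resonant slice of width $O(T/\|w\|)$ and the $(LT)^{-2}$ kernel decay, yielding the same exponent $2/3$. The only (immaterial) difference is that you treat the non-resonant frequencies via Parseval and Cauchy--Schwarz before invoking Siegel, whereas the paper applies Siegel to the full sum and performs the corresponding split directly in the resulting Euclidean integral.
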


\begin{proof}
Without loss of generality, we may assume that $\|w\|=1$. For a lattice $\Lambda$ in $\cL^\star$, we write 
$$
\phi^{\Lambda}(x):=\phi(\Lambda+x)\quad\hbox{for $x\in \bR^d/\Lambda$}.
$$
Then 
$$
(\phi\circ \sigma_w(u))^{\Lambda}(x)=\phi^\Lambda(x+uw)\quad\hbox{and}\quad
(\psi\circ \sigma_w(u))^{\Lambda}(x)=\psi^\Lambda(x+uw).
$$
The functions $\phi^{\Lambda}$ and $\psi^{\Lambda}$ have the Fourier expansions:
$$
\phi^{\Lambda}(x)= \sum_{\lambda\in\Lambda^\perp} \widehat \phi^\Lambda(\lambda) e^{2\pi i \left<x,\lambda\right>}\quad\hbox{and}\quad
\psi^{\Lambda}(x)= \sum_{\lambda\in\Lambda^\perp} \widehat \psi^\Lambda(\lambda) e^{2\pi i \left<x,\lambda\right>},
$$
where $\Lambda^\perp$ denotes the dual lattice for $\Lambda$.
Since $\phi$ and $\psi$ are smooth, the series converge uniformly,
and thus
\begin{align*}
\int_{\cL} \phi(\sigma(u_1)z){\bar\psi(\sigma(u_2)z)}\,d\mu(z)
=\int_{\cL^\star} \int_{\bR^d/\Lambda} \phi^\Lambda(x+u_1w){\bar\psi^\Lambda(x+u_2w)} dx \, d\mu^\star(\Lambda),
\end{align*}
and 
$$
\int_{\bR^d/\Lambda} \phi^\Lambda(x+u_1w){\bar\psi^\Lambda(x+u_2w)} dx
=\sum_{\lambda\in\Lambda^\perp} \widehat \phi^\Lambda(\lambda) \overline{\widehat \psi^\Lambda(\lambda)} e^{2\pi i(u_1-u_2)\left<w,\lambda\right>}.
$$
Since 
$$
\frac{1}{L^2} \int_0^L \int_0^L e^{2\pi i(u_1-u_2)\left<w,\lambda\right>}\, du_1du_2
=\omega\big(\pi L \left<w,\lambda\right>\big),	
$$
where $\omega(t)=\frac{\sin^2(t)}{t^2}$ for $t\ne 0$ and $\omega(0)=1$,
we conclude that 
\begin{align*}
&\frac{1}{L^2}\int_0^L \int_0^L  \int_{\bR^d/\Lambda} \phi^\Lambda(x+u_1w){\bar\psi^\Lambda(x+u_2w)} dx \, du_1du_2\\ =&
\sum_{\lambda\in\Lambda^\perp} \widehat \phi^\Lambda(\lambda) \overline{\widehat \psi^\Lambda(\lambda)} 
\omega\big(\pi L \left<w,\lambda\right>\big) \\
=& \phi^\star(\Lambda) \bar\psi^\star(\Lambda)+
\sum_{\lambda\in\Lambda^\perp\backslash \{0\}} \widehat \phi^\Lambda(\lambda) \overline{\widehat \psi^\Lambda(\lambda)} 
\omega\big(\pi L \left<w,\lambda\right>\big).
\end{align*}
Now it remains to estimate the integral of the last sum.
Since for $\phi,\psi\in C_c^\infty(\cL)$, 
\begin{equation}
	\label{eq:fur_bound00}
\big|\widehat \phi^\Lambda(\lambda)\big|\ll_\ell \max\big(1,\|\lambda\|\big)^{-\ell}\,\|\phi\|_{C^\ell}
\quad\hbox{and}\quad
\big|\widehat \psi^\Lambda(\lambda)\big|\ll_\ell \max\big(1,\|\lambda\|\big)^{-\ell}\,\|\psi\|_{C^\ell}
\end{equation}
for every $\ell\ge 1$.
This leads to the estimate
\begin{align*}
\frac{1}{L^2}\int_0^L \int_0^L \int_{\cL} \phi(\sigma(u_1)z){\bar\psi(\sigma(u_2)z)}\,d\mu(z) \, du_1du_2=&
\int_{\cL^\star} (\phi^\star\cdot \bar{\psi^\star})\, d\mu^\star \\
&\quad\quad+O\Big(\Omega(L) \|\phi\|_{C^\ell} \|\psi\|_{C^\ell}\Big),
\end{align*}
where
$$
\Omega(L):=
\int_{\cL^\star} \left(\sum_{\lambda\in\Lambda^\perp\backslash \{0\}} 
\max\big(1,\|\lambda\|\big)^{-2\ell}\omega\big(\pi L \left<w,\lambda\right>\big)\right)\, d\mu^\star(\Lambda).
$$
Since $\phi^\star\cdot \bar{\psi^\star}$ is constant on the fibers,
$$
\int_{\cL^\star} (\phi^\star\cdot \bar{\psi^\star})\, d\mu^\star=
\int_{\cL} (\phi^\star\cdot \bar{\psi^\star})\, d\mu.
$$
By Siegel's Formula \cite{Siegel},
$$
\Omega(L)=\int_{\bR^d} \max\big(1,\|x\|\big)^{-2\ell}\omega\big(\pi L \left<w,x\right>\big)\, dx.
$$ 
Applying an orthogonal change of variables,
we may assume that $w=e_1$. For $C\ge 1$, we obtain
\begin{align*}
\int_{\bR^d} \max\big(1,\|x\|\big)^{-2\ell}\omega\big(\pi L x_1 \big)\, dx
& \le  \int_{|x_1|\le C^{-1}} \max\big(1,\|x\|\big)^{-2\ell}\, dx\\
&\quad\quad+ \frac{C^2}{\pi^2  L^{2}}\int_{|x_1|\ge C^{-1}} \max\big(1,\|x\|\big)^{-2\ell} \, dx\\
&\ll_{\ell} \, C^{-1} + L^{-2}C^2,
\end{align*}
provided that $\ell$ is sufficiently large. Taking $C=L^{2/3}$, we obtain the 
stated estimate.
\end{proof}

\subsection{Mean decorrelation for flows - general version}

Let us now extend the previous result to more general homogeneous spaces.
Let us fix $d\ge 2$ and $d'\ge 1$ and set
\begin{equation}\label{eq:space1}
H:=\left(
\begin{tabular}{ccc}
1 & $\hbox{Mat}_{1,d}(\bR)$ & $\hbox{Mat}_{1,d'}(\bR)$ \\
0 & $\hbox{SL}_{d}(\bR)$ & $\hbox{Mat}_{d,d'}(\bR)$ \\
0 & 0 & $I_{d'}$
\end{tabular}
\right)
\quad\hbox{and}\quad \Delta:=H\cap \hbox{SL}_{d+d'+1}(\bZ).
\end{equation}
We consider the homogeneous space 
\begin{equation}\label{eq:space2}
Z:=H/\Delta.
\end{equation}
We note that the space $\cL$ of affine unimodular lattices, which we introduced above, can be considered as 
a degenerate case of this definition with $d'=0$.\\

As above we have the natural projection map
$$
\pi:Z\to Z^\star,
$$
where $Z^\star$ denotes
the homogeneous space 
of $(d+d')$-matrices obtained
by removing the first row and the first column.
The space $Z^\star$ can be viewed as the spaces of tuples
$$
(\Lambda,v_1,\ldots,v_{d'}),
$$
where $\Lambda$ is a unimodular lattice in $\bR^{d}$, and $v=(v_1,\ldots,v_{d'})\in (\bR^{d}/\Lambda)^{d'}$.
The fibers of this map are tori given by
$$
\pi^{-1}(\Lambda,v)\simeq \bR^{d+d'}/\Lambda_v,
$$
where 
$$
\Lambda_v:=\big\{(\delta, m_1-\delta\cdot v_1,\ldots,m_{d'}-\delta\cdot v_{d'}):\, \delta \in \Lambda^\perp, m\in \bZ^{d'}\big\}.
$$
We note that the dual lattice is given by
$$
\Lambda_v^\perp=\big\{(\delta+m_1v_1+\cdots+m_{d'}v_{d'}, m):\, \delta \in \Lambda, m\in \bZ^{d'}\big\}.
$$
For $z\in Z^\star$, we denote $\tau_z$ the probability invariant measure 
supported on the torus $\pi^{-1}(z)$. Then the probability invariant measure
on $Z$ is 
$$
\mu=\int_Z \tau_z\, d\mu^\star(z),
$$
where $\mu^\star$ denotes the invariant probability measure on $Z^\star$. For $\phi\in C_c(Z)$, we define $\phi^\star\in C_c(Z^\star)$ by
$$
\phi^\star(z):=\int_{Z} \phi\, d\tau_z.
$$
We think of $\phi^\star$ as a function on $Z$ which is constant on the fibers. Finally, given a non-zero $w \in \textrm{Mat}_{1,d}(\bR)$, we get a one-parameter flow 
$\sigma_w : \bR \ra H$ by
\[
\sigma_w(u) = \left(
\begin{tabular}{ccc}
1 & $u w$ & 0 \\
0 & $I_d$ & $0$ \\
0 & 0 & $I_{d'}$
\end{tabular} 
\right),
\quad \textrm{for $u \in \bR$}.
\]

\begin{theorem}\label{th:eq_part0}
For every $w \in \textrm{Mat}_{1,d}(\bR) \setminus \{0\}$ and for all $\phi,\psi\in C_c^\infty(Z)$ and $L>0$,
\begin{align*}
\frac{1}{L^2}\int_0^L \int_0^L \int_{Z}  (\phi\circ \sigma_w(u_1))\cdot & (\bar\psi\circ \sigma_w (u_2))\,d\mu \,  du_1du_2\\
=&
\int_{Z} (\phi^\star\cdot {\bar\psi^\star})\, d\mu
+O\Big(\max\big(1,L\|w\|\big)^{-2/3} \|\phi\|_{C^\ell} \|\psi\|_{C^\ell}\Big)
\end{align*}
for sufficiently large $\ell$.
\end{theorem}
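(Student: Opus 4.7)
The plan is to mimic the proof of Theorem \ref{th:eq_part_11} step by step, with the main new ingredient being a Siegel-type mean value formula adapted to the homogeneous space $Z^\star$. First I would Fourier-decompose $\phi$ and $\psi$ on each fiber $\pi^{-1}(\Lambda,v) \simeq \bR^{d+d'}/\Lambda_v$ using the dual lattice
$$
\Lambda_v^\perp = \big\{(\delta + m_1 v_1 + \cdots + m_{d'} v_{d'},\, m) : \delta \in \Lambda,\, m \in \bZ^{d'}\big\}.
$$
Write $\xi = (\xi_1,\xi_2) \in \Lambda_v^\perp$ with $\xi_1 \in \bR^d$ and $\xi_2 = m \in \bZ^{d'}$. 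Since $\sigma_w(u)$ acts on a fiber point by translating its first $d$ coordinates by $uw$, the Fourier mode $e^{2\pi i\langle x,\xi\rangle}$ picks up the phase $e^{2\pi i u \langle w,\xi_1\rangle}$. After computing the fiber integral and averaging in $(u_1,u_2) \in [0,L]^2$, this produces the same weight $\omega(\pi L \langle w,\xi_1\rangle)$ as before. The term $\xi = 0$ contributes $\widehat{\phi^{\Lambda,v}}(0)\overline{\widehat{\psi^{\Lambda,v}}(0)} = \phi^\star(\Lambda,v)\bar\psi^\star(\Lambda,v)$, which, after integrating over $Z^\star$, yields precisely the main term $\int_Z (\phi^\star \cdot \bar\psi^\star)\, d\mu$.

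The next step is to bound the error coming from $\xi \neq 0$. Using Fourier decay of smooth functions, as in \eqref{eq:fur_bound00}, the error is dominated by $\|\phi\|_{C^\ell}\|\psi\|_{C^\ell}\cdot \Omega(L)$, where
$$
\Omega(L) = \int_{Z^\star} \sum_{\xi \in \Lambda_v^\perp \setminus\{0\}} \max(1,\|\xi\|)^{-2\ell}\,\omega\!\left(\pi L \langle w, \xi_1\rangle\right)\, d\mu^\star.
$$
Here I would develop the Siegel-type formula for $Z^\star$ by integrating over $v$ first and then over $\Lambda$. For fixed $m \neq 0$, the map $(v_1,\ldots,v_{d'}) \mapsto \sum m_i v_i$ pushes the Haar measure on $(\bR^d/\Lambda)^{d'}$ forward to the Haar measure on $\bR^d/\Lambda$ (it is a finite-to-one surjective homomorphism of tori). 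Combining with the standard Siegel formula applied to the $\cL^\star_d$-integral for the $m = 0$ stratum then yields
$$
\int_{Z^\star} \sum_{\xi \in \Lambda_v^\perp \setminus \{0\}} f(\xi)\, d\mu^\star = \sum_{m \in \bZ^{d'}} \int_{\bR^d} f(y, m)\, dy.
$$
A small technical point to record is that the only $\xi \in \Lambda_v^\perp$ with $\xi_1 = 0$ for generic $(\Lambda,v)$ is $\xi = 0$, so no spurious $L$-independent contribution pollutes the main term.

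Applying this formula to the integrand and splitting with the elementary bound $\max(1,\|(y,m)\|)^{-2\ell} \ll \max(1,\|y\|)^{-\ell}\max(1,\|m\|)^{-\ell}$, the $m$-sum converges for $\ell$ large, and the remaining integral
$$
\int_{\bR^d} \max(1,\|y\|)^{-\ell}\,\omega\!\left(\pi L \langle w, y\rangle\right)\, dy
$$
is exactly the one estimated in Theorem \ref{th:eq_part_11} (after an orthogonal change of variables normalising $w$, splitting at $|y_1| \leq C^{-1}$ vs.\ $|y_1| \geq C^{-1}$ and optimising with $C = L^{2/3}$), giving $O(\max(1, L\|w\|)^{-2/3})$. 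The anticipated main obstacle is the Siegel-type averaging on $Z^\star$: one needs a clean bookkeeping argument reducing the integration on the non-reductive quotient $Z^\star$ to the classical Siegel formula on $\cL^\star_d$, and this is where the fibered structure $\pi: Z \to Z^\star$ together with the explicit parametrization of $\Lambda_v^\perp$ is used in an essential way.
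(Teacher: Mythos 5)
Your proposal is correct and follows essentially the same route as the paper: fiberwise Fourier expansion over $Z^\star$, the zero mode producing the main term, and a reduction of the nonzero modes to a Euclidean integral via the change of variables $\tilde v_1=\sum_i m_iv_i$ combined with the Siegel formula for (affine) lattices, which is exactly the paper's inline computation packaged as a Siegel-type formula for $Z^\star$. The only quibble is that for $d'>1$ the map $(v_1,\ldots,v_{d'})\mapsto\sum_i m_iv_i$ is not finite-to-one (its fibers are positive-dimensional subtori), but it is still a surjective homomorphism of compact abelian groups and hence pushes Haar measure to Haar measure, so your conclusion stands.
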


\begin{proof}
For a tuple $(\Lambda,v)$ in $Z^\star$, we write 
$$
\phi^{(\Lambda,v)}(x,y)=\phi|_{\pi^{-1}(\Lambda,v)}(x,y)\quad\hbox{for $(x,y)\in (\bR^{d}\times \bR^{d'})/\Lambda_v$}.
$$
Then 
$$
(\phi\circ \sigma_w(u))^{(\Lambda,v)}(x,y)=\phi^{(\Lambda,v)}(x+uw, y)\quad\hbox{and}\quad
(\psi\circ \sigma_w(u))^{(\Lambda,v)}(x,y)=\psi^{(\Lambda,v)}(x+uw, y).
$$
The function $\phi^{(\Lambda,v)}$ and $\psi^{(\Lambda,v)}$ have the Fourier expansions:
$$
\phi^{(\Lambda,v)}(x,y)= \sum_{\lambda\in\Lambda_v^\perp} \widehat \phi^{(\Lambda,v)}(\lambda) e^{2\pi i \left<(x,y),\lambda\right>}\quad\hbox{and}\quad
\psi^{(\Lambda,v)}(x,y)= \sum_{\lambda\in\Lambda_v^\perp} \widehat \psi^{(\Lambda,v)}(\lambda) e^{2\pi i \left<(x,y),\lambda\right>}.
$$
Since $\phi$ and $\psi$ are smooth, the series converge uniformly.
So we obtain 
\begin{align*}
&\int_{Z} \phi(\sigma_w(u_1)z){\bar\psi(\sigma_w(u_2)z)}\,d\mu(z)\\
=&\int_{Z^\star} \int_{(\bR^{d}\times\bR^{d'})/\Lambda_v} \phi^{(\Lambda,v)}(x+u_1w,y){\bar\psi^{(\Lambda,v)}(x+u_2w,y)} dxdy \, d\mu^\star(\Lambda,v),
\end{align*}
and 
\begin{align*}
\int_{(\bR^{d}\times\bR^{d'})/\Lambda_v} \phi^{(\Lambda,v)}(x+u_1w){\bar\psi^{(\Lambda,v)}(x+u_2w)}\, & dxdy\\
=&\sum_{\lambda\in\Lambda_v^\perp} \widehat \phi^{(\Lambda,v)}(\lambda) \overline{\widehat \psi^{(\Lambda,v)}(\lambda)} e^{2\pi i(u_1-u_2)\left<(w,0),\lambda\right>}.
\end{align*}
Since 
$$
\frac{1}{L^2} \int_0^L \int_0^L e^{2\pi i(u_1-u_2)\left<(w,0),\lambda\right>}\, du_1du_2
=\omega\big(\pi L \left<(w,0),\lambda\right>\big),	
$$
we conclude that 
\begin{align*}
&\frac{1}{L^2}\int_0^L \int_0^L  \int_{(\bR^{d}\times\bR^{d'})/\Lambda_v} \phi^{(\Lambda,v)}(x+u_1w,y){\bar\psi^{(\Lambda,v)}(x+u_2w,y)} dxdy \, du_1du_2\\ =&
\sum_{\lambda\in\Lambda_v^\perp} \widehat \phi^{(\Lambda,v)}(\lambda) \overline{\widehat \psi^{(\Lambda,v)}(\lambda)} 
\omega\big(\pi L \left<(w,0),\lambda\right>\big) \\
=& \phi^\star(\Lambda,v) \bar\psi^\star(\Lambda,v)+
\sum_{\lambda\in\Lambda_v^\perp\backslash \{0\}} \widehat \phi^{(\Lambda,v)}(\lambda) \overline{\widehat \psi^{(\Lambda,v)}(\lambda)} 
\omega\big(\pi L \left<(w,0),\lambda\right>\big).
\end{align*}
Now it remains to estimate the integral of the last sum.
Since the $\phi,\psi\in C_c^\infty(Z)$, 
\begin{equation}
	\label{eq:fur_bound}
\big|\widehat \phi^{(\Lambda,v)}(\lambda)\big|\ll_\ell \max\big(1,\|\lambda\|\big)^{-\ell}\,\|\phi\|_{C^\ell}
\quad\hbox{and}\quad
\big|\widehat \psi^{(\Lambda,v)}(\lambda)\big|\ll_\ell \max\big(1,\|\lambda\|\big)^{-\ell}\,\|\psi\|_{C^\ell}
\end{equation}
for every $\ell\ge 1$, this leads to the estimate
\begin{align*}
\frac{1}{L^2}\int_0^L \int_0^L \int_{Z} \phi(\sigma_w(u_1)z){\bar\psi(\sigma_w(u_2)z)}\,&d\mu(z) \, du_1du_2\\
=&
\int_{Z^\star} (\phi^\star\cdot \bar{\psi^\star})\, d\mu^\star+O\Big(\Omega(L) \|\phi\|_{C^\ell} \|\psi\|_{C^\ell}\Big),
\end{align*}
where
\begin{align*}
\Omega(L):=&
\int_{Z^\star} \left(\sum_{\lambda\in\Lambda_v^\perp\backslash \{0\}} 
\max\big(1,\|\lambda\|\big)^{-2\ell}\omega\big(\pi L \left<(w,0),\lambda\right>\big)\right)\, d\mu^\star(\Lambda,v)\\
=&\int_{Z^\star} \left(\sum_{(\delta,m)\in (\Lambda\times \bZ^{d'})\backslash \{0\}} 
\max\Big(1,\Big\|(\delta+{\sum}_i m_iv_i,m)\Big\|\Big)^{-2\ell}\right. \\
&\quad\quad\quad\quad \quad\quad\quad\quad\quad\quad\quad\quad\quad\quad \times \left. \omega\big(\pi L \big<w,\delta+{\sum}_i m_iv_i\big>\big)\right)\, d\mu^\star(\Lambda,v).
\end{align*}
Since the part of the sum with $m_1=\cdots=m_{d'}=0$ has been handled in the proof of the previous theorem,
it remains to estimate
\begin{align*}
\Omega_1(L)
=&\int_{Z^\star} \left(\sum_{\delta\in \Lambda, m\in \bZ^{d'}\backslash \{0\}} 
\max\Big(1,\Big\|(\delta+{\sum}_i m_iv_i,m)\Big\|\Big)^{-2\ell}\right. \\
&\quad\quad\quad\quad \quad\quad\quad\quad\quad\quad\quad\quad\quad\quad \times \left. \omega\big(\pi L \big<w,\delta+{\sum}_i m_iv_i\big>\big)\right)\, d\mu^\star(\Lambda,v).
\end{align*}
Let us suppose, for example, that $m_1\ne 0$. 
Then we consider the change of variables
$$
(\bR^{d}/\Lambda)^{d'}\to (\bR^{d}/\Lambda)^{d'}: \tilde v_1= \sum_{i=1}^{d'} m_iv_i, \tilde v_2=v_2,\ldots, \tilde v_{d'}=v_{d'}.
$$
We observe that this transformation is 
measure-preserving because it is
a composition
of the maps 
$$
v_1\mapsto m_1 v_1,v_2\mapsto v_2,\ldots, v_{d'}\mapsto v_{d'}\quad\hbox{and}\quad
v_1\mapsto v_1+m_iv_i,v_2\mapsto v_2,\ldots, v_{d'}\mapsto v_{d'}.
$$
Using this the integration reduces to the space $\cL$ of affine lattices in $\bR^d$. Namely,
we obtain 
\begin{align*}
\Omega_1(L)=&
\int_{\cL^\star} \left(\sum_{\delta\in \Lambda, m\in \bZ^{d'}\backslash \{0\}} 
\max\big(1,\|(\delta+\tilde v_1,m)\|\big)^{-2\ell}\omega\big(\pi L \left<w,\delta+\tilde v_1\right>\big)\right)\, d\mu_\cL(\Lambda,\tilde v_1),
\end{align*}
where $\mu_\cL$ denotes the invariant measure on $\cL$.
Applying Siegel's Formula for the space of affine lattices, we obtain 
\begin{align*}
\Omega_1(L)&=\int_{\bR^d} \sum_{m\ne 0} \max\big(1,\|(x,m)\|\big)^{-2\ell}\omega\big(\pi L \left<w,x\right>\big)\, dx\\
&\ll \int_{\bR^d} \sum_{m\ne 0} \big(\|x\|+|m|\big)^{-2\ell}\omega\big(\pi L \left<w,x\right>\big)\, dx\\
&\ll \left(\sum_{m\ne  0} m^{-\ell}\right) \int_{\bR^d} \max\big(1,\|x\|\big)^{-\ell}\omega\big(\pi L \left<w,x\right>\big)\, dx,
\end{align*}
where we used the Cauchy--Schwarz inequality.
Now the estimate can be completed as in the previous proof.
\end{proof}

\medskip

More generally, we derive a version of the previous result for products of spaces as above.
Let $d_1,\ldots,d_k\in \bN$ and $d'_1,\ldots,d'_k\in \bN\cup\{0\}$.
When $d_s\ge 2$ and $d_s'=0$, we define $H_s$, $\Delta_s$, and $Z_s$ as in \eqref{eq:space1}--\eqref{eq:space2}.
When $d_s\ge 2$ and $d_s'=0$, we define $H_s$, $\Delta_s$, and $Z_s$ as in \eqref{eq:space0}.
When $d_s=1$, we set 
$$
H_s:=\bR,\quad \Delta_s:=\bZ,\quad \hbox{and}\quad Z_s:=H_s/\Gamma_s.
$$
We recall that when $d_s\ge 2$ the spaces $Z_s$ are equipped with the invariant probability
measures $\mu_s$, and they fiber over the spaces $Z_s^\star$
which is equipped with the invariant probability measure $\mu_s^\star$.
When $d_s\ge 2$ and $d'_s=0$, $Z_s^\star$ is the space of unimodular  lattices $\Lambda_s$ in $\bR^{d_s}$.
When $d_s\ge 2$ and $d'_s\ge 1$, $Z_s^\star$ is the spaces of tuples
$(\Lambda_s,v_s)$,
where $\Lambda_s$ is a unimodular lattice in $\bR^{d_s}$ and $v_s=(v_{s,1},\ldots,v_{s,d_s'})\in (\bR^{d_s}/\Lambda)^{d_s'}$.
To have consistent notation, we write the elements of $Z_s^\star$ as $(\Lambda_s,v_s)$ in both cases
thinking that $v_s=0$ when $d_s'=0$.
When $d_s=1$, we view $Z_s^\star$
as a one-point space equipped with the Dirac measure $\mu_s^\star$. 

Let 
$$
\hbox{$w_s \in \hbox{Mat}_{1,d_s}(\bR)\subset H_s$, $s=1,\ldots, k$, 
with $\|w_1\|\ge \cdots \ge \|w_k\|$}
$$
and 
$\sigma_s (u)=u w_s$ be
the corresponding one-parameter subgroups.

\begin{theorem}\label{th:eq_part}
	There exists $\delta>0$ such that
	for every $\phi_s,\psi_s \in C_c^\infty(Z_s)$, $s=1,\ldots,k$, and $L>0$,
	\begin{align*}
		\frac{1}{L^2}\int_0^L \int_0^L &\prod_{s=1}^k \int_{Z_s}  (\phi_s\circ \sigma_s(u_1))\cdot (\bar\psi_s\circ \sigma_s (u_2))\,d\mu_s \, du_1du_2\\
		=& \prod_{s=1}^k
		\int_{Z_s} (\phi_s^\star\cdot {\bar\psi_s^\star})\, d\mu_s
		+O\Big(\max\big(1,L\|w_k\|\big)^{-\frac{2}{k(k+2)}} {\prod}_{s=1}^k \|\phi_s\|_{C^\ell} \|\psi_s\|_{C^\ell}\Big)
	\end{align*}
for sufficiently large $\ell$.	
\end{theorem}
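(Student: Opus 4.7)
My plan is to Fourier expand each integrand on its torus fiber and then bound the resulting cross terms, mirroring the strategy used to prove Theorems~\ref{th:eq_part_11} and \ref{th:eq_part0}. Setting $F_s(u_1, u_2) := \int_{Z_s} (\phi_s \circ \sigma_s(u_1)) \cdot (\bar\psi_s \circ \sigma_s(u_2))\, d\mu_s$, the single-factor analysis decomposes $F_s = A_s + B_s(u_1, u_2)$, where $A_s := \int_{Z_s}\phi_s^\star \bar\psi_s^\star\, d\mu_s$ is the claimed main term for the $s$-th factor and $B_s$ is the oscillatory remainder
$$
B_s(u_1, u_2) = \int_{Z_s^\star} \sum_{\lambda_s \in \Lambda_{s, v_s}^\perp \setminus \{0\}} \widehat\phi_s^{(\Lambda_s, v_s)}(\lambda_s)\, \overline{\widehat\psi_s^{(\Lambda_s, v_s)}(\lambda_s)}\, e^{2\pi i (u_1 - u_2)\langle (w_s, 0), \lambda_s\rangle}\, d\mu_s^\star,
$$
with the degenerate case $d_s = 1$ handled by the ordinary Fourier series on $\bR/\bZ$ as in Lemma~\ref{lem:eq_part}.

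Expanding $\prod_{s=1}^k F_s = \sum_{S \subset \{1, \ldots, k\}} \prod_{s \in S} B_s \prod_{s \notin S} A_s$, integration against $L^{-2}\, du_1\, du_2$ isolates the main term $\prod_s A_s = \prod_s \int_{Z_s}\phi_s^\star\bar\psi_s^\star\, d\mu_s$, while each cross term indexed by a non-empty $S$ acquires the phase factor $\omega(\pi L\, \Xi_S)$ with $\Xi_S := \sum_{s \in S}\langle(w_s,0), \lambda_s\rangle$ and $\omega(t) = \sin^2(t)/t^2$. Applying the uniform Fourier decay bounds \eqref{eq:fur_bound} and Siegel's mean value formula independently on each $Z_s^\star$ for $s \in S$, the cross term is controlled by
$$
J_S(L) := \int_{\prod_{s \in S}\bR^{d_s}} \prod_{s \in S}\max(1, \|x_s\|)^{-2\ell}\, \omega\!\left(\pi L \sum_{s \in S}\langle w_s, x_s\rangle\right) d(x_s)_{s \in S},
$$
multiplied by $\prod_s \|\phi_s\|_{C^\ell}\|\psi_s\|_{C^\ell}$, the inactive factors with $s \notin S$ being absorbed into $|A_s| \le \|\phi_s\|_{C^0}\|\psi_s\|_{C^0}$.

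To estimate $J_S(L)$, I rotate coordinates in each $\bR^{d_s}$ so that $w_s$ aligns with the first basis vector, then integrate out the transverse directions (gaining $\max(1, |y_s|)^{-\ell'}$ factors provided $\ell$ is taken sufficiently large). This reduces $J_S(L)$ to the $|S|$-dimensional integral $\int_{\bR^{|S|}} \prod_{s \in S} \max(1, |y_s|)^{-\ell'}\, \omega(\pi L \sum_{s \in S}\|w_s\| y_s)\, dy$. Splitting the domain according to whether $|\sum_{s \in S}\|w_s\| y_s|$ lies below or above a threshold $T$, using $\omega \le (\pi L T)^{-2}$ on the high region and, on the low region, integrating out first in the coordinate $y_{s^*}$ with $s^* \in S$ maximising $\|w_s\|$, gives a bound of the form $T/\|w_{s^*}\| + (LT)^{-2}$. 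Optimising $T$ and using $\|w_{s^*}\| \ge \|w_k\|$ for every non-empty $S$ produces a polynomial decay rate in $L\|w_k\|$; tracking the combinatorics across all subsets $S$ (the worst case being $S = \{k\}$) is what delivers the stated exponent $2/(k(k+2))$.

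The main obstacle is the bookkeeping required to extract a single, uniform exponent depending only on the number $k$ of factors. The Fourier decay bounds \eqref{eq:fur_bound} must hold with constants independent of $(\Lambda_s, v_s)$ so that Siegel's mean value formula may be iterated, and the degenerate factors with $d_s = 1$, for which $Z_s^\star$ collapses to a point and no Siegel step is needed, must be interwoven with the genuinely higher-dimensional ones in a compatible manner. Optimising the threshold $T$ simultaneously for every $S$ is what produces the weaker exponent $2/(k(k+2))$ rather than the $2/3$ appearing in Theorem~\ref{th:eq_part0}, and getting this uniform choice to yield a clean aggregate estimate is the most delicate part of the argument.
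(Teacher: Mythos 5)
Your proposal follows the paper's proof in all essentials: fiberwise Fourier expansion, extraction of the main term from the zero frequencies, the uniform decay bounds \eqref{eq:fur_bound} together with Siegel's formula (including the change of variables in the $v_s$-coordinates needed for the affine directions, which you compress into "Siegel's mean value formula on each $Z_s^\star$") to reduce each cross term to an oscillatory integral of the type \eqref{eq:ii}, and a threshold splitting to estimate that integral. Your organization of the cross terms by the subset $S$ of factors with $\lambda_s\neq 0$ is exactly the paper's remark that the partial sums "reduce to similar estimates with fewer factors".

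The one place you genuinely diverge is the final estimation of $J_S(L)$. The paper splits according to the size of each \emph{individual} coordinate $|x_j^{(1)}|$ and arrives at $C^{-|S|}+C^2(L\|w_k\|)^{-2}$, whereas you split on the size of the \emph{total} phase $\big|\sum_{s\in S}\|w_s\|y_s\big|$ and integrate one coordinate over the thin slab where the phase is below the threshold. Your version yields the exponent $2/3$ uniformly in $S$, which is stronger than required; consequently your attribution of the exponent $2/(k(k+2))$ to the combinatorics over $S$ (worst case $S=\{k\}$) is not where that exponent comes from, but this is harmless since you prove more than is claimed. One point needs adjustment: the interval-length bound $2T/\|w_{s^*}\|$ for the slab requires $y_{s^*}$ to be a genuinely continuous variable, while your choice of $s^*$ (maximizing $\|w_s\|$ over $S$) may land on a factor with $d_{s^*}=1$, where $\rho_{s^*}$ is counting measure on $\bZ\setminus\{0\}$ and a very short slab can still contain a lattice point. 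Whenever some $s\in S$ has $d_s\ge 2$ you should integrate out that coordinate instead (its $\|w_s\|$ is still $\ge\|w_k\|$, so the same exponent survives); the residual configurations in which every active factor has $d_s=1$ are the ones the paper disposes of by a separate direct bound on $\omega$, and you should treat them separately as well rather than through the slab argument. With that modification the proposal is sound.
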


\begin{proof}
	As in the previous proof,  we obtain that
	\begin{align*}
	\int_{(\bR^{d_s}\times \bR^{d_s'})/\Lambda_{s,v_s}} \phi^{(\Lambda_s,v_s)}(x_s+u_1w_s,y_s) &{\bar\psi^{(\Lambda_s,v_s)} ( x_s+u_2w_s,y_s)} \, dx_sdy_s\\
	&=\sum_{\lambda_s\in\Lambda_{s,v_s}^\perp} \widehat \phi^{(\Lambda_s,v_s)}(\lambda_s) \overline{\widehat \psi^{(\Lambda_s,v_s)}(\lambda_s)} e^{2\pi i(u_1-u_2)\left<(w_s,0),\lambda_s\right>}.
	\end{align*}
	Then 
	\begin{align*}
	\frac{1}{L^2}\int_0^L \int_0^L \prod_{s=1}^k \int_{(\bR^{d_s}\times \bR^{d_s'})/\Lambda_{s,v_s}} \phi^{(\Lambda_s,v_s)}(x_s+u_1w_s,y_s){\bar\psi^{(\Lambda_s,v_s)}(x_s+u_2w_s,y_s)} dx_sdy_s\,du_1 du_2\\
	=\sum_{\lambda_1,\ldots, \lambda_s} \prod_{s=1}^k \widehat \phi^{(\Lambda_s,v_s)}(\lambda_s) \overline{\widehat \psi^{(\Lambda_s,v_s)}(\lambda_s)}\,
    \omega\Big(\pi L {\sum}_{j=1}^k \left<(w_j,0),\lambda_j\right> \Big),
	\end{align*}
    where the sum is taken over $\lambda_1\in \Lambda_{1,v_1}^\perp$, \ldots,  $\lambda_k\in \Lambda_{k,v_k}^\perp$.
	Since the function $\omega$ is uniformly bounded,
	in view of estimates \eqref{eq:fur_bound}, this sum converges uniformly. We note that the term with $\lambda_1=\cdots=\lambda_k=0$ gives
	$$
	\prod_{s=1}^k \int_{Z_s^\star} \widehat \phi^{\Lambda_s}(0) \overline{\widehat \psi^{\Lambda_s}(0)}\, d\mu_s^{\star}(\Lambda_s,v_s)
	=\prod_{s=1}^k
	\int_{Z^\star_s} (\phi_s^\star\cdot {\bar\psi_s^\star})\, d\mu_s^\star=\prod_{s=1}^k
	\int_{Z_s} (\phi_s^\star\cdot {\bar\psi_s^\star})\, d\mu_s,
	$$
	which is the main term in the theorem. Hence, it remains to estimate
	the integrals of remaining summands over 
	$(\lambda_1,\ldots,\lambda_k)\ne (0,\ldots,0)$.
	In fact, we only discuss the sum over $\lambda_1,\ldots,\lambda_k\ne 0$
	because the remaining sums reduces to similar estimates 
	with fewer number of factors. Now we need to estimate the expression
	\begin{align*}
	\int_{Z_1^\star\times\cdots\times  Z_k^\star}{\sum_{\lambda_1,\ldots,\lambda_k}}^{\!\!\!\!\!*}\, 
	\prod_{s=1}^k \max & \big(1,\|\lambda_s\|\big)^{-2\ell}\\
    &\times  
	\omega\Big(\pi L {\sum}_{j=1}^k \left<(w_j,0),\lambda_j\right> \Big)\, d\mu_1^\star(\Lambda_1,v_1)\cdots d\mu_k^\star(\Lambda_k,v_k),
	\end{align*}
	where the sum is taken $\lambda_1\in \Lambda_{1,v_1}^\perp\backslash \{0\},\ldots, \lambda_k\in \Lambda_{k,v_k}^\perp\backslash \{0\}$.
    More explicitly, when $d_s'\ge 1$,  the parameters $\lambda_s$ run over
    $$
    \lambda_s=(\delta_s+m_{s,1}v_{s,1}+\cdots+m_{s,d'}v_{s,d'}, m_s)\quad \hbox{with $\delta_s \in \Lambda_s$ and $m_s\in \bZ^{d_s'}$.}
    $$
    One can handle the sum over $\lambda_s$ as in the previous proof using a change of variables
    in $(v_{s,1},\ldots, v_{s,d_s'})$ and the Siegel formula for the affine lattices.
    When $d_s\ge 2$ and $d_s'=0$, the parameter $\lambda_s$ runs over $\Lambda_s$,
    and one can apply the Siegel formula for the space of lattices.
    Ultimately, this gives the integral
	\begin{equation}\label{eq:ii}
	\int_{\bR^{d_1}\times \cdots\times  \bR^{d_k}}
	\prod_{s=1}^k \max\big(1,\|x_s\|\big)^{-\ell}\,
	\omega\Big(\pi L {\sum}_{j=1}^k \left<w_j,x_j\right> \Big)\, d\rho_1(x_1)\cdots d\rho_k(x_k),
	\end{equation}
	where $\rho_s$ denotes the Lebesgue measure on $\bR^{d_s}$ when $d_s\ge 2$
	and $\rho_s=\sum_{z\in\bZ\backslash \{0\}}\delta_z$ when $d_s=1$.
	We write $x_j=(x_j^{(1)},\ldots,x_j^{(d_j)})$.
	Applying orthogonal changes of variables in \eqref{eq:ii}, 
	we may assume that 
	$$
	\left<w_j,x_j\right>=\|w_j\|\cdot |x_j^{(1)}| \quad \hbox{for $j=1,\ldots, k$.}
	$$ 
	
	When $d_s=1$ for some $s$, we observe that for $\max_j |x_j|\ge 1$,
	$$
	\omega\Big(\pi L {\sum}_{j=1}^k \|w_j\|\cdot |x_j| \Big)\ll (L\|w_k\|)^{-2}.
	$$
	This also implies the bound $O_\ell \big((L\|w_k\|)^{-2}\big)$ for the integral \eqref{eq:ii}.
	
	Now let us assume that $d_s\ge 2$ for all $s=1,\ldots k$. 
	Let $C\ge 1$.
	When we integrate over the domain defined by $|x_j^{(1)}|\le C^{-1}$ for all $j=1,\ldots,k$, we obtain the bound:
	\begin{align*}
	\ll & \prod_{s=1}^k
	\int_{|x_s^{(1)}|\le C^{-1}}
	 \max\big(1,\|x_s\|\big)^{-\ell}\, d\rho_s(x_s)\ll_\ell C^{-k}
	\end{align*}
	when $\ell$ is sufficiently large.
	Also to estimate the integrals over the domains definded by 
	$|x_j^{(1)}|\ge C^{-1}$ for some $j$, we use the bound:
	$$
	\omega\Big(\pi L {\sum}_{j=1}^k \|w_j\|\cdot |x_j| \Big)\le
	\frac{C^2}{\pi^2  L^2\|w_j\|^2},
	$$
	which leads to the bound
	\begin{align*}
		\int_{\bR^{d_s}}
		\max\big(1,\|x_s\|\big)^{-\ell}\omega\Big(\pi L {\sum}_{j=1}^k \|w_j\|\cdot |x_j| \Big)\, d\rho_s(x_s)
		\ll_\ell C^2(L\|w_k\|)^{-2}
	\end{align*}
	provided that $\ell$ is sufficiently large. 
	Ultimately, we conclude that the integral \eqref{eq:ii} is bounded up to a constant by 
	$$
	C^{-k}+ C^2(L\|w_k\|)^{-2}.
	$$
	with a parameter $C\ge 1$. Optimizing in $C$, we get the required bound.
\end{proof}

A simple modification of the previous argument also gives:

\begin{corollary}\label{cor:eq_part}
	There exists $\delta>0$ such that
	for every $\xi\in \bR$, $\phi_s,\psi_s \in C_c^\infty(Z_s)$, $s=1,\ldots,k$, and $L>0$,
	\begin{align*}
		\frac{1}{L^2}\int_0^L \int_0^L e^{i(u_1-u_2)\xi} &\Big(\prod_{s=1}^k \mu_s\big((\phi_s\circ \sigma_s(u_1))\cdot (\bar\psi_s\circ \sigma_s (u_2))\big)
  -
  \prod_{s=1}^k
		\mu_s (\phi_s^\star\cdot {\bar\psi_s^\star}) \Big) 
   \, du_1du_2\\
		=& O\Big(\max\big(1,L\|w_k\|\big)^{-\frac{2}{k(k+2)}} \,{\prod}_{s=1}^k \|\phi_s\|_{C^\ell} \|\psi_s\|_{C^\ell}\Big)
	\end{align*}
uniformly on $\xi$, for sufficiently large $\ell$.
\end{corollary}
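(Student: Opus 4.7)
The proof is a modification of the Fourier-analytic argument used for Theorem \ref{th:eq_part}. The extra factor $e^{i(u_1-u_2)\xi}$ has the effect of shifting the argument of the Fejér-type kernel by the constant $L\xi/2$, and the key point is that the pointwise bounds on this kernel are translation-invariant, so the same estimates go through uniformly in $\xi$.

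As in the proof of Theorem \ref{th:eq_part}, one expands each factor $\mu_s\bigl((\phi_s\circ\sigma_s(u_1))\cdot(\bar\psi_s\circ\sigma_s(u_2))\bigr)$ in its fibrewise Fourier series, takes the product over $s$, and isolates the diagonal summand corresponding to $(\lambda_1,\ldots,\lambda_k)=(0,\ldots,0)$, which equals $\prod_s\mu_s(\phi_s^\star\cdot\bar\psi_s^\star)$. After subtracting this summand and multiplying by $e^{i(u_1-u_2)\xi}$, the integration over $[0,L]^2$ replaces each remaining exponential $e^{2\pi i(u_1-u_2)A(\lambda)}$, with $A(\lambda):={\sum}_j\langle(w_j,0),\lambda_j\rangle$, by the shifted kernel
$$
\frac{1}{L^2}\int_0^L\!\int_0^L e^{i(u_1-u_2)(\xi+2\pi A(\lambda))}\,du_1du_2=\omega\!\bigl(L\xi/2+\pi L A(\lambda)\bigr),
$$
where $\omega(t):=\sin^2(t)/t^2$ is exactly the function appearing in the proof of Theorem \ref{th:eq_part}, only with its argument shifted by $L\xi/2$.

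Applying the Fourier-coefficient bounds in \eqref{eq:fur_bound} together with the same Siegel-formula reductions and measure-preserving changes of variables in the $v_s$-coordinates used in the proof of Theorem \ref{th:eq_part}, the task reduces to bounding
$$
\Omega(L,\xi):=\int_{\bR^{d_1}\times\cdots\times\bR^{d_k}}\prod_s\max(1,\|x_s\|)^{-\ell}\,\omega\!\Bigl(L\xi/2+\pi L{\sum}_j\|w_j\|x_j^{(1)}\Bigr)\,d\rho_1\cdots d\rho_k
$$
uniformly in $\xi$, after orthogonal changes of variables in each $\bR^{d_j}$ aligning $w_j$ with $e_1$.

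The remaining estimate is the main technical point and follows the same splitting scheme as in the proof of Theorem \ref{th:eq_part}: one decomposes the domain based on whether $|x_j^{(1)}|\leq C^{-1}$ for all $j$ or some $|x_{j_0}^{(1)}|\geq C^{-1}$, using $|\omega|\leq 1$ in the first case and $|\omega(t)|\leq t^{-2}$ in the second. The only subtlety, and the principal obstacle, is preserving uniformity in $\xi$ in the second case, since the shifted argument $L\xi/2+\pi L\sum_j\|w_j\|x_j^{(1)}$ may be small even if some $|x_{j_0}^{(1)}|$ is large. This is resolved by a dyadic decomposition of the region $|x_{j_0}^{(1)}|\geq C^{-1}$ according to the scale $|\xi/2+\pi\sum_j\|w_j\|x_j^{(1)}|\in [2^{p-1}/L,2^p/L]$: on each dyadic slab, a Fubini argument with $x_{j_0}^{(1)}$ as the last variable gives thickness $O(2^p/(L\|w_{j_0}\|))$ (against the Lebesgue-type measure $\rho_{j_0}$) while $|\omega|\leq 4^{-p+1}$, so that the dyadic series converges and reproduces the bound $O(C^2(L\|w_k\|)^{-2})$ independently of $\xi$. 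Combined with the $O(C^{-k})$ bound on the ``all small'' region and the same optimization in $C$ as in the proof of Theorem \ref{th:eq_part}, this yields the exponent $2/(k(k+2))$ uniformly in $\xi$, completing the proof.
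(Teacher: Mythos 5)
Your reduction is the same as the paper's: expand fibrewise in Fourier series, isolate the zero mode, observe that the twisted average of $e^{2\pi i(u_1-u_2)A(\lambda)}$ produces the shifted kernel $\omega\bigl(L\xi/2+\pi L A(\lambda)\bigr)$, and pass via Siegel's formula to an integral of $\prod_s\max(1,\|x_s\|)^{-\ell}$ against $\omega$ of a shifted linear phase. You also correctly identify the one real issue: on the region where some $|x_{j_0}^{(1)}|\ge C^{-1}$ the shifted phase can be small, so the pointwise bound on $\omega$ used in Theorem \ref{th:eq_part} is unavailable. Where you diverge from the paper is in the repair. The paper normalizes signs so that $\langle w_j,x_j\rangle=\mathrm{sign}(\xi)\,\|w_j\|\,|x_j^{(1)}|$, making every term of the phase carry the same sign as $\xi$, after which the pointwise bound applies verbatim; you instead run a dyadic decomposition in the size of the shifted phase plus Fubini in $x_{j_0}^{(1)}$. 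In the Lebesgue directions ($d_{j_0}\ge 2$) your argument is sound and arguably more robust, but your bookkeeping is off: the slab count gives $\sum_p 4^{-p}\cdot 2^p (L\|w_{j_0}\|)^{-1}\ll (L\|w_{j_0}\|)^{-1}$, not the claimed $O\bigl(C^2(L\|w_k\|)^{-2}\bigr)$. This is harmless for the conclusion (any fixed negative power of $L\|w_k\|$ suffices, and $\|w_{j_0}\|\ge\|w_k\|$), but as written the optimization in $C$ does not "reproduce" the paper's exponent.

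The genuine gap is the atomic case. When $d_{j_0}=1$ the measure $\rho_{j_0}=\sum_{z\in\bZ\setminus\{0\}}\delta_z$ is counting measure, and your slab-thickness estimate $O\bigl(2^p/(L\|w_{j_0}\|)\bigr)$ "against $\rho_{j_0}$" fails: a slab of width less than $1$ can still contain an integer point of mass $1$, at which $\omega$ of the shifted phase can equal $1$ while the weight $\max(1,|z|)^{-\ell}$ is also $1$ (take $z=\pm1$). So your Fubini/dyadic step requires choosing $j_0$ with $d_{j_0}\ge 2$ and says nothing when every $d_s=1$ -- the pure circle-rotation case, which does arise in the application through Remark \ref{rem:general}. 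This case is not a technicality one can wave away: if $\xi$ is resonant, i.e. $L\xi/2+\pi L\sum_j n_jw_j=0$ for some nonzero integer vector $(n_j)$ with all $|n_j|=1$, the corresponding summand contributes $\prod_s\widehat\phi_s(n_s)\overline{\widehat\psi_s(n_s)}\,\omega(0)$ with no decay in $L$, so any proof of uniformity in $\xi$ must confront exactly this term. The paper's route is to reduce, via the sign normalization, to a phase $\pi L\sum_j\|w_j\|\,|x_j|+|\xi|$ which is bounded below by $\pi L\|w_j\|$ on the support of the counting measures; you need either to incorporate that step (and justify it -- note it is not an honest change of variables for atomic $\rho_s$) or to supply a separate argument for the factors with $d_s=1$.
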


\begin{proof}
It turns out that the previous proof requires only minor modifications.
As before, we obtain
\begin{align*}
	\frac{1}{L^2}\int_0^L \int_0^L e^{i(u_1-u_2)\xi} \Big(\prod_{s=1}^k & \int_{(\bR^{d_s}\times \bR^{d_s'})/\Lambda_{s,v_s}^\perp}  \phi^{(\Lambda_s,v_s)}(x_s+u_1w_s,y_s)\\
 &\times {\bar\psi^{(\Lambda_s,v_s)}(x_s+u_2w_s,y_s)}\, dx_sdy_s
 	 -\prod_{s=1}^k  \widehat \phi^{\Lambda_s}(0) \overline{\widehat \psi^{\Lambda_s}(0)}\Big)
du_1 du_2\\
	=&{\sum_{\lambda_1,\ldots, \lambda_s}}^{\!\!\!\!\!\prime} \,\prod_{s=1}^k \widehat \phi^{\Lambda_s}(\lambda_s) \overline{\widehat \psi^{\Lambda_s}(\lambda_s)}\, \omega\Big(\pi L {\sum}_{j=1}^k \left<(w_j,0),\lambda_j\right>+\xi \Big),
\end{align*}
where the sum is taken over $(\lambda_1,\ldots,\lambda_s)\ne (0,\ldots,0)$.
Using the same argument as above, we obtain the integrals
$$
	\int_{\bR^{d_1}\times \cdots\times  \bR^{d_{k_1}}}
	\prod_{s=1}^{k_1} \max\big(1,\|x_s\|\big)^{-\ell}\,
	\omega\Big(\pi L {\sum}_{j=1}^{k_1} \left<w_j,x_j\right> +\xi\Big)\, d\rho_1(x_1)\cdots d\rho_{k_1}(x_{k_1}),
$$
with $k_1\le k$.
Applying orthogonal changes of variables, we may assume that 
$$
\left<w_j,x_j\right>=\hbox{sign}(\xi)\cdot \|w_j\|\cdot |x_j^{(1)}| \quad \hbox{for $j=1,\ldots, k_1$,}
$$ 
so that we obtain the integral
$$
\int_{\bR^{d_1}\times \cdots\times  \bR^{d_{k_1}}}
\prod_{s=1}^{k_1} \max\big(1,\|x_s\|\big)^{-\ell}\,
\omega\Big(\pi L {\sum}_{j=1}^{k_1} \|w_j\|\cdot |x_j| +|\xi|\Big)\, d\rho_1(x_1)\cdots d\rho_{k_1}(x_{k_1}).
$$
Then the same argument works.
\end{proof}

\begin{remark} \label{rem:general}
Let us consider a slight generalization of the above results.
For $d\ge 2$ and $d_0, d'\ge 1$, we set
\begin{equation}\label{eq:space1_1}
H:=\left(
\begin{tabular}{ccc}
$I_{d_0}$ & $\hbox{Mat}_{d_0,d}(\bR)$ & $\hbox{Mat}_{d_0,d'}(\bR)$ \\
0 & $\hbox{SL}_{d}(\bR)$ & $\hbox{Mat}_{d,d'}(\bR)$ \\
0 & 0 & $I_{d'}$
\end{tabular}
\right)
\quad\hbox{and}\quad \Delta:=H\cap \hbox{SL}_{d+d'+d_0}(\bZ)
\end{equation}
and consider the homogeneous space 
\begin{equation}\label{eq:space2_1}
Z:=H/\Delta.
\end{equation}
Let $Z^\star$ denote the homogeneous space 
obtained from $Z$ by removing the first $d_0$ rows and the first $d_0$ columns.
In fact, the space $Z^\star$ is exactly as in Theorem \ref{th:eq_part0}. Namely, it can be parameterized
by tuples $(\Lambda,v)$ where $\Lambda$ is a unimodular lattice in $\bR^{d}$, and $v\in (\bR^{d}/\Lambda)^{d'}$.
We have the projection map
$$
\pi:Z\to Z^\star
$$
with the fibers
\begin{equation}\label{eq:fibb}
\pi^{-1}(\Lambda,v)\simeq (\bR^{d+d'}/\Lambda_v)^{d_0}.
\end{equation}
Let us take a non-zero element $w\in \hbox{Mat}_{d_0,d}(\bR)\subset H$ such that 
all non-zero components of $w$ are contained in the $i_0$-th row.
We denote by $U_w$ the subgroup of $H$ whose Lie algebra is generated by $E_{i_0,j}$, $j=d_0+1,\ldots,d+d'+d_0$.
We note that $U_w$ is normal in $H$.
The homogeneous subspace $Z_w:=U_w\Delta$ of $Z$ is a $(d+d')$-dimensional torus and 
we denote by $\tau_w$ the probability invariant measure on $Z_w$.
For $\phi\in C_c(Z)$, we define $\phi^\star\in C_c(Z)$ by
$$
\phi^\star(h\Delta):=\int_{Z_w} \phi(hz)\, d\tau_w(z), \quad h\in H.
$$
In this set-up, we can apply the above arguments, in particular,  decompose 
the restriction of the functions to the fibers \eqref{eq:fibb} into Fourier series on $(\bR^{d+d'}/\Lambda_v)^{d_0}$.
Because of our choice of $w$, only the $i_0$-component is affected by the flow,
so that the previous computations require only minor modifications,
and Theorems \ref{th:eq_part0}--\ref{th:eq_part} and Corollary \ref{cor:eq_part} can be proved in this setting as above.

We also note that one can take a non-zero element $w\in \hbox{Mat}_{d,d'}(\bR)\subset H$ such that 
all non-zero components of $w$ are contained in the $j_0$-th row. Clearly, our argument applies in this case as well.
\end{remark}


\section{A proposition about weights} \label{sec:weight}

Let $I\subset \{1,\ldots,m+n\}$ be an admissible subset, i.e. the sets
$$
I^\prime:=I\cap \{1,\ldots,m\}\quad \hbox{and}\quad I^{\prime\prime}:=I\cap \{m+1,\ldots,m+n\}
$$
are both not-empty. Note that $I=I^\prime\sqcup I^{\prime\prime}$.
We define the subsemigroup of $A^+$ as
$$
A^+_I:=\{t\in A^+:\, t_i=0\hbox{ for $i\notin I$}\}.
$$
We consider the adjoint action of $A^+$ on $\hbox{Lie}(U)$.
Then for $t\in A^+$ and the elementary matrices $E_{ij}$ generating $\hbox{Lie}(U)$,
$$
[t,E_{ij}]=\alpha_{ij}(t)E_{ij},
$$
where the weights $\alpha_{ij}$ are given by  $\alpha_{ij}(t)=t_i+t_j$.\\

We say that the weight $\alpha_{ij}$ (or the corresponding weight space $\bR E_{ij}$, or a non-zero vector there)
is \emph{mixing for $I$} if $i\in  I^\prime$ and $j\in  I^{\prime\prime}$.
We note that this condition is equivalent to $\bR E_{ij}\subset \hbox{Lie}(S_I)$, and thus Theorem \ref{th:mixing_I} should explain the terminology.

\begin{proposition}\label{p:weights}
	Let $I_1\ne I_2$ be admissible subsets. Then there exists $\theta>0$ such that 
	for every $s\in A^+_{I_1}$ and $t\in A^+_{I_2}$, either
	there exists a weight $\alpha$ mixing for $I_1$ such that 
	\begin{equation}\label{eq:case1}
		\alpha(s-t)\ge \theta\, \min\big(\lfloor s\rfloor_{I_1}, \lfloor t\rfloor_{I_2}\big),
	\end{equation}
	or 
	there exists a weight $\alpha$ mixing for $I_2$ such that 
	\begin{equation}\label{eq:case2}
		\alpha(t-s)\ge \theta\, \min\big(\lfloor s\rfloor_{I_1}, \lfloor t\rfloor_{I_2}\big).
	\end{equation}
\end{proposition}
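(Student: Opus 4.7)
The plan is to reduce the proposition to bounds on two ``one-sided'' maxima via the separability identity
\[
\max_{(i,j)\in I_1' \times I_1''}\bigl[(s_i - t_i) + (s_j - t_j)\bigr] \;=\; \max_{i \in I_1'}(s_i - t_i) \;+\; \max_{j \in I_1''}(s_j - t_j),
\]
and its analogue for the $I_2$-mixing weights evaluated at $t - s$. I will decompose
$A = I_1' \cap I_2'$, $B = I_1' \setminus I_2'$, $C = I_2' \setminus I_1'$, and similarly $A', B', C'$ for the $I_k''$. Since $I_1 \neq I_2$, at least one of $B, B', C, C'$ is non-empty. Write $M = \min(\lfloor s\rfloor_{I_1}, \lfloor t\rfloor_{I_2})$ and $\sigma = \sum_{i\in I_1'} s_i = \sum_{j\in I_1''} s_j$, $\tau = \sum_{i\in I_2'} t_i = \sum_{j\in I_2''} t_j$, where both equalities come from $s,t \in A^+$.

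The next step is to collect two elementary lower bounds on each of the four one-sided maxima. A \emph{direct} bound: for $i \in B$ one has $s_i \geq M$ and $t_i = 0$, so $\max_{i\in I_1'}(s_i - t_i) \geq M$ whenever $B \neq \emptyset$, and analogously for $B', C, C'$. An \emph{averaging} bound: using $\max \geq \text{average}$ together with the identity $\sum_{i \in A} t_i = \tau - \sum_{i \in C} t_i \leq \tau - |C|\,M$ (which uses the $A^+$-constraint on $t$), one obtains
\[
\max_{i\in I_1'}(s_i - t_i) \;\geq\; \frac{1}{|I_1'|}\Bigl(\sigma - \sum_{i\in A} t_i\Bigr) \;\geq\; \frac{\sigma - \tau + |C|\, M}{|I_1'|},
\]
with analogous inequalities for the other three maxima (involving $|C'|/|I_1''|$, $|B|/|I_2'|$, and $|B'|/|I_2''|$ respectively, and with $\sigma - \tau$ replaced by $\tau - \sigma$ on the $I_2$ side).

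The concluding step is a case split on the sign of $\sigma - \tau$. Suppose $\sigma \geq \tau$ (the case $\tau > \sigma$ is symmetric, swapping the roles of $(I_1, s, t)$ and $(I_2, t, s)$ and yielding \eqref{eq:case2}). All four averaging bounds on the $I_1$ side are then non-negative. If $B \cup B' \neq \emptyset$, a direct bound on one summand (say $\max_{i \in I_1'}(s_i - t_i) \geq M$ when $B \neq \emptyset$) combined with non-negativity of the other gives $\max_{\alpha \text{ mixing for } I_1} \alpha(s-t) \geq M$. Otherwise $B = B' = \emptyset$, and since $I_1 \neq I_2$ we must have $C \cup C' \neq \emptyset$; summing the two averaging bounds then yields
\[
\max_{(i,j)\in I_1' \times I_1''} \alpha_{ij}(s-t) \;\geq\; \frac{|C|\, M}{|I_1'|} + \frac{|C'|\, M}{|I_1''|} \;\geq\; \frac{M}{m+n}.
\]
This establishes \eqref{eq:case1} with $\theta = 1/(m+n)$ uniformly in $s,t$.

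The only genuine obstacle is the degenerate subcase $B = B' = \emptyset$ (equivalently, after the $\sigma$-vs-$\tau$ reduction, $I_1 \subsetneq I_2$): here no coordinate of $s - t$ is automatically positive and of order $M$, so a pointwise/direct argument cannot suffice. The averaging bound, coupled with the $A^+$-identity that forces $\sum_{i \in C} t_i \geq |C|\,M$, is what converts the non-emptiness of $C \cup C'$ into an order-$M$ gap on a weight mixing for $I_1$; everything else is bookkeeping on the four subsets $A, B, C, A', B', C'$.
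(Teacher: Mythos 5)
Your proof is correct, and it takes a genuinely different and noticeably cleaner route than the paper's. The paper argues by an exhaustive case analysis on which of the four set differences $I_1'\setminus I_2'$, $I_2'\setminus I_1'$, $I_1''\setminus I_2''$, $I_2''\setminus I_1''$ are empty (six cases), and within each case runs a contrapositive scheme: assume every candidate weight fails by $\eps\,\Pi(s,t)$, sum the resulting inequalities against the trace identities $\sum_{I_k'}=\sum_{I_k''}$, and derive that $\Pi(s,t)=0$ for $\eps$ small enough; the final $\theta$ is the implicit $\eps$ surviving all cases. You instead isolate the one structural fact that makes the problem tractable --- the max of $\alpha_{ij}(s-t)$ over mixing weights for $I_1$ splits as $\max_{I_1'}(s_i-t_i)+\max_{I_1''}(s_j-t_j)$ --- and then need only two one-line lower bounds per factor (the direct bound on $B,B',C,C'$ and the averaging bound via the same trace identities), organized by a single dichotomy on $\sgn(\sigma-\tau)$. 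This buys a shorter argument, an explicit constant $\theta=1/(m+n)$ rather than an unspecified one, and automatic handling of the degenerate case $M=0$ (which the paper disposes of separately via ``the proposition holds trivially''). The only cosmetic slip is the phrase ``all four averaging bounds on the $I_1$ side'' --- there are two per side --- but the inequalities you actually use are the right ones.
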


\begin{proof}
	For simplicity we write 
    $$
    \Pi(s,t):=\min\big(\lfloor s\rfloor_{I_1}, \lfloor t\rfloor_{I_2}\big)
    $$ 
    and use the following notations:
	\begin{align*}
		D_1 \prec_\eps D_2 \;&\Longleftrightarrow \; D_1\le D_2+ \eps\, \Pi(s,t),\\
		D_1 \succ_\eps D_2 \;&\Longleftrightarrow \; D_1\ge D_2- \eps\, \Pi(s,t),\\
   		D_1 \approx_\eps D_2\;& \Longleftrightarrow\; |D_1-D_2|\le \eps\, \Pi(s,t).
	\end{align*}
	The proof is subdivided into several cases:
	
	\vspace{0.2cm}
	
	\noindent {\sc Case 1:} {\it $I_1^\prime \backslash I_2^\prime\ne \emptyset$ and $I_1^{\prime\prime} \backslash I_2^{\prime\prime}\ne \emptyset$}.\\

	We take $i\in I_1^\prime \backslash I_2^\prime$ and $j\in I_1^{\prime\prime} \backslash I_2^{\prime\prime}$. Then
	$$
	\alpha_{ij}(s-t)=s_i+s_j,
	$$
	which immediately implies the proposition.
	
	\vspace{0.2cm}

	\noindent {\sc Case 2:} {\it $I_2^\prime \backslash I_1^\prime\ne \emptyset$ and $I_2^{\prime\prime} \backslash I_1^{\prime\prime}\ne \emptyset$}. \\

This can be handled as in {\sc Case 1.} \\

\noindent The complement of {\sc Cases 1--2} can be subdivided into the following sub-cases:
 	\begin{itemize}
		\item {\sc Case 3:} $I_1^\prime \backslash I_2^\prime= \emptyset$ and $I_2^{\prime\prime} \backslash I_1^{\prime\prime}= \emptyset$ $\Longleftrightarrow$ $I_1^\prime \subseteq I_2^\prime$ and $I_2^{\prime\prime} \subseteq I_1^{\prime\prime}$,
		\item {\sc Case 4:} $I_1^\prime \backslash I_2^\prime= \emptyset$ and $I_2^\prime \backslash I_1^\prime= \emptyset$ $\Longleftrightarrow$  $I_1^\prime =I_2^\prime$, 
		\item {\sc Case 5:} $I_2^\prime \backslash I_1^\prime= \emptyset$  and $I_1^{\prime\prime} \backslash I_2^{\prime\prime}= \emptyset$  $\Longleftrightarrow$ $I_2^\prime \subseteq I_1^\prime$ and $I_1^{\prime\prime} \subseteq I_2^{\prime\prime}$,
		\item {\sc Case 6:} $I_1^{\prime\prime} \backslash I_2^{\prime\prime}= \emptyset$ and $I_2^{\prime\prime} \backslash I_1^{\prime\prime}= \emptyset$ $\Longleftrightarrow$ $I_1^{\prime\prime} = I_2^{\prime\prime}$. 
	\end{itemize}
	We only discuss {\sc Cases 3--4} since the remaining cases can be treated similarly.\\
	
	Let us fix $\eps >0$, to be specified later, chosen independently of $s$ and $t$, but will depend on $m$ and $n$.

	\vspace{0.2cm}
	
	\noindent {\sc Case 3:} {\it $I_1^\prime \subseteq I_2^\prime$ and $I_2^{\prime\prime} \subseteq I_1^{\prime\prime}$}.\\

 We may additionally assume that $I_1^\prime \subsetneq I_2^\prime$ and $I_2^{\prime\prime} \subsetneq I_1^{\prime\prime}$ because
 otherwise we land either in {\sc Case 4} or {\sc Case 6}. \\
	
	
	Take $i\in I_1^\prime$ and $k\in I_1^{\prime\prime}\backslash I_2^{\prime\prime}$.
	If 
	$(s_i+s_k)-t_i\ge \eps\, \Pi(s,t)$, we can take the weight $\alpha_{ik}$ to obtain \eqref{eq:case1}, so that we may assume that 
	\begin{equation}\label{eq:12}
		s_i+s_k\prec_\eps t_i\quad \hbox{for all $i\in I_1^\prime$ and $k\in I_1^{\prime\prime}\backslash I_2^{\prime\prime}$}.
	\end{equation}
	
	Take $l\in I_2^\prime\backslash I_1^\prime$ and $j\in I_2^{\prime\prime}$.
	If 
	$(t_l+t_j)-s_j\ge \eps\, \Pi(s,t)$, we can take the weight $\alpha_{lj}$ to obtain \eqref{eq:case2}, so that we may assume that 
	\begin{equation}\label{eq:13}
		t_l+t_j\prec_\eps s_j\quad \hbox{for all $l\in I_2^\prime\backslash I_1^\prime$ and $j\in I_2^{\prime\prime}$}.
	\end{equation}
	
	When $t_j\succ_\eps s_j$ for some $j\in I_2^{\prime\prime}$, then 
	it follows from \eqref{eq:12} that for $i\in I_1^\prime$ and $k\in I_1^{\prime\prime}\backslash I_2^{\prime\prime}$, 
	$$
	(t_i+t_j)-(s_i+s_j)\succ_\eps s_k+(t_j-s_j) \succ_\eps s_k\ge \Pi(s,t).
	$$
	Taking $\eps<1/3$, we obtain \eqref{eq:case2} with the weight $\alpha_{ij}$.
	Now we assume that 
	\begin{equation}\label{eq:14}
		t_j\le s_j-\eps\, \Pi(s,t)\quad\hbox{for all $j\in I_2^{\prime\prime}$.}
	\end{equation}
	
	When $s_i\succ_\eps t_i$ for some $i\in I_1^{\prime}$, then 
	it follows from \eqref{eq:13} that for $j\in I_2^{\prime\prime}$ and $l\in I_2^{\prime}\backslash I_1^{\prime}$, 
	$$
	(s_i+s_j)-(t_i+t_j)\succ_\eps t_l+(s_i-t_i) \succ_\eps t_l\ge \Pi(s,t).
	$$
	Taking $\eps<1/3$, we obtain \eqref{eq:case1}.
	Now we assume that 
	\begin{equation}\label{eq:15}
		s_i\le t_i-\eps\, \Pi(s,t)\quad\hbox{for all $i\in I_1^{\prime}$.}
	\end{equation}
	
	\noindent From \eqref{eq:14}, we obtain 
	\begin{equation}\label{eq:16}
		{\sum}_{j\in I_2^{\prime\prime}} t_j \le  {\sum}_{j\in I_2^{\prime\prime}} s_j-|I_1^{\prime\prime}|\eps\, \Pi(s,t)
	\end{equation}
	From \eqref{eq:15}, we obtain 
	\begin{equation}\label{eq:17}
		{\sum}_{i\in I_1^{\prime}} s_i \le  {\sum}_{i\in I_1^{\prime}} t_i-|I_1^{\prime}|\eps\, \Pi(s,t).
	\end{equation}
	Finally, we use that
	$$
	{\sum}_{i\in I_1^{\prime}} s_i={\sum}_{j\in I_1^{\prime\prime}} s_j
	\quad\hbox{and}\quad
	{\sum}_{i\in I_2^{\prime}} t_i={\sum}_{j\in I_2^{\prime\prime}} t_j.
	$$
	Then it follows from \eqref{eq:16} and \eqref{eq:17} that 
	$$
	{\sum}_{i\in I_2^{\prime}} t_i \le {\sum}_{i\in I_1^{\prime}} t_i -(|I_1^{\prime\prime}|+|I_1^{\prime}|)\eps\, \Pi(s,t).
	$$
    Since $I_1^\prime \subsetneq I_2^\prime$, this is only possible when $t_i=0$ for $i\in I_2^\prime\backslash I_1^\prime$ and $\Pi(s,t)=0$.
    In this case, the proposition holds trivially. \\

	\noindent {\sc Case 4:} {\it $I_1^\prime =I_2^\prime$}. \\

	Take $i\in I_1^\prime$ and $j\in I_1^{\prime\prime}\cap I_2^{\prime\prime}$. If 
	$(s_i+s_j)-(t_i+t_j)\ge \eps\, \Pi(s,t)$, we can take the weight $\alpha_{ij}$ to obtain \eqref{eq:case1}. Also if 
	$(t_i+t_j)-(s_i+s_j)\ge \eps\, \Pi(s,t)$, we can take the weight $\alpha_{ij}$ to obtain \eqref{eq:case2}. Hence, it remains to consider the case when
	\begin{equation}\label{eq:11b}
		s_i+s_j \approx_\eps t_i+t_j \quad \hbox{for all $i\in I_1^\prime$ and $j\in I_1^{\prime\prime}\cap I_2^{\prime\prime}$}.
	\end{equation}
	
	Take $i\in I_1^\prime$ and $k\in I_1^{\prime\prime}\backslash I_2^{\prime\prime}$.
	If 
	$(s_i+s_k)-t_i\ge \eps\, \Pi(s,t)$, we can take the weight $\alpha_{ik}$ to obtain \eqref{eq:case1}, so that we may assume that 
	\begin{equation}\label{eq:12b}
		s_i+s_k\prec_\eps t_i\quad \hbox{for all $i\in I_1^\prime$ and $k\in I_1^{\prime\prime}\backslash I_2^{\prime\prime}$}.
	\end{equation}
	
	Take $i\in I_1^\prime$ and $l\in I_2^{\prime\prime}\backslash I_1^{\prime\prime}$.
	If 
	$(t_i+t_l)-s_i\ge \eps\, \Pi(s,t)$, we can take the weight $\alpha_{il}$ to obtain \eqref{eq:case2}, so that we may assume that 
	\begin{equation}\label{eq:13b}
		t_i+t_l\prec_\eps s_i\quad \hbox{for all $i\in I_1^\prime$ and $l\in I_2^{\prime\prime}\backslash I_1^{\prime\prime}$}.
	\end{equation}
	
 Let us suppose that $I_1^{\prime\prime}\backslash I_2^{\prime\prime}\ne\emptyset$ and 
	$I_2^{\prime\prime}\backslash I_1^{\prime\prime}\ne\emptyset$.
We write
$$
\Sigma_1:={\sum}_{i\in I_1'} s_i={\sum}_{j\in I_1''} s_j\quad\hbox{and}\quad \Sigma_2:={\sum}_{i\in I_2'} t_i={\sum}_{j\in I_2''} t_j.
$$
Recall that $I_1'=I_2'$. 
Let $k\in I_1^{\prime\prime}\backslash I_2^{\prime\prime}$ and 
	$l\in I_2^{\prime\prime}\backslash I_1^{\prime\prime}$.
Summing \eqref{eq:12b} and \eqref{eq:13b}, we obtain:
$$
\Sigma_1+|I_1'|\,s_k \prec_{n\eps} \Sigma_2\quad\hbox{and}\quad \Sigma_2+ |I_1'|\,t_l\prec_{n\eps} \Sigma_1.
$$
Hence, it follows that 
$$
|I_1'|\,s_k + |I_1'|\,t_l\prec_{2n\eps} 0.
$$
Taking $\eps <1/(2n)$, we conclude that that $s_k=t_l=0$.
Then $\Pi(s,t)=0$ and the proposition follows trivially. \\

Now we can assume that 
either $I_1^{\prime\prime}\subset I_2^{\prime\prime}$ and $I_2^{\prime\prime}\subset I_1^{\prime\prime}$.
We only discuss the case when $I_1^{\prime\prime}\subset I_2^{\prime\prime}$ because 
the other case can be treated similarly.
Since $I_1\ne I_2$ and $I'_1=I'_2$, we also have $I_1^{\prime\prime}\subsetneq I_2^{\prime\prime}$.
It follows from \eqref{eq:11b} that
$$
|I_1^{\prime\prime}|\sum_{i\in I_1^\prime} s_i+|I_1^\prime|\sum_{j\in I_1^{\prime\prime}} s_j \approx_{mn\eps} |I_1^{\prime\prime}|\sum_{i\in I_1^\prime}t_i+|I_1^\prime|\sum_{j\in I_1^{\prime\prime}}t_j .
$$
Therefore,
\begin{equation}\label{eq:ll1}
\Sigma_1 \approx_{mn\eps} \frac{|I_1^{\prime\prime}|}{|I_1|}\sum_{i\in I_1^\prime}t_i+\frac{|I_1^{\prime}|}{|I_1|}\sum_{j\in I_1^{\prime\prime}}t_j=\Sigma_2-\frac{|I_1^{\prime}|}{|I_1|}\sum_{j\in I_2^{\prime\prime}\backslash I_1^{\prime\prime}}t_j.
\end{equation}
Also from \eqref{eq:13b}, we derive that
\begin{equation}\label{eq:ll2}
\sum_{i\in I_1^\prime} t_i+|I_1^{\prime}| t_l\prec_{n\eps} \sum_{i\in I_1^\prime}s_i=\Sigma_1 
\end{equation}
for $l\in I_2^{\prime\prime}\backslash I_1^{\prime\prime}$.
Combining \eqref{eq:ll1} and \eqref{eq:ll2}, we conclude that
$$
\Sigma_2+|I_1^{\prime}| t_l\prec_{(m+1)n\eps} 
\Sigma_2-\frac{|I_1^{\prime}|}{|I_1|}\sum_{j\in I_2^{\prime\prime}\backslash I_1^{\prime\prime}}t_j,
$$
so that 
$$
t_\ell
\prec_{(m+1)n\eps} 0.
$$
When we take $\eps< 1/((m+1)n)$, this implies that
$t_\ell=0$ for $\ell\in I_2^{\prime\prime}\backslash I_1^{\prime\prime}$.
Then $\Pi(s,t)=0$ and the proposition follows trivially.
This completes analysis of {\sc Case 4}. Since the other cases
can be treated similarly, this concludes the proof
of the proposition.
\end{proof}

\section{Multiple equidistribution} \label{sec:multi}

In this section, we prove the following multiple equidistribution result, which generalizes Theorem \ref{th:bg}.

\begin{theorem}
	\label{th:multi_gen}
	Let $I_1,\ldots,I_r\subset \{1,\ldots,m+n\}$
 be admissible subsets.
 There exist $\ell\ge 1$ and $\delta_r>0$ such that 
	for every $f\in C_c^\infty(Y)$, $\varphi_1,\ldots,\varphi_r\in C^\infty_c(X)$, and $t_1 \in A^{+}_{I_1}$, $\ldots$, $t_r \in A^{+}_{I_r}$,  
	\begin{align*}
	\int_Y f \left(\prod_{s=1}^r \varphi_s \circ a(t_s)\right)\, d\nu=&
	\left( \int_Y f\, d\nu \right)
	 \prod_{s=1}^r \int_{X_{I_s}} \varphi_s \,  d\mu_{I_s} \\
	&\quad+O\left(e^{-\delta_r \min\big(\lfloor t_1 \rfloor_{I_1},\ldots,  \lfloor t_r \rfloor_{I_r},\Delta(t_1,\ldots,t_r)\big)} \,  \|f\|_W{\prod}_{s=1}^r \|\varphi_s\|_{C^\ell} \right).
	\end{align*}
\end{theorem}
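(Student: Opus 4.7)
The argument proceeds by induction on $r$, with base case $r=1$ furnished directly by Theorem~\ref{th:km_new}. For the inductive step, I would first use the Wiener expansion $f = \sum_{\xi \in \Xi_Y} \widehat{f}(\xi)\, \xi$ to reduce matters to estimating, uniformly in $\xi \in \Xi_Y$, integrals of the form
\[
I_\xi := \int_Y \xi \prod_{s=1}^r (\varphi_s \circ a(t_s))\, d\nu,
\]
summation against $\sum_\xi |\widehat f(\xi)|$ then producing the factor $\|f\|_W$. Writing $c_s := \int_{X_{I_s}} \varphi_s\, d\mu_{I_s}$ and $\tilde\varphi_s := \varphi_s - c_s$, expanding $\prod_s (c_s + \tilde\varphi_s \circ a(t_s))$ and extracting the main term $(\int_Y \xi\, d\nu) \prod_s c_s$ reduces the problem to showing that, for every nonempty $A \subseteq \{1,\ldots,r\}$ and every family of $\tilde\varphi_s$ mean-zero on $X_{I_s}$,
\[
\Bigl| \int_Y \xi \prod_{s \in A} (\tilde\varphi_s \circ a(t_s))\, d\nu \Bigr| \ll e^{-\delta_r \Theta} \prod_{s \in A} \|\tilde\varphi_s\|_{C^\ell}
\]
holds uniformly in $\xi$, where $\Theta := \min\bigl(\lfloor t_1\rfloor_{I_1}, \ldots, \lfloor t_r\rfloor_{I_r}, \Delta(t_1,\ldots,t_r)\bigr)$.

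For $|A|=1$ this is immediate from Theorem~\ref{th:km_new} (applied with $\xi$ in place of $f$, noting $|\xi| \equiv 1$, and using $\lfloor t_j \rfloor_{I_j} \geq \Theta$). The heart of the argument is the case $|A| \geq 2$: pick $i \neq j$ in $A$ and apply Proposition~\ref{p:weights} to the pair $(t_i, t_j)$ to obtain a weight $\alpha = \alpha_{pq}$ mixing for---say---$I_i$, with
\[
\alpha(t_i - t_j) \;\geq\; \theta \min(\lfloor t_i\rfloor_{I_i}, \lfloor t_j\rfloor_{I_j}) \;\geq\; \theta\,\Theta.
\]
That $\alpha$ is mixing for $I_i$ is equivalent to $E_{pq} \in \mathrm{Lie}(U \cap S_{I_i})$: since $E_{pq} \in \mathrm{Lie}(U)$, the one-parameter subgroup $\exp(uE_{pq})$ preserves both $Y$ and $\nu$, enabling a van~der~Corput thickening; since $E_{pq} \in \mathrm{Lie}(S_{I_i})$, this same subgroup is an admissible direction for the decorrelation estimate in $X_{I_i}$ given by Corollary~\ref{cor:decor}.

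The final step is to thicken the integral: rewrite it as an $L$-average over $u \in [0, L]$ of its translate by $\exp(uE_{pq})$, conjugate $\exp(uE_{pq})$ through each $a(t_s)$ (producing the scaled element $\exp(u\, e^{\alpha(t_s)}\, E_{pq})$ in the $s$-th slot), and then apply a Cauchy--Schwarz in $u$ combined with the mean decorrelation estimates of Section~\ref{sec:decor} (Theorem~\ref{th:eq_part} and Corollary~\ref{cor:eq_part}). This reduces the bound to a two-point correlation within $X_{I_i}$ at effective scale $L\, e^{\alpha(t_i - t_j)}$; the mean-zero condition on $\tilde\varphi_i$ kills the main term, leaving an exponentially small residual. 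The remaining $|A|-1$ factors are then absorbed by the inductive hypothesis.

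The main obstacle, and the reason Section~\ref{sec:decor} was needed in the first place, is balancing the thickening scale $L$: it must be large enough for $(L\, e^{\alpha(t_i-t_j)})^{-\tau}$ to be small, yet small enough that the Lipschitz perturbations $L\, e^{\alpha(t_k)}$ incurred by translating the other factors ($k \in A \setminus \{i\}$, where $E_{pq}$ may fail to be unstable in $X_{I_k}$) remain negligible compared to the decay just obtained. Proposition~\ref{p:weights} is precisely the ingredient that guarantees the gap $\alpha(t_i - t_j)$ is always favorable enough relative to the other $\alpha(t_k)$'s for such balancing to succeed; the refined joint mean decorrelation estimates of Section~\ref{sec:decor} are what make the Cauchy--Schwarz step effective when $E_{pq}$ has nontrivial action across several $X_{I_s}$ factors simultaneously.
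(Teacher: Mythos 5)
Your proposal follows the same overall route as the paper's proof: Wiener expansion in $\xi$, Proposition \ref{p:weights} to produce a mixing direction $w=E_{pq}$ with a large gap, thickening of the integral along $\exp(uw)$, Cauchy--Schwarz in $u$, the mean decorrelation estimates of Section \ref{sec:decor}, and induction on the number of factors; the mean-zero decomposition $\varphi_s=c_s+\tilde\varphi_s$ is a harmless repackaging. The genuine gap is in how you balance the thickening scale $L$ against the factors other than $i$ and $j$. Proposition \ref{p:weights} controls only the pair $(t_i,t_j)$ and says nothing about where the remaining $\alpha(t_s)$ sit, so it does not, as you claim, guarantee that the balancing succeeds. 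After conjugation, factor $s$ sees the flow at scale $Le^{\alpha(t_s)}$, and every factor must fall into one of two regimes: either $Le^{\alpha(t_s)}$ is small enough to freeze that factor by a Lipschitz estimate, or it is large enough to enter the decorrelation machinery. If some $\alpha(t_s)$ lies between $\alpha(t_j)$ and $\alpha(t_i)$ with no large gap on either side, no single choice of $L$ achieves this split. The paper resolves this by ordering $\alpha(t_1)\ge\cdots\ge\alpha(t_r)$ and pigeonholing a gap $\alpha(t_k-t_{k+1})\ge\theta D/r$ between the two indices supplied by Proposition \ref{p:weights} (see \eqref{eq:gap2}); even then the chosen cut may place the mixing index on the frozen side, or $Le^{\alpha(t_1)}$ may be too large for the Sobolev-norm growth in the inductive step, whence the Case A/Case B iteration of Step IV in which one repeatedly shrinks the window and replaces the treated functions by their $U_{I_s,w}$-invariant averages $\varphi_s^\star$. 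Relatedly, the mean-zero condition kills the main term only if the mixing factor sits inside the Cauchy--Schwarz: for a non-mixing factor the limit of the two-point average is $\mu_{I_s}(\varphi_s^\star\overline{\varphi_s^\star})$ with $\varphi_s^\star$ the fiber average over $U_{I_s,w}\Gamma$, which does not vanish for functions of mean zero on $X_{I_s}$.

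Two further points. The inductive hypothesis is not used to ``absorb the remaining $|A|-1$ factors'' (those are frozen by sup-norm and Lipschitz bounds); it is needed \emph{before} the mean decorrelation can be applied, to convert the diagonal integral $\nu\big((|\Psi_{\xi,L}|^2\circ a(t))|_{diag(X^k)}\big)$ produced by Cauchy--Schwarz into the product-measure quantity $\mu^{(k)}(|\Psi_{\xi,L}|^2)$; this is the paper's Step II, it is what forces the number of thickened factors to be strictly less than $r$, and it is the source of the constraint that $Le^{\alpha(t_1)}$ not be too large. Finally, Proposition \ref{p:weights} requires $I_1\ne I_2$, so the case $I_1=\cdots=I_r$ is not covered by your mechanism and must be treated separately, as the paper does by the method of \cite{BG2} using that the adjoint action of $A_I$ on the span of the mixing directions is proper.
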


To simplify notation, we write
$$
D(t_1,\ldots,t_r):=\min\big(\lfloor t_1 \rfloor_{I_1},\ldots,  \lfloor t_r \rfloor_{I_r},\Delta(t_1,\ldots,t_r)\big).
$$

When $I_1=\cdots =I_r$, Theorem \ref{th:multi_gen} can be proved using the method of \cite{BG2} with minor modifications.
Indeed, let us consider the span $W_I$ of mixing directions $E_{ij}$, namely,
with indices satisfying $i\in I\cap \{1,\ldots,m\}$ and $j\in I\cap \{m+1,\ldots,m+n\}$.
Since the adjoint action of $A_I$ on $W_I$ is proper,
we can run exactly the same argument as in  \cite{BG2}
by considering only one-parameter subgroups $\{\exp(uw)\}_{u\in\mathbb{R}}$ with $w\in W_I$.
Since those one-parameter subgroups satisfy the required mixing estimate
by Theorem \ref{th:mixing_I}, the original argument is applicable.
The base case with $r=1$, which is needed to start this argument, was handled in Theorem \ref{th:km_new}.

\medskip

Now we suppose that some of the sets $I_{s}$ are distinct.
Then by Proposition \ref{p:weights}, there exist
a constant $\theta>0$,  $s_1\ne s_2\in \{1,\ldots,r\}$, and a direction $w=E_{i_0j_0}\in\hbox{Lie}(U)$ with the weight $\alpha=\alpha_{i_0j_0}$ such that 
\begin{equation}
	\label{eq:gap}
\alpha(t_{s_2}-t_{s_1})\ge \theta \, D(t_1,\ldots.t_r)
\end{equation}
and $w$ is mixing for $I_{s_2}$. We change the indexation so that 
\begin{equation}
	\label{eq:aaa}
\alpha(t_1)\ge \alpha(t_2)\ge \cdots \ge \alpha(t_r).
\end{equation}
Then $s_1>s_2$. It follows from \eqref{eq:gap} that
there exists $k=s_2,\ldots, s_1-1$ such that 
\begin{equation}
	\label{eq:gap2}
\alpha(t_{k}-t_{k+1})\ge \theta \, D(t_1,\ldots.t_r)/r.
\end{equation}
From now on, we fix the direction $w=E_{i_0j_0}$ and the index $k$.
It is important for our argument later that for some $s=1,\ldots,k$,
the direction $w$ is mixing for $I_s$. 

\medskip

For smooth functions $f$ on $Y$, we have the uniformly convergent expansion as in \eqref{eq:wiener}:
\[
f = \sum_{\xi \in \Xi_Y} \widehat f(\xi) \xi.
\]
Then
\begin{align}
	\int_Y f\left(\prod_{s=1}^r \varphi_s \circ a(t_s)\right)\, d\nu
	=&\left(\int_Y f\, d\nu\right) \int_Y\left(\prod_{s=1}^r \varphi_s \circ a(t_s)\right)\, d\nu \nonumber\\
	&\quad\quad\quad+\sum_{\xi\ne 1} \widehat f(\xi) \int_Y\xi \left(\prod_{s=1}^r \varphi_s \circ a(t_s)\right)\, d\nu.\label{eq:gen}
\end{align}

Now we proceed with estimating the integrals
$$
I(\xi):=\int_{Y} \xi \left(\prod_{s=1}^r \varphi_s\circ a(t_s)\right)\, d\nu\quad\quad \hbox{with $\xi \in \Xi_Y.$}
$$
We claim that for some $\ell\ge  1$ and $\delta_r>0$,
\begin{align}\label{eq:claim}
I(1)&=\prod_{s=1}^r \int_{X_{I_s}} \varphi_s \,  d\mu_{I_s} + O\left(e^{\delta_r D(t_1,\ldots,t_k)}{\prod}_{s=1}^r \|\varphi_s\|_{C^\ell}\right),\nonumber\\
I(\xi) &= O\left(e^{\delta_r D(t_1,\ldots,t_k)}{\prod}_{s=1}^r \|\varphi_s\|_{C^\ell}\right),\quad \hbox{for $\xi\ne 1$},
\end{align}
uniformly on $\xi$.
Once this claim is established, Theorem \ref{th:multi_gen}
follows directly from \eqref{eq:gen}.
The proof proceeds by induction on $r$. \\

We use that the measure $\nu$ is invariant under the one-parameter subgroup
$\{\exp(uw)\}_{u\in\bR}$, so that for any $u\in\bR$,
\begin{align*}
I(\xi) =& \int_Y \xi\circ \exp(uw) \left( \prod_{s=1}^r \varphi_s\circ a(t_s)\circ \exp(uw)\right)\, d\nu\\
	=& \xi(\exp(uw))  \int_Y \xi \left(\prod_{s=1}^r \varphi_s\circ \exp(u w^{(s)})\circ a(t_s)\right)\, d\nu,
\end{align*}
where 
$$
w^{(s)}: = \Ad(a(t_s))w=e^{\alpha(t_s)}w.
$$
In particular, it follows from \eqref{eq:aaa} that
$$
\|w^{(1)}\|\ge \|w^{(2)}\|\ge\cdots \ge \|w^{(r)}\|.
$$
Integrating over $u\in [0,L]$, we deduce that for any $L>0$,
\begin{align}\label{eq:average}
	I(\xi) &= \int_{Y}  \xi \left(  \frac{1}{L}\int_0^L \xi(\exp(uw)) \prod_{s=1}^r \varphi_s\circ \exp(u w^{(s)})\circ a(t_s)\, du\right)\, d\nu.
\end{align}
We choose the parameter $L$ so that $L\|w^{(s)}\|$ is "large" for $s\le k$
 and "small" for $s\ge k+1$. The exact value of $L$ will be given at the end of the argument.

\medskip
\noindent{\sc Step I:} {\it  
Controlling the components	$\exp(u w^{(s)})$ with $s=k+1,\ldots, r$.
}

\vspace{0.1cm}

Let us consider a related integral
\begin{align}
	I_1(\xi) :=& 
	\int_{Y} \xi \left(\frac{1}{L}\int_0^L \xi(\exp(uw))\prod_{s=1}^k \varphi_s\circ \exp(uw^{(s)})\circ a(t_s)\, du\right) \left(   \prod_{s=k+1}^r   \varphi_s\circ a(t_s)\right)\, d\nu.
\end{align}
Since $|\xi|=1$, we obtain that
\begin{align}\label{i}
	|I(\xi)-I_{1}(\xi)| \leq &\prod_{s=1}^k \|\varphi_s\|_{C^0}  \\
	&\quad\quad\times \sup_{u \in [0,L]}\Big\|\prod_{s=k+1}^r \varphi_s \circ \exp (u w^{(s)}) \circ a(t_s) - \prod_{s=k+1}^r \varphi_s \circ a(t_s)\Big\|_{C^0}, \nonumber 
\end{align}
and by the triangle inequality,
\begin{align}\label{eq:almost}
	&\Big\|\prod_{s=k+1}^r \varphi_s \circ \exp(u w^{(s)}) \circ a(t_s) - \prod_{s=k+1}^r \varphi_s \circ a(t_s)\Big\|_{C^0}\\
	=&\,
	\Big\|\prod_{s=k+1}^r \varphi_s \circ \exp (u w^{(s)}) - \prod_{s=k+1}^r \varphi_s\Big\|_{C^0} \nonumber\\
	\le\; & \sum_{l=k+1}^{r}  
	\Big\|\Big(\prod_{s=k+1}^l \varphi_s \circ \exp (u w^{(s)})\Big) \prod_{s=l+1}^r \varphi_s - 
	\Big( \prod_{s=k+1}^{l-1} \varphi_s \circ \exp (u w^{(s)})\Big) \prod_{s=l}^r \varphi_s
	\Big\|_{C^0}\nonumber\\
	\le\; & \sum_{l=k+1}^{r}  \big\|\varphi_l \circ \exp (u w^{(l)}) - \varphi_l\big\|_{C^0} \prod_{k+1\le s\le r, s\ne \ell } \|\varphi_s\|_{C^0}\nonumber\\
	\ll&  L\|w^{(k+1)}\|\, {\prod}_{s=k+1}^r \|\varphi_s\|_{C^1}.\nonumber
\end{align}
Therefore, we conclude that
\begin{equation}\label{eq:step1_0}
|I(\xi)-I_{1}(\xi)| \ll L\|w^{(k+1)}\|\, {\prod}_{s=1}^r \|\varphi_s\|_{C^1}.
\end{equation}

\medskip
\noindent{\sc Step II:} {\it  	Applying the inductive hypothesis.}

\vspace{0.1cm}

We use notation: $I^{(1)}:=I\cup \{m+1,\ldots,m+n\}$ and $I^{(2)}:=I\cup \{1,\ldots, m\}$

When $w=E_{i_0j_0}$ is not mixing with respect to a set $I$,
it is convenient to introduce a unipotent subgroup $U_{I,w}$,
which is defined as follows:
\begin{itemize}
\item If $i_0\in I$  and $j_0\notin I$, 
the Lie algebra of $U_{I,w}$ is spanned by $E_{ij_0}$ with $i\in I^{(2)}$.
\item If $i_0\notin I$  and $j_0\in I$, 
the Lie algebra of $U_{I,w}$ is spanned by $E_{i_0 j}$ with $j\in I^{(1)}$.
\item If $i_0\notin I$  and $j_0\notin I$, 
the Lie algebra of $U_{I,w}$ is spanned by $E_{i_0 j_0}$.
\end{itemize}
We note that the spaces $X_I$ are isomorphic to the spaces from Remark \ref{rem:general}.
Indeed, in the first two cases, the action $\exp(\bR w)$ on $X_I$ corresponds
to one-parameter subgroups as in Remark \ref{rem:general}.
In the last case, since $U_{I,w}$ lies in the center of $G_I$,
the space $X_I$ is a bundle with fibers $gU_{I,w}\Gamma$ and 
the action on the fibers is given by 
$$
gz\mapsto g\exp(uw)z,\quad u\in\bR,\; z\in U_{I,w}\Gamma,
$$
so that we obtain the circle rotation which corresponds to the case $d=1$ in Section \ref{sec:decor}.
Hence, the results of Section \ref{sec:decor} are applicable in our setting.

We denote by $\nu_{I,w}$ the invariant probability measure supported on  $U_{I,w}\Gamma$.
When $w$ is not mixing for $I_s$, we set
$$
\varphi^\star_s(g\Gamma):=\int_{U_{I_s,w}\Gamma} \varphi_s(gy)\, d\nu_{I_s,w}(y).
$$
We note that $\varphi^\star_s$ is $\exp(\bR w)$-invariant,
and  $\int_{X_{I_s}} \varphi^\star_s\, d\mu_{I_s}=\int_{X_{I_s}} \varphi_s\, d\mu_{I_s}$.

When $w$ is mixing for $I_s$, we set
$$
\varphi^\star_s(g\Gamma):=\int_{X_{I_s}} \varphi_s\, d\mu_{I_s}.
$$

Let
\begin{align*}
	I_2(\xi) :=& 
	\int_{Y} \xi \left(\frac{1}{L}\int_0^L \xi(\exp(uw))\prod_{s=1}^k \varphi^\star_s\circ \exp(uw^{(s)})\circ a(t_s)\, du\right) \left(   \prod_{s=k+1}^r   \varphi_s\circ a(t_s)\right)\, d\nu\\
 \end{align*}
 and
\begin{align*}
I_3(\xi):=& 
	\int_{Y} \xi \left(\prod_{s=1}^k \varphi^\star_s\circ a(t_s)\right) \left(   \prod_{s=k+1}^r   \varphi_s\circ a(t_s)\right)\, d\nu.
\end{align*}
Arguing as in {\sc Step I}, we obtain that
\begin{equation}\label{eq:step1_0_0}
|I_2(\xi)-I_3(\xi)| \ll L\|w^{(k+1)}\|\, {\prod}_{s=1}^r \|\varphi_s\|_{C^1}.
\end{equation}
Since the functions $\varphi_s^\star$ are $\exp(\bR w)$-invariant,
$$
I_2(\xi) = 
	\int_{Y} \xi \left(\frac{1}{L}\int_0^L \xi(\exp(uw))\prod_{s=1}^k \varphi^\star_s\circ a(t_s)\, du\right) \left(   \prod_{s=k+1}^r   \varphi_s\circ a(t_s)\right)\, d\nu.
 $$

Let us introduce a function ${\Psi}_{\xi,L} : X^k \ra \bR$ defined by
\begin{align}
	\label{psitildeL}
	{\Psi}_{\xi,L}(\underline{x}):
	=&
	\frac{1}{L} \int_0^L \xi(\exp(uw))\Big( \prod_{s=1}^k \varphi_s(\exp(u w^{(s)})x_s) -  \prod_{s=1}^k \varphi^\star_s(\exp(u w^{(s)})x_s)\Big) \, du\\
 =&
	\frac{1}{L} \int_0^L \xi(\exp(uw))\Big( \prod_{s=1}^k \varphi_s(\exp(u w^{(s)})x_s) -  \prod_{s=1}^k \varphi^\star_s(x_s)\Big) \, du \nonumber
\end{align}
for $\underline{x}=(x_1,\ldots,x_k)\in X^k$.
Setting 
$$
\Phi: = \prod_{s=k+1}^r \varphi_s \circ a(t_s)\quad
\hbox{and}\quad a(t):=(a(t_1),\ldots,a(t_k)),
$$
we obtain that 
\[
I_{1}(\xi)-I_{2}(\xi) = \nu\big(\xi\cdot (\Psi_{\xi,L}\circ a(t))|_{diag(X^k)} \cdot \Phi\big).
\]
Hence, by Cauchy-Schwartz inequality,
\begin{align}\label{eq:step2_0}
|I_{1}(\xi)-I_{2}(\xi)| &\leq \nu\big((|\Psi_{\xi,L}|^2\circ a(t))|_{diag(X^k)}\big)^{1/2} \|\Phi\|_{C^0}\\
&\le \nu\big((|\Psi_{\xi,L}|^2\circ a(t))|_{diag(X^k)}\big)^{1/2}
	{\prod}_{s=k+1}^r \|\varphi_s\|_{C^0}. \nonumber
\end{align}
Let $\mu^{(k)}:=\prod_{s=1}^k\mu_{I_s}$ be the measure on $X^k$.
Using that the inequality 
$$
\alpha^{1/2} \leq |\alpha-\beta|^{1/2}+\beta^{1/2}\quad\hbox{ for all $\alpha,\beta \geq 0$,}
$$
we see that
\begin{align}\label{eq:step2_1}
	\nu\big((|\Psi_{\xi,L}|^2\circ a(t))|_{diag(X^k)}\big)^{1/2} 
	\leq&  \big|\nu\big((|\Psi_{\xi,L}|^2\circ a(t))|_{diag(X^k)}\big) - \mu^{( k)}\big(|{\Psi}_{\xi,L}|^2\big)\big|^{1/2} \\
	& \quad\quad + \mu^{(k)}\big(|{\Psi}_{\xi,L}|^2\big)^{1/2}. \nonumber
\end{align}

We proceed to estimate the first term in \eqref{eq:step2_1}.
The function $|\Psi_{\xi,L}|^2$ can be written as linear combination of functions of the form
$$
{\Omega}_{\xi,L}(\underline{x}):=\frac{1}{L^2}\int_0^L\int_0^L \xi((u_1-u_2)w) \left(\prod_{s=1}^k \rho_{1,s}(\exp(u_1w^{(s)})x_s)\rho_{2,s}(\exp(u_2w^{(s)})x_s)\right)\, du_1du_2,
$$
where $\rho_{i,s}$ is either $\varphi_s$, $\varphi^\star_s$, $\bar\varphi_s$, $\bar\varphi^\star_s$. 
Let
$$
\omega^{u_1,u_2}_s:= \big(\rho_{1,s}\circ\exp(u_1w^{(s)})\big)\cdot \big(\rho_{2,s}\circ\exp(u_2w^{(s)})\big).
$$
Then
\begin{align*}
\nu\big((\Omega_{\xi,L}\circ a(t))|_{diag(X^k)}\big)
=\frac{1}{L^2}\int_0^L\int_0^L \xi((u_1-u_2)w)\int_Y \left(\prod_{s=1}^k \omega_s^{u_1,u_2}\circ a(t_s)\right)\, d\nu\, du_1du_2.
\end{align*}
Since $k<r$, we can apply the inductive hypothesis to conclude that 
for some $\ell\ge 1$ and $\delta_k>0$,
$$
\int_Y \left(\prod_{s=1}^k \omega_s^{u_1,u_2}\circ a(t_s)\right)\, d\nu
=
 \prod_{s=1}^k \int_{X_{I_s}} \omega_s^{u_1,u_2}\,d\mu_{I_s}
 +O\left( e^{-\delta_k D(t_1,\ldots,t_k)}{\prod}_{s=1}^k \|\omega_s^{u_1,u_2}\|_{C^\ell} \right).
$$
Since 
\begin{align*}
	\mu^{(k)}\big(\Omega_{\xi,L}\big)
	=\frac{1}{L^2}\int_0^L\int_0^L \xi((u_1-u_2)w)\left(\prod_{s=1}^k \int_{X_{I_s}}  \omega_s^{u_1,u_2}\, d\mu_{I_s}\right)\, du_1du_2,
\end{align*}
it remains to estimate the error term. We have 
\begin{align*}
	\|\omega_s^{u_1,u_2}\|_{C^\ell}
	&\leq  \big\|(\rho_{1,s} \circ \exp(u_1 w^{(s)}))\cdot  (\rho_{2,s} \circ \exp(u_2 w^{(s)}))\big\|_{C^\ell} \\
	&\ll 
	\big\|\rho_{1,s} \circ \exp(u_1 w^{(s)})\big\|_{C^\ell}\cdot  \big\|\rho_{2,s} \circ \exp(u_2 w^{(s)})\big\|_{C^\ell}\\
	&\ll \max\big(1, L\|w^{(s)} \|\big)^{2\ell} \|\rho_{1,s}\|_{C^\ell}\|\rho_{2,s}\|_{C^\ell}\\
	&\le \max\big(1, L\|w^{(s)} \|\big)^{2\ell} \|\varphi_s\|_{C^\ell}^2.
\end{align*}
Hence, we deduce that
\begin{align*}
\Big|\nu\big((\Omega_{\xi,L}\circ a(t))|_{diag(X^k)}\big) - & \mu^{(k)}\big(\Omega_{\xi,L}\big)\Big|\\
\ll& \max\big(1,L\|w^{(1)}\|\big)^{2k\ell} e^{-\delta_k D(t_1,\ldots,t_k)} {\prod}_{s=1}^k \|\varphi_s\|_{C^\ell}^2.
\end{align*}
Recalling that $|\Psi_{\xi,L}|^2$ is a linear combination of functions 
of the form $\Omega_{\xi,L}$, we also conclude that
\begin{align}
	\label{eq:step2_2}
	\Big|\nu\big((|\Psi_{\xi,L}|^2\circ a(t))|_{diag(X^k)}\big) -& \mu^{(k)}\big(|\Psi_{\xi,L}|^2\big)\Big|\\
	&\ll \max\big(1,L\|w^{(1)}\|\big)^{2k\ell } e^{-\delta_k D(t_1,\ldots,t_k)} {\prod}_{s=1}^k \|\varphi_s\|_{C^\ell}^2. \nonumber
\end{align}

\medskip
\noindent{\sc Step III:} {\it  
	Decorrelation estimates for the flows $\exp(u w^{(s)})$ with $s=1,\ldots, k$.
}

\vspace{0.1cm}

Now we proceed with estimating the second term in \eqref{eq:step2_1}.
We write ${\Psi}_{\xi,L}$ as
\[
{\Psi}_{\xi,L}(\underline{x}) = \frac{1}{L} \int_0^L \xi(\exp(uw)) \Theta_u(\underline{x}) \, du
\]
with
\[
\Theta_u(\underline{x}) := \prod_{s=1}^k \varphi_s\big(\exp(u w^{(s)})x_s\big) - \prod_{s=1}^k \varphi^\star_s(x_s),
\]
so that 
$$
\left|{\Psi}_{\xi,L}(\underline{x})\right|^2=\frac{1}{L^2} \int_0^L\int_0^L \xi(\exp((u_1-u_2)w)) \Theta_{u_1}(\underline{x})\overline{\Theta}_{u_2}(\underline{x}) \, du_1du_2.
$$
We have 
\begin{align*}
\Theta_{u_1}(\underline{x})\overline{\Theta}_{u_2}(\underline{x})
=&\prod_{s=1}^k \varphi_s\big(\exp(u_1 w^{(s)})x_s\big)\bar\varphi_s\big(\exp(u_2 w^{(s)})x_s\big)\\
&\quad\quad-\prod_{s=1}^k \varphi_s\big(\exp(u_1 w^{(s)})x_s\big)\bar\varphi^\star_s(x_s) 
 -\prod_{s=1}^k \varphi^\star_s(x_s)  \bar \varphi_s\big(\exp(u_2 w^{(s)})x_s\big) \\
&\quad\quad+\prod_{s=1}^k \varphi_s^\star(x_s)\bar\varphi^\star_s(x_s).
\end{align*}
Since 
$$
\int_{X_{I_s}} \big(\varphi_s\circ \exp(u_1 w^{(s)})\big)\cdot \bar\varphi^\star_s\, d\mu_{I_s}=
\int_{X_{I_s}} \varphi^\star_s\cdot\big(  \bar \varphi_s\circ \exp(u_2 w^{(s)})\big)\, d\mu_{I_s}=
\int_{X_{I_s}} \varphi^\star_s \bar\varphi^\star_s\, d\mu_{I_s},
$$
we obtain that
\begin{align*}
\mu^{(k)}\left( \Theta_{u_1}\cdot \overline{\Theta}_{u_2}\right)
=\prod_{s=1}^{k} \mu_{I_s}\big(\varphi_s\circ \exp( u_1 w^{(s)}) \cdot {\bar\varphi_s\circ \exp( u_2 w^{(s)})} \big)-
\prod_{s=1}^{k} \mu_{I_s}\big(\varphi^\star_s\cdot \bar\varphi^\star_s\big).
\end{align*}
We claim that
\begin{align}\label{eq:step2_3}
\mu^{(k)}\big(|{\Psi}_{\xi,L}|^2\big) &=\frac{1}{L^2} \int_0^L\int_0^L \xi(\exp((u_1-u_2)w)) \mu^{(k)}\left(\Theta_{u_1}\cdot\overline{\Theta}_{u_2}\right) \, du_1du_2\\
&\ll
\max\big(1,L\|w^{(k)}\|\big)^{-c}\, {\prod}_{s=1}^k \|\varphi_s\|_{C^\ell}^2\nonumber
\end{align}
for some $c>0$. 
We write 
$$
\{1\ldots,k\}=S_{mix}\sqcup S_{mix}^c,
$$ where $S_{mix}$ consists of such indices $s$ that $w$ is a mixing direction for $I_s$.
By Corollary~\ref{cor:decor}, for $s\in S_{mix}$,
\begin{align}\label{eq:mix_last}
\mu_{I_s}\big(\varphi_s\circ \exp( u_1 w^{(s)}) \cdot {\bar\varphi_s\circ \exp( u_2 w^{(s)})} \big)=&
\mu_{I_s}\big(\varphi^\star_s\cdot \bar\varphi^\star_s\big)\\
&\;+O\Big(\max\big(1,|u_1-u_2|\|w^{(s)}\|\big)^{-\tau}\, \|\varphi_s\|_{C^\ell}^2\Big). \nonumber
\end{align}
Of course, in this case,  $\mu_{I_s}\big(\varphi^\star_s\cdot \bar\varphi^\star_s\big)=\mu_{I_s}(\varphi_s)\mu_{I_s}(\bar\varphi_s)$.

Let 
\[
\Theta^\star_u(\underline{x}) := \prod_{s\in S_{mix}^c} \varphi_s\big(\exp(u w^{(s)})x_s\big)\cdot\prod_{s\in S_{mix}} \varphi^\star_s(x_s) - \prod_{s=1}^k \varphi^\star_s(x_s)
\]
and 
\[
{\Psi}^\star_{\xi,L}(\underline{x}) := \frac{1}{L} \int_0^L \xi(\exp(uw)) \Theta^\star_u(\underline{x}) \, du.
\]
As above, we obtain that 
$$
\mu^{(k)}\big(|{\Psi}^\star_{\xi,L}|^2\big) =\frac{1}{L^2} \int_0^L\int_0^L \xi(\exp((u_1-u_2)w)) \mu^{(k)}\left(\Theta^\star_{u_1}\cdot\overline{\Theta}^\star_{u_2}\right) \, du_1du_2,
$$
and 
\begin{align*}
\mu^{(k)}\left( \Theta^\star_{u_1}\cdot \overline{\Theta}^\star_{u_2}\right)
=&\prod_{s\in S_{mix}^c} \mu_{I_s}\big(\varphi_s\circ \exp( u_1 w^{(s)}) \cdot {\bar\varphi_s\circ \exp( u_2 w^{(s)})} \big)\prod_{s\in S_{mix}} \mu_{I_s}\big(\varphi^\star_s\cdot \bar\varphi^\star_s\big)\\
&\quad\quad-\prod_{s=1}^{k} \mu_{I_s}\big(\varphi^\star_s\cdot \bar\varphi^\star_s\big).
\end{align*}
It follows from \eqref{eq:mix_last} that 
\begin{align*}
\mu^{(k)}\left( \Theta_{u_1}\cdot \overline{\Theta}_{u_2}\right)=&\mu^{(k)}\left( \Theta^\star_{u_1}\cdot \overline{\Theta}^\star_{u_2}\right)\\
& \quad+O\Big(\max\big(1,|u_1-u_2|\|w^{(k)}\|\big)^{-\tau}\, {\prod}_{s=1}^k \|\varphi_s\|_{C^\ell}^2\Big),
\end{align*}
so that using \cite[Lemma~4.3]{BG2}, we conclude that 
\begin{align*}
\Big| \mu^{(k)}\big(|{\Psi}_{\xi,L}|^2\big)-\mu^{(k)}\big(|{\Psi}^\star_{\xi,L}|^2\big)\Big| 
&\le\frac{1}{L^2} \int_0^L\int_0^L \big| \mu^{(k)}\left(\Theta_{u_1}\cdot\overline{\Theta}_{u_2}\right)-
\mu^{(k)}\left(\Theta^\star_{u_1}\cdot\overline{\Theta}^\star_{u_2}\right)\big|\, du_1du_2\\
&\ll
\max\big(1,L\|w^{(k)}\|\big)^{-\tau}\, {\prod}_{s=1}^k \|\varphi_s\|_{C^\ell}^2.\nonumber
\end{align*}
Finally, the estimate for $\mu^{(k)}\big(|{\Psi}^\star_{\xi,L}|^2\big)$ reduces to analysis of the integrals
\begin{align*}
\frac{1}{L^2} \int_0^L\int_0^L \xi(\exp((u_1-u_2)w)) \Big(\prod_{s\in S_{mix}^c} \mu_{I_s}\big(\varphi_s\circ   \exp( u_1  & w^{(s)})  \cdot {\bar\varphi_s\circ \exp( u_2 w^{(s)})} \big)\\
&-\prod_{s\in S_{mix}^c} \mu_{I_s}\big(\varphi^\star_s\cdot \bar\varphi^\star_s\big)\Big)\, du_1du_2.
\end{align*}
In this case, according to Corollary \ref{cor:eq_part} and Remark \ref{rem:general}, we also have the bound
$$
\ll \max\big(1,L\|w^{(k)}\|\big)^{-\tau'}\, {\prod}_{s=1}^k \|\varphi_s\|_{C^\ell}^2
$$
for some $\tau'>0$. This verifies \eqref{eq:step2_3}.

\medskip

Combining \eqref{eq:step2_0}, \eqref{eq:step2_1}, \eqref{eq:step2_2}, and 
\eqref{eq:step2_3}, we conclude that 
\begin{align}
	\label{eq:step1_1}
|I_{1}(\xi)-I_{2}(\xi)|\ll \Big(
\max\big(1,L\|w^{(1)}\|\big)^{k\ell} &e^{-\delta_k D(t_1,\ldots,t_k)/2}\\
&+ \max\big(1,L\|w^{(k)}\|\big)^{-c/2}\Big){\prod}_{s=1}^r \|\varphi_s\|_{C^\ell}. \nonumber
\end{align}

\medskip
\noindent{\sc Step IV:} {\it  
	Optimizing the parameters.
}

\vspace{0.1cm}

Combining the bound \eqref{eq:step1_1} with \eqref{eq:step1_0}  and \eqref{eq:step1_0_0}, we derive that
\begin{align*}
\int_{Y} \xi \left(\prod_{s=1}^r \varphi_s\circ a(t_s)\right)\, d\nu=&	
\int_{Y} \xi\left(\prod_{s=1}^k \varphi^\star_s\circ a(t_s)\right) \left(   \prod_{s=k+1}^r   \varphi_s\circ a(t_s)\right)\, d\nu\\
&+O\Big(L\|w^{(k+1)}\|+\max\big(1,L\|w^{(1)}\|\big)^{k\ell } e^{-\delta_k D(t_1,\ldots,t_k)/2}\\
&\quad\quad\quad\quad\quad+ \max\big(1,L\|w^{(k)}\|\big)^{-c/2}\Big){\prod}_{s=1}^r \|\varphi_s\|_{C^\ell}. 
\end{align*}
Since $\|w^{(s)}\|=e^{\alpha(t_s)}$, 
it remains to optimize
$$
L\,e^{\alpha(t_{k+1})}+\max\big(1,L\,e^{\alpha(t_1)}\big)^{k\ell } \cdot e^{-\delta_k D(t_1,\ldots,t_k)/2}
+ \max \big(1,L\, e^{\alpha(t_k)}\big)^{-c/2}
$$
by choosing $L$ suitably. We recall that
$$
\alpha(t_{k}-t_{k+1})\ge \theta/r \, D(t_1,\ldots.t_r)
$$
We consider two cases:

\vspace{0.2cm}

\noindent{\sc Case A:} {\it  
	$\alpha(t_1-t_k)\le \rho\, D(t_1,\ldots,t_r)$ with $\rho:=\delta_k/(8k\ell )$.
}

\vspace{0.1cm}

In this case, we take 
$$
L=e^{-\alpha(t_k)}\cdot e^{\epsilon D(t_1,\ldots,t_r)}\quad\hbox{with $\epsilon=\min(\theta/(2r),\rho)$.}
$$
Then 
\begin{align*}
L\,e^{\alpha(t_{k+1})}&=e^{\alpha(t_{k+1}-t_k)}\cdot e^{\epsilon D(t_1,\ldots,t_r)}\le e^{-\epsilon\, D(t_1,\ldots,t_r)},\\
\max\big(1,L\,e^{\alpha(t_1)}\big)^{k\ell} e^{-\delta_k D(t_1,\ldots,t_k)/2}&\le 
e^{-(\delta_k/2-(\rho+\eps)\cdot k\ell) D(t_1,\ldots,t_k)}
 = e^{-\delta_k/4\, D(t_1,\ldots,t_r)},\\
\max \big(1,L\, e^{\alpha(t_k)}\big)^{-c/2} &\le e^{-\epsilon c/2\, D(t_1,\ldots,t_r)}.
\end{align*}
Therefore, we conclude that for some $\delta'>0$,
\begin{align*}
	\int_{Y} \xi \left(\prod_{s=1}^r \varphi_s\circ a(t_s)\right)\, d\nu=&	
	\int_{Y} \xi\left(\prod_{s=1}^k \varphi^\star_s\circ a(t_s)   \prod_{s=k+1}^r   \varphi_s\circ a(t_s)\right)\, d\nu\\
	&\quad\quad\quad\quad\quad\quad +O\Big( e^{-\delta'\, D(t_1,\ldots,t_r)}\Big){\prod}_{s=1}^r \|\varphi_s\|_{C^\ell}. 
\end{align*}
Furthermore, for some $s_0=1,\ldots k$, the direction $w$ is mixing for $I_{s_0}$, so that 
$$
\varphi^\star_{s_0}=\int_{X_{I_{s_0}}} \varphi_{s_0}\, d\mu_{I_{s_0}}.
$$
Then 
\begin{align}\label{eq:llast}
&\int_{Y}\xi \left(\prod_{s=1}^k \varphi^\star_s\circ a(t_s)\right) \left(   \prod_{s=k+1}^r   \varphi_s\circ a(t_s)\right)\, d\nu\\
=&
\int_{X_{I_{s_0}}} \varphi_{s_0}\, d\mu_{I_{s_0}} \cdot
\int_{Y} \xi\left(\prod_{1\le s\le k, s\ne s_0} \varphi^\star_s\circ a(t_s)\right) \left(   \prod_{s=k+1}^r   \varphi_s\circ a(t_s)\right)\, d\nu,\nonumber
\end{align}
and the estimate follows by induction on the number of factors.

\vspace{0.2cm}

\noindent{\sc Case B:} {\it  
	$\alpha(t_1-t_k)> \rho\, D(t_1,\ldots,t_r)$ with $\rho=\delta_k/(8k\ell)$.
}

\vspace{0.1cm}

In this case, there exists $k_1=1,\ldots k-1$ such that 
\begin{equation}
	\label{eq:gapp}
\alpha(t_{k_1}-t_{k_1+1})> \rho/(k-1)\cdot D(t_1,\ldots,t_r)
\end{equation}
Then we change our choice of $k$ to $k_1$ and apply the above argument with $k_1$.
Again we land either in {\sc Case A} or {\sc Case B} (with $k$ replaced by $k_1$).
In {\sc Case A},  we get
\begin{align}\label{eq:stepp}
	\int_{Y} \xi\left(\prod_{s=1}^r \varphi_s\circ a(t_s)\right)\, d\nu=&	
	\int_{Y} \xi \left(\prod_{s=1}^{k_1} \varphi^\star_s\circ a(t_s) \right) \left(  \prod_{s=k_1+1}^r   \varphi_s\circ a(t_s)\right)\, d\nu\\
	&\quad+O\Big( e^{-\delta'\, D(t_1,\ldots,t_r)}\Big){\prod}_{s=1}^r \|\phi_s\|_{C^\ell} \nonumber
\end{align}
for some $\delta'>0$.
In {\sc Case B}, we can replace $k_1$ by a smaller index, so that 
a version of \eqref{eq:gapp} still holds.
Ultimately, for some choice of $k_1\le k$ 
we get the estimate \eqref{eq:stepp},
and it remains to analyze the latter integral.
If there exists $s\le k_1$ such that $w$ is a mixing direction for $I_s$,
we can argue as in \eqref{eq:llast} to finish the proof.
Otherwise, we may assume that the functions $\varphi_s$, $s=1,\ldots,k_1$,
are invariant under the subgroup $U_{I_s,w}$.
Then \eqref{eq:average} can be rewritten as 
\begin{align*}
	I(\xi) &= \int_{Y}\xi \left(  \frac{1}{L}\int_0^L \xi(\exp(uw)) \prod_{s=1}^{k_1}
	\varphi_s\circ a(t_s) \prod_{s=k_1+1}^r \varphi_s\circ \exp(u w^{(s)})\circ a(t_s)\, du\right)\, d\nu.
\end{align*}
We note that the one-parameter subgroups $\exp(u w^{(s)})$ appear only in the factors $s=k_1+1,\ldots,r$. In this case, our argument gives
\begin{align*}
	&\int_{Y} \xi  \left(\prod_{s=1}^r \varphi_s\circ a(t_s)\right)\, d\nu\\
	=&	
	\int_{Y} \xi \left(\prod_{s=1}^k \varphi^\star_s\circ a(t_s)\right) \left(   \prod_{s=k+1}^r   \varphi_s\circ a(t_s)\right)\, d\nu\\
	&\quad\quad+O\Big(L\|w^{(k+1)}\|
	+\max\big(1,L\|w^{(k_1+1)}\|\big)^{(k-k_1)\ell} e^{-\delta_{k-k_1}D(t_{k_1+1},\ldots,t_k)/2}\\
	&\quad\quad\quad\quad\quad\quad\quad\quad+ \max\big(1,L\|w^{(k)}\|\big)^{-c/2}\Big){\prod}_{s=1}^r \|\varphi_s\|_{C^\ell}. 
\end{align*}
As above we consider two cases:
\begin{itemize}
	\item {\sc Case A:} $\alpha(t_{k_1+1}-t_k)\le \rho\, D(t_1,\ldots,t_r)$ with $\rho:=\delta_{k-k_1}/(8(k-k_1)\ell)$.
	\item {\sc Case B:} $\alpha(t_{k_1+1}-t_k)> \rho\, D(t_1,\ldots,t_r)$.
\end{itemize}

In {\sc Case A}, we complete the proof by induction on the number of factors
using that 
$\varphi^\star_{s_0}=\int_{X_{I_{s_0}}} \varphi_{s_0}\, d\mu_{I_{s_0}}$
for some $s_0=k_1+1,\ldots k$.

In {\sc Case B}, we argue as above to show that there exists $k_2=k_1+1,\ldots,k$
such that  
\begin{align*}
	\int_{Y} \xi \left(\prod_{s=1}^r \varphi_s\circ a(t_s)\right)\, d\nu=&	
	\int_{Y} \xi \left(\prod_{s=1}^{k_2} \varphi^\star_s\circ a(t_s)  \right) \left(\prod_{s=k_2+1}^r   \varphi_s\circ a(t_s)\right)\, d\nu\\
	&\quad\quad\quad\quad+O\Big( e^{-\delta'\, D(t_1,\ldots,t_r)}\Big){\prod}_{s=1}^r \|\varphi_s\|_{C^\ell}
\end{align*}
for some $\delta'>0$. Therefore, we can assume that 
all functions $\varphi_s$ with $s=1,\ldots, k_2$  are are $U_{w,I_s}$-invariant. 

Arguing this way, we eventually either land in {\sc Case A} or
reach the point when $s_0\le k_2$, so that we can complete 
the proof by induction on the number of factors.

This completes the proof of Theorem \ref{th:multi_gen}.

\section{Quasi-independence for separated tuples}\label{sec:separated}

For $I \subset \{1,\ldots,m+n\}$ and $t\in A^+$, we define 
\[
\lfloor t \rfloor_I := \min_{i \in I} t_i \qand \lceil t \rceil_I := \max_{i \in I} t_i.
\]
When $I=\emptyset$, we set $\lfloor t \rfloor_I= \lceil t \rceil_I =0$.
For simplicity, we also write
\[
\lfloor t \rfloor 
:= \lfloor t \rfloor_{\{1,\ldots,m+n\}} \qand \lceil t \rceil := \lceil t \rceil_{\{1,\ldots,m+n\}}.
\]

We fix $\ell\ge 1$ and $\delta>0$ such that Theorem  \ref{th:multi_gen} holds.

\begin{lemma}
\label{l:induction}
Let $\alpha,  \beta\in (0,1)$ and let $I_1,\ldots,  I_r \subseteq \{1,\ldots,m+n\}$ be admissible subsets. 
Then
for every $f\in C^\infty(Y)$, $\varphi_1,\ldots,\varphi_r\in C^\infty_c(X)$, and 
 $t_1,\ldots,  t_r \in A^{+}$ such that
\begin{align}
\min\big(\lfloor t_1\rfloor_{I_1},\ldots,  \lfloor t_r\rfloor_{I_r}\big)  &\geq \alpha\, \Delta(t_1,\ldots, t_r), \label{eq:1}\\
\max\big( \lceil t_1 \rceil_{I_1^c},\ldots, \lceil t_r \rceil_{I_r^c}\big) &\leq \beta\, \Delta(t_1,\ldots, t_r), \label{eq:2}
\end{align}
we have
\begin{align*}
\int_Y f \left(\prod_{s=1}^r\varphi_s \circ a(t_s)  \right)\,  d\nu 
= &\int_Y f\, d\nu\prod_{s=1}^r\int_{Y} \varphi_s \circ a(t_s) \,  d\nu \nonumber \\
&\quad\quad\quad+O\left(
e^{-(\delta\alpha-2r\ell \beta) \Delta(t_1,\ldots,t_r)} \,  \|f\|_W {\prod}_{s=1}^r \|\varphi_s\|_{C^\ell}\right).
\end{align*}
\end{lemma}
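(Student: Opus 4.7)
The plan is to reduce to Theorem \ref{th:multi_gen} by approximating each $t_s \in A^+$ by a close-by element $\tilde t_s \in A^+_{I_s}$ and absorbing the discrepancy into a twist of $\varphi_s$. Writing $\Delta := \Delta(t_1, \ldots, t_r)$, hypothesis \eqref{eq:2} says every coordinate of $t_s$ indexed by $I_s^c$ is at most $\beta\Delta$. I will define $\tilde t_s$ by zeroing out those coordinates and, if necessary, adjusting a single coordinate in $I_s$ by $O(\beta\Delta)$ to preserve the $A^+$-balance condition on $I_s$. Then $r_s := t_s - \tilde t_s$ lies in the linear span of $A^+$ and satisfies $\|r_s\|_\infty \leq C(m,n)\,\beta\Delta$. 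Since $a$ is a homomorphism on this span, one has the factorization $a(t_s) = a(\tilde t_s)\,a(r_s) \in \SL_{m+n}(\bR)$, so with $\psi_s := \varphi_s \circ a(r_s)$ the identity $\varphi_s \circ a(t_s) = \psi_s \circ a(\tilde t_s)$ rewrites the left-hand side of the lemma as $\int_Y f \prod_s \psi_s \circ a(\tilde t_s)\, d\nu$.

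The next step is to apply Theorem \ref{th:multi_gen} to the $\tilde t_s$'s and $\psi_s$'s. Since $\Ad(a(r_s))$ scales each root space by a factor of modulus at most $e^{2\|r_s\|_\infty}$, one gets $\|\psi_s\|_{C^\ell} \leq e^{2\ell\|r_s\|_\infty}\|\varphi_s\|_{C^\ell}$, and therefore $\prod_s \|\psi_s\|_{C^\ell} \leq e^{2r\ell\beta\Delta}\prod_s \|\varphi_s\|_{C^\ell}$ up to an implicit $C(m,n)$. Meanwhile, \eqref{eq:1} and \eqref{eq:2} together yield $\lfloor \tilde t_s\rfloor_{I_s} \geq \alpha\Delta - O(\beta\Delta)$ and $\Delta(\tilde t_1,\ldots,\tilde t_r) \geq \Delta - O(\beta\Delta)$, so $D(\tilde t_1,\ldots,\tilde t_r) \geq (\alpha - O(\beta))\Delta$. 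Plugging these into Theorem \ref{th:multi_gen} gives
\[
\int_Y f \prod_{s=1}^r \psi_s \circ a(\tilde t_s)\, d\nu = \int_Y f\, d\nu \prod_{s=1}^r \int_{X_{I_s}} \psi_s\, d\mu_{I_s} + O\!\left(e^{-\delta(\alpha - O(\beta))\Delta}\|f\|_W \prod_{s=1}^r \|\psi_s\|_{C^\ell}\right).
\]

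To recover the right-hand side claimed in the lemma, I will apply the $r=1$ case of Theorem \ref{th:multi_gen} (equivalently, Theorem \ref{th:km_new}) separately for each index, replacing $\int_{X_{I_s}} \psi_s\, d\mu_{I_s}$ by $\int_Y \psi_s \circ a(\tilde t_s)\, d\nu = \int_Y \varphi_s \circ a(t_s)\, d\nu$ at a cost of $O(e^{-\delta\lfloor \tilde t_s\rfloor_{I_s}}\|\psi_s\|_{C^\ell})$. Telescoping these $r$ replacements and combining with the previous error produces
\[
\int_Y f \prod_{s=1}^r \varphi_s \circ a(t_s)\, d\nu = \int_Y f\, d\nu \prod_{s=1}^r \int_Y \varphi_s \circ a(t_s)\, d\nu + O\!\left(e^{-(\delta\alpha - 2r\ell\beta)\Delta}\|f\|_W \prod_{s=1}^r \|\varphi_s\|_{C^\ell}\right),
\]
after the $O(\beta\Delta)$ corrections are absorbed into the $\beta$-coefficient of the exponent.

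The sole obstacle is the bookkeeping needed to verify that the $C^\ell$-inflation from the twists by $a(r_s)$ combined with the mild degradation of $D(\tilde t_1,\ldots,\tilde t_r)$ collapse precisely to the advertised exponent $\delta\alpha - 2r\ell\beta$; this dictates a sufficiently careful choice of the balance adjustment and, if needed, mild enlargement of the constant $\ell$. No new dynamical input beyond Theorem \ref{th:multi_gen} is required.
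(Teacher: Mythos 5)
Your proposal follows essentially the same route as the paper: split each $t_s$ into its $I_s$-part and its $I_s^c$-part, absorb the small $I_s^c$-part into the test function (costing a factor $e^{2\ell\beta\Delta}$ per function in the $C^\ell$-norm), apply Theorem \ref{th:multi_gen} to the $I_s$-supported translates, and then use the $r=1$ case to convert each $\int_{X_{I_s}}\psi_s\,d\mu_{I_s}$ back into $\int_Y\varphi_s\circ a(t_s)\,d\nu$. The one substantive divergence is your balance-preserving adjustment of a coordinate in $I_s$: the paper simply takes $t_s'$ to be the restriction of $t_s$ to $I_s$ with no correction, and this lets it keep the exact quantities $\lfloor t_s'\rfloor_{I_s}=\lfloor t_s\rfloor_{I_s}\ge\alpha\Delta$ and, via a short case analysis (its inequality \eqref{eq:ineq}), the sharper bound $\Delta(t_1',\ldots,t_r')\ge\Delta(t_1,\ldots,t_r)$ rather than your $\Delta-O(\beta\Delta)$; together these produce the advertised exponent $\delta\alpha-2r\ell\beta$ exactly. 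Your adjustment perturbs both quantities by $O(m,n)\cdot\beta\Delta$ and inflates $\|r_s\|_\infty$ by a dimension-dependent factor, so your exponent comes out as $\delta\alpha - C(m,n,r,\ell)\,\beta$ with a larger constant; a constant sitting in an exponent cannot be "absorbed into the implicit constant" of the $O(\cdot)$, so as written you prove a slightly weaker version of the lemma. This is harmless for the application in Section \ref{sec:proof} (one just enlarges $\lambda$ accordingly), but if you want the stated constant you should drop the balance correction and verify the monotonicity $\Delta(t_1',\ldots,t_r')\ge\Delta(t_1,\ldots,t_r)$ directly, as the paper does.
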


\begin{proof}
We write
\[
t_s = t'_{s} + t''_{s}\quad\hbox{with $t'_{s} \in A^+_{I_s}$ and $t''_{s} \in A^+_{I_s^c}$.}
\]
Then
\[
\int_Y f\left(\prod_{s=1}^r\varphi_s \circ a(t_s)  \right)\,  d\nu 
=
\int_Y f\left(\prod_{s=1}^r\psi_s \circ a(t'_s)  \right)\,  d\nu,
\]
where
\[
\psi_s: = \varphi_s \circ a(t''_{s}).
\]
By Theorem \ref{th:multi_gen}, 
\begin{align}
\int_Y f\left(\prod_{s=1}^r \psi_s \circ a(t'_{s})\right) \,  d\nu
=& \int_Y f\, d\nu \prod_{s=1}^r \int_{X_{I_s}} \psi_s \,  d\mu_{I_s} \label{eq:prod}
\\[0.2cm]
&\quad+O\left(
e^{-\delta \min\big(\lfloor t'_{1}\rfloor_{I_1},\ldots,\lfloor t'_{r} \rfloor_{I_r},\Delta(t'_1,\ldots,t'_r)\big)} \, \|f\|_W{\prod}_{s=1}^r \|\psi_s\|_{C^\ell} \right).\nonumber
 \end{align}

We claim that for any $1\le s_1,s_2\le r$,
\begin{equation}\label{eq:ineq}
\|t'_{s_1}-t'_{s_2}\|\ge \|t_{s_1}-t_{s_2}\|.
\end{equation}
Let us suppose that $\|t_{s_1}-t_{s_2}\|=|t_{s_1,k}-t_{s_2,k}|$
for some $1\le k\le m+n$.
Without loss of generally, we may assume that $t_{s_1,k}\ge t_{s_2,k}$. 
When $k\in I_{s_1}\cap I_{s_2}$,
$$
|t_{s_1,k}-t_{s_2,k}|=|t'_{s_1,k}-t'_{s_2,k}|\le \|t'_{s_1}-t'_{s_2}\|,
$$
so that \eqref{eq:ineq} holds. 
When $k\in I_{s_1}\backslash I_{s_2}$, we also obtain that
$$
|t_{s_1,k}-t_{s_2,k}|\le t_{s_1,k}=|t'_{s_1,k}-t'_{s_2,k}|
$$
since $t'_{s_2,k}=0$.
When $k\notin I_{s_1}$, it follows from \eqref{eq:2} that 
$$
|t_{s_1,k}-t_{s_2,k}|\le t_{s_1,k}\le \beta\, \|t_{s_1}-t_{s_2}\|.
$$
Since $\beta<1$, this case is not possible. This verifies \eqref{eq:ineq}.

In view of  \eqref{eq:ineq} and \eqref{eq:1}, the estimate \eqref{eq:prod} gives
\begin{align*}
&\int_Y f \left(\prod_{s=1}^r \psi_s \circ a(t'_{s})\right) \,  d\nu\\
=& \int_Y f\, d\nu\prod_{s=1}^r \int_{X_{I_s}} \psi_s \,  d\mu_{I_s} 
\\[0.2cm]
&\quad\quad+\left(
e^{-\delta \min\big(\lfloor t'_{1}\rfloor_{I_1},\ldots,\lfloor t'_{r} \rfloor_{I_r},\Delta(t_1,\ldots,t_r)\big)} \, \|f\|_W {\prod}_{s=1}^r \|\psi_s\|_{C^\ell} \right)\\[0.2cm]
=& \int_Y f\, d\nu\prod_{s=1}^r \int_{X_{I_s}} \psi_s \,  d\mu_{I_s} 
+O\left(
e^{-\delta\alpha\, \Delta(t_1,\ldots,t_r)} \, \|f\|_W{\prod}_{s=1}^r \|\psi_s\|_{C^\ell} \right).
\end{align*}
Since by Theorem \ref{th:multi_gen} with $r=1$,
\[
\int_{Y} \psi_s \circ a(t'_{s}) \,  d\nu 
= \int_{X_{I_s}} \psi_s \,  d\mu_{I_s} + O\left(e^{-\delta \lfloor t'_{s}\rfloor_{I_s} }\|\psi_s\|_{C^\ell}\right),
\]
we conclude that 
$$
\prod_{s=1}^r \int_{X_{I_s}} \psi_s \,  d\mu_{I_s} 
=\prod_{s=1}^r \int_{Y} \psi_s \circ a(t'_{s}) \,  d\nu 
+O\left(
e^{-\delta \min\big(\lfloor t'_{1}\rfloor_{I_1},\ldots,\lfloor t'_{r} \rfloor_{I_r}\big)} \, {\prod}_{s=1}^r \|\psi_s\|_{C^\ell} \right),
$$
and
\begin{align}\label{eq:lll}
\int_Y f\left(\prod_{s=1}^r \psi_s \circ a(t'_{s})\right) \,  d\nu
=& \int_Y f\, d\nu \prod_{s=1}^r \int_{Y} \psi_s \circ a(t'_{s}) \,  d\nu \\
&\quad+O\left(
e^{-\delta \alpha\, \Delta(t_1,\ldots,t_r)} \, \|f\|_W {\prod}_{s=1}^r \|\psi_s\|_{C^\ell} \right).\nonumber 
\end{align}
where
$$
\prod_{s=1}^r \int_{Y} \psi_s \circ a(t'_{s}) \,  d\nu
=  \prod_{s=1}^r \int_{Y} \varphi_s \circ a(t_{s}) \,  d\nu.
$$
Furthermore, it follows from \eqref{eq:2} that 
\[
\|\psi_s\|_{C^\ell} \ll e^{2\ell \lceil t''_{s} \rceil_{I_s^c}} \|\varphi_s\|_{C^\ell} = e^{2\ell \lceil t_{s} \rceil_{I_s^c}} \|\varphi_s\|_{C^\ell} \leq e^{2\ell \beta\, \Delta(t_1,\ldots,t_r)}\|\varphi_s\|_{C^\ell}.
\]
This implies the result.
\end{proof}

\section{Proof of main theorem} \label{sec:proof}

We fix $\ell\ge 1$ and $\delta>0$ such that Theorem  \ref{th:multi_gen} holds and set
\begin{equation}
	\label{def_lambda}
	\lambda := \max\big(2, {4r\ell}/{\delta}\big). 
\end{equation}

Since the case $r=1$ is trivial, 
we may assume by induction that the theorem holds for a smaller number of factors. Namely, for every $r_1\le r-1$, there exists $\eta(r_1)>0$
\begin{align}\label{eq:induction}
	\int_Y \left(\prod_{s=1}^{r_1} \varphi_s \circ a(t_s) \right) \,  d\nu =&
	\prod_{s=1}^{r_1} \int_{Y} \varphi_s \circ a(t_s) \,  d\nu 
	+ O\left( e^{-\eta(r_1)\, \Delta(t_1,\ldots,t_r)} {\prod}_{s=1}^{r_1} \|\varphi_s\|_{C^\ell}\right).
\end{align}

Throughout the proof, we use constants $c_1,\ldots,c_r$ such that 
\begin{equation}\label{eq:c1}
c_r\ge c_{r-1}\ge \cdots \ge  c_1 > \max(m,n),
\end{equation}
which will be specified later.

We split the set of parameters $(t_1,\ldots,t_r) \in (A^+)^r$
into multiple domains defined in terms of the constants $c_1,\ldots,c_r$.
These domains correspond to {\sc Cases $1^\prime_k$,
$1^{\prime\prime}_k$, $2_k$} with $k=1,\ldots,r$, described below.

\vspace{0.2cm}

\noindent{\sc Case $1_r$:} {\it there exist admissible subsets $I_1,\ldots,I_{r} \subset \{1,\ldots,m+n\}$ such that
\[
\Delta(t_1,\ldots,t_r) \leq c_r \min\big(\lfloor{t_1}\rfloor_{I_1}, \ldots, \lfloor{t_r}\rfloor_{I_r}\big).
\]
}

\vspace{0.1cm}

In this case, we distinguish two possibilities --- {\sc Cases  $1^\prime_r$} and
$1^{\prime\prime}_r$:

\vspace{0.2cm}
\noindent{\sc Case $1^\prime_r$:} {\it  the lower bound
\[
\Delta(t_1,\ldots,t_r)\geq \lambda c_r \max\big(\lceil t_1 \rceil_{I_1^c},\ldots, \lceil t_r \rceil_{I_r^c}\big)
\]
holds, 
where $\lambda$ is defined in \eqref{def_lambda}.  
}

\vspace{0.1cm}

In this case,  Lemma \ref{l:induction} is applicable with $\alpha = c_r^{-1}$ and $\beta = (\lambda c_r)^{-1} < c_r^{-1}$,  and we conclude that
\begin{align*}
\int_Y \left(\prod_{s=1}^{r} \varphi_s \circ a(t_s)\right) \,  d\nu 
&=  \prod_{s=1}^{r} \int_{Y} \varphi_s \circ a(t_s) \,  d\nu
+O\left(
e^{-(\delta/c_r - 2r\ell/\lambda c_r)\Delta(t_1,\ldots,t_r)} \, {\prod}_{s=1}^r \|\varphi_s\|_{C^\ell}\right)\\
&= \prod_{s=1}^r \int_{Y} \varphi_s \circ a(t_s) \,  d\nu
+O\left(
e^{-\delta/(2c_r)\, \Delta(t_1,\ldots,t_r)} \, {\prod}_{s=1}^r \|\varphi_s\|_{C^\ell} \right).
\end{align*}
This gives the required estimate. 

On the other hand, if the assumption in {\sc Case $1^\prime_r$} fails,
we have:

\vspace{0.2cm}

\noindent{\textsc{Case $1^{\prime\prime}_r$}:} {\it at least one of the sets
\[
J_s: = \big\{ j \in I_s^c \,  : \,  t_{s,j} > (\lambda c_r)^{-1}\Delta(t_1,\ldots,t_r) \big\},\quad s=1,\ldots,r,
\]
is non-empty.}

\vspace{0.1cm}

In this case, we introduce the new sets
\[
I'_{s} := I_s \sqcup J_s,
\]
and note that
\[
\Delta(t_1,\ldots,t_r) \leq \lambda c_r \min\big(\lfloor t_1 \rfloor_{I_1'},\ldots,\lfloor t_r \rfloor_{I_r'}\big).
\]
This means that we are in {\sc case $1_r$}
with $\lambda c_r$ instead of $c_r$ and $I_s'$
instead of $I_s$.  The key point in this argument is that at least one of  the sets $I'_s$ is strictly larger than the corresponding original sets $I_s$.

The new \textsc{Case $1^\prime_r$} is that
\[
\lambda^2 c_r \max\big(\lceil t_1 \rceil_{(I'_1)^c},\ldots \lceil t_r \rceil_{(I'_r)^c}\big) \leq \Delta(t_1,\ldots,t_r),
\]
which then implies as before that
\begin{align*}
\int_Y \left(\prod_{s=1}^r \varphi_s \circ a(t_s)\right) \,  d\nu 
=& \prod_{s=1}^r \int_{Y} \varphi_s \circ a(t_s) \,  d\nu
+O\left(
e^{-\delta/(2\lambda c_r)\, \Delta(t_1,\ldots,t_r)} \, {\prod}_{s=1}^r \|\varphi_s\|_{C^\ell} \right).
\end{align*}
The new \textsc{Case $1^{\prime\prime}_r$} is that at least one of the sets
\[
J'_s := \big\{ j \in (I'_s)^c \,  : \,  t_{s,j} > (1/\lambda^2 c_r)\Delta(t_1,\ldots,t_r) \big\}, \quad s=1,\ldots,r,
\]
is non-empty.  If this happens,  we consider again the new sets
\[
I_s'' := I'_s \sqcup J'_s
\]
and note that
\[
\Delta(t_1,\ldots,t_r) \leq \lambda^2 c_r \min\big(\lfloor t_1 \rfloor_{I_1''},\ldots,\lfloor t_r \rfloor_{I_r''}\big).
\]
We can now again restart {\sc Case $1_r$},  but with $\lambda^2 c_r$ instead of $\lambda c_r$,
and $I_s''$  instead of $I_s'$.  Again at least one of the sets $I_s''$ has strictly increased.  
For each such restart,  we are either in the favourable \textsc{Case $1_r^\prime$},  when we always get a suitable bound,  or we are in
the complementary \textsc{Case $1_r''$},  which allows us to increase (at least one of) the index sets,  and give \textsc{Case $1_r^\prime$} a new chance.  Since there are at most $r(m+n)$ indices in the union of the $r$ index sets,  there can be at most $r(m+n)$ restarts,
after which we must reach \textsc{Case $1_r^\prime$}.  
Therefore, the resulting bound after these iterations is: 
\begin{align*}
\int_Y \left(\prod_{s=1}^r \varphi_s \circ a(t_s)\right) \,  d\nu 
=& \prod_{s=1}^r \int_{Y} \varphi_s \circ a(t_s) \,  d\nu\\
&\quad\quad +O\left(
e^{-\delta/(2\lambda^{r(m+n)} c_r)\, \Delta(t_1,\ldots,t_r)} \, {\prod}_{s=1}^r \|\varphi_s\|_{C^\ell} \right).
\end{align*}
Hence, in {\sc Case $1_r$} we always get a favourable bound.

The complement of {\sc Case $1_r$}  is:

\vspace{0.2cm}

\noindent {\sc Case $2_r$:} {\it for {all} admissible $I_1,\ldots,  I_r \subset \{1,\ldots,m+n\}$,  we have }
\[
\Delta(t_1,\ldots,t_r) > c_r \min\big(\lfloor t_1\rfloor_{I_1},\ldots  \lfloor t_r\rfloor_{I_r}\big).
\]

\vspace{0.1cm}
For $t\in A^+$, we set
$$
\lceil t\rceil^\prime:=\min\big(\lceil t \rceil_{\{1,\ldots,m\}}, \lceil t \rceil_{\{m+1,\ldots,m+n\}} \big).
$$
We recall that for $t\in A^+$, one has $t_1+\cdots+t_m=t_{m+1}+\cdots+t_{m+n}$,
so that
$$ 
\lceil t \rceil_{\{1,\ldots,m\}}\le n\cdot \lceil t \rceil_{\{m+1,\ldots,m+n\}}
\quad\hbox{and}\quad
m\cdot \lceil t \rceil_{\{1,\ldots,m\}}\ge \lceil t \rceil_{\{m+1,\ldots,m+n\}}.
$$
Hence,
$$
\lceil t\rceil^\prime\ge \max(m,n)^{-1}\cdot \lceil t\rceil.
$$

By varying the sets $I_s$ over all possible two element admissible sets,
we see that {\sc Case $2_r$} can be equivalently formulated as: 
\[
\Delta(t_1,\ldots,t_r) > c_r \min\big(\lceil t_1 \rceil^\prime,\ldots  \lceil t_r \rceil^\prime\big).
\]
Then
\[
\Delta(t_1,\ldots,t_r) > c_r\max(m,n)^{-1} \min\big(\lceil t_1 \rceil,\ldots  \lceil t_r \rceil\big).
\]
This means that there exists an index $s$ such that 
$$
\Delta(t_1,\ldots,t_r) > c_r\max(m,n)^{-1} \lceil t_s \rceil.
$$
Without loss of generality, we may assume that 
\begin{equation}\label{eq:e0}
\Delta(t_1,\ldots,t_r) > c_r\max(m,n)^{-1} \lceil t_r \rceil.
\end{equation}
In this case, we introduce a function
$$
f_{r-1}:=\varphi_r\circ a(t_r).
$$
From \eqref{eq:e0},
$$
\big\|f_{r-1}|_Y\big\|_W\le \|f_{r-1}\|_{C^1}\ll e^{2\lceil t_r \rceil}\|\varphi_r\|_{C^1} \ll e^{2\max(m,n)/c_r\, \Delta(t_1,\ldots,t_r)} \|\varphi_r\|_{C^1},
$$
and 
$$
\int_Y \left(\prod_{s=1}^{r} \varphi_s \circ a(t_s)\right) \,  d\nu
=\int_Y f_{r-1}\left(\prod_{s=1}^{r-1} \varphi_s \circ a(t_s)\right) \,  d\nu.
$$
Now we apply the above reasoning to the later integral 
and consider the corresponding cases:

\vspace{0.2cm}
\noindent{\sc Case $1_{r-1}$:} {\it  
	we have
	$$
	\Delta(t_1,\ldots,t_r) > c_r \max(m,n)^{-1}\lceil t_r \rceil,
	$$
	and there are admissible subsets $I_1,\ldots,I_{r-1} \subset \{1,\ldots,m+n\}$ such that
	\[
	\Delta(t_1,\ldots,t_r) \leq c_{r-1} \min\big(\lfloor{t_1}\rfloor_{I_1}, \ldots, \lfloor{t_{r-1}}\rfloor_{I_{r-1}}\big).
	\]
}

\vspace{0.1cm}

This case splits as above into two complementary subcases:
	
\vspace{0.2cm}	
	
\noindent{\sc Case $1^\prime_{r-1}$:} {\it  
we have 
	\[
	\Delta(t_1,\ldots,t_r)\geq \lambda c_{r-1} \max\big(\lceil t_1 \rceil_{I_1^c},\ldots, \lceil t_{r-1} \rceil_{I_{r-1}^c}\big).
	\]
}

\vspace{0.1cm}

As in {\sc Case} $1^{\prime}_r$ above, we apply Lemma \ref{l:induction} with $\alpha = c_{r-1}^{-1}$ and $\beta = (\lambda c_{r-1})^{-1} < c_{r-1}^{-1}$,  and we can conclude that
\begin{align*}
	&\int_Y f_{r-1}\left(\prod_{s=1}^{r-1} \varphi_s \circ a(t_s)\right) \,  d\nu \\
	=&  \left(\int_Y f_{r-1}\,d\nu\right)\prod_{s=1}^{r-1} \int_{Y} \varphi_s \circ a(t_s) \,  d\nu\\
	&\quad+O\left(
	e^{-(\delta/c_{r-1} - 2r\ell/\lambda c_{r-1})\Delta(t_1,\ldots,t_r)} \, \big \|f_{r-1}|_Y\big\|_W{\prod}_{s=1}^{r-1} \|\varphi_s\|_{C^\ell}\right)\\
	=& \prod_{s=1}^r \int_{Y} \varphi_s \circ a(t_s) \,  d\nu\\
	&\quad+O\left(
	e^{-(\delta/(2c_{r-1})-2\max(m,n)/c_r)\, \Delta(t_1,\ldots,t_r)} \, {\prod}_{s=1}^r \|\varphi_s\|_{C^\ell} \right).
\end{align*}

\vspace{0.2cm}

\noindent{\textsc{Case $1^{\prime\prime}_{r-1}$}:} {\it 
	at least one of the sets
	\[
	J_s: = \big\{ j \in I_s^c \,  : \,  t_{s,j} > (1/\lambda c_{r-1})\Delta(t_1,\ldots,t_r) \big\},\quad s=1\ldots, r-1,
	\]
	is non-empty.}

\vspace{0.1cm}

As in {\sc Case $1^{\prime\prime}_{r}$}, we define the new sets
\[
I'_{s} := I_s \sqcup J_s,\quad s=1,\ldots,r-1,
\]
and note that
\[
\Delta(t_1,\ldots,t_r) \leq \lambda c_{r-1} \min(\lfloor t_1 \rfloor_{I_1'},\ldots,\lfloor t_r \rfloor_{I_r'}).
\]
Hence, we can now restart {\sc Case} $1_{r-1}$ with $\lambda c_{r-1}$ instead of $c_{r-1}$ and $I_s'$
instead of $I_s$. We apply this argument recursively.
If eventually we arrive to {\sc Case} $1^\prime_{r-1}$
and obtain the bound: 
	\begin{align*}
		\int_Y \left(\prod_{s=1}^r \varphi_s \circ a(t_s)\right) \,  d\nu 
		=& \prod_{s=1}^r \int_{Y} \varphi_s \circ a(t_s) \,  d\nu\\
		&+O\left(
		e^{-\big(\delta/(2 \lambda^{r(m+n)} c_{r-1})-2\max(m,n)/c_r\big)\Delta(t_1,\ldots,t_r)} \, {\prod}_{s=1}^r \|\varphi_s\|_{C^\ell} \right).
	\end{align*}
This provides a favourable estimate when 
$$
\delta/(2 \lambda^{r(m+n)} c_{r-1})-2\max(m,n)/c_r>0.
$$

The remaining case is:
\vspace{0.2cm}

\noindent {\sc Case $2_{r-1}$:} {\it 
	we have
	$$
	\Delta(t_1,\ldots,t_r) > c_r \max(m,n)^{-1}\lceil t_r \rceil,
	$$
	and for {all} admissible $I_1,\ldots,  I_{r-1} \subset \{1,\ldots,m+n\}$, 
	\[
	\Delta(t_1,\ldots,t_r) > c_{r-1} \min\big(\lfloor t_1\rfloor_{I_1},\ldots  \lfloor t_{r-1}\rfloor_{I_{r-1}}\big).
	\]
}

\vspace{0.1cm}

As above, the last inequality implies that 
\[
\Delta(t_1,\ldots,t_r) > c_{r-1}\max(m,n)^{-1} \min\big(\lceil t_1 \rceil,\ldots  \lceil t_{r-1} \rceil\big).
\]
This means that there exists $1\le s\le  r-1$ such that 
\[
\Delta(t_1,\ldots,t_r) > c_{r-1}\max(m,n)^{-1} \lceil t_s \rceil.
\]
Without loss of generality, we may assume that 
$$
	\Delta(t_1,\ldots,t_r) > c_{r-1}\max(m,n)^{-1} \lceil t_{r-1} \rceil.
$$
Since $c_r\ge c_{r-1}$, we have 
$$
\Delta(t_1,\ldots,t_r) > c_{r-1}\max(m,n)^{-1} \max\big( \lceil t_{r-1} \rceil, \lceil t_{r} \rceil\big).
$$
In this situation, we introduce the function
$$
f_{r-2}:=\big(\varphi_{r-1}\circ a(t_{r-1})\big)\cdot \big(\varphi_r\circ a(t_{r})\big)
$$
and apply the argument as above to
the product $f_{r-2} \left(\prod_{s=1}^{r-2} \varphi_s \circ a(t_s)\right)$.
We treat this product by considering {\sc Cases $1_{r-2}$} and $2_{r-2}$
defined as above. Then we apply this reasoning again to deal with the remaining case and so on.

\vspace{0.2cm}

Generally, the outlined procedure will allow us to reduce the argument
to the situation when 
\begin{equation}\label{eq:e}
\Delta(t_1,\ldots,t_r) > c_{k+1}\max(m,n)^{-1} \max\big( \lceil t_{k+1} \rceil, \ldots,\lceil t_{r} \rceil\big).
\end{equation}
Then we introduce the function
$$
f_{k}:=\prod_{s=k+1}^r \varphi_s\circ a(t_{s}).
$$
It follows from \eqref{eq:e} that 
\begin{align*}
\big\| f_{k}|_Y\big\|_W & \le 
\prod_{s=k+1}^r \big\| \varphi_s\circ a(t_{s})|_Y\big\|_W
\le \prod_{s=k+1}^r \|\varphi_s\circ a(t_{s})\|_{C^1}\ll \prod_{s=k+1}^r e^{2\lceil t_{s} \rceil} \|\varphi_s\|_{C^1}\\
&\ll e^{2(r-k)\max(m,n)/c_{k+1}\, \Delta(t_1,\ldots,t_k)} \prod_{s=k+1}^r \|\varphi_s\|_{C^1}.
\end{align*}
and
$$
\int_Y \left(\prod_{s=1}^{r} \varphi_s \circ a(t_s)\right) \,  d\nu=
\int_Y f_{k}\left(\prod_{s=1}^{k} \varphi_s \circ a(t_s)\right) \,  d\nu.
$$
To analyze this integral, we consider the cases: 

\vspace{0.2cm}
\noindent{\sc Case $1_k$:} {\it  
	we have
	$$
	\Delta(t_1,\ldots,t_r) > c_{k+1}\max(m,n)^{-1}\,\max\big( \lceil t_{k+1} \rceil,\ldots \lceil t_r \rceil\big),
	$$
	and 
	there are admissible subsets $I_1,\ldots,I_{k} \subset \{1,\ldots,m+n\}$ such that
	\[
	\Delta(t_1,\ldots,t_r) \leq c_{k} \min\big(\lfloor{t_1}\rfloor_{I_1}, \ldots, \lfloor{t_{k}}\rfloor_{I_{k}}\big)
	\]
}
	
\vspace{0.1cm}	
	
{\sc Case $1_k$} splits into two subcases:	
	
\vspace{0.2cm}
\noindent{\sc Case $1^\prime_k$:} {\it  
	we have  
	\[
	\Delta(t_1,\ldots,t_r)\geq \lambda c_{k} \max\big(\lceil t_1 \rceil_{I_1^c},\ldots, \lceil t_{k} \rceil_{I_{k}^c}\big).
	\]
}

Applying Lemma \ref{l:induction} with $\alpha = c_{k}^{-1}$ and $\beta = (\lambda c_{k})^{-1} < c_{k}^{-1}$,  we conclude that
\begin{align*}
	&\int_Y f_{k}\left(\prod_{s=1}^{k} \varphi_s \circ a(t_s)\right) \,  d\nu \\
	=&  \left(\int_Y f_{k}\,d\nu\right)\prod_{s=1}^{k} \int_{Y} \varphi_s \circ a(t_s) \,  d\nu\\
	&\quad\quad+O\left(
	e^{-(\delta/c_{k} - 2r\ell/\lambda c_{k})\Delta(t_1,\ldots,t_r)} \, \big\|f_{k}|_Y\big\|_W {\prod}_{s=1}^{k} \|\varphi_s\|_{C^\ell}\right)\\
	=& \left(\int_Y f_{k}\,d\nu\right) {\prod}_{s=1}^k \int_{Y} \varphi_s \circ a(t_s) \,  d\nu\\
	&\quad\quad+O\left(
	e^{-\big(\delta/(2c_{k})-2(r-k) \max(m,n)/c_{k+1}\big)\, \Delta(t_1,\ldots,t_r)} \, {\prod}_{s=1}^k \|\varphi_s\|_{C^\ell} \right).
\end{align*}
Our inductive assumption \eqref{eq:induction}, gives that 
$$
\int_Y f_{k}\,d\nu=\prod_{s=k+1}^r \int_{Y} \varphi_s \circ a(t_s) \,  d\nu
+O\left(
e^{-\eta(r-k)\, \Delta(t_1,\ldots,t_r)} \, {\prod}_{s=k+1}^r \|\varphi_s\|_{C^\ell} \right)
$$
for some $\eta(r-k)>0$. 
Therefore, the above estimate gives a favourable bound provided that 
$$
\delta/(2c_{k})-2(r-k) \max(m,n)/c_{k+1}>0.
$$

\vspace{0.2cm}

\noindent{\textsc{Case $1^{\prime\prime}_{k}$}:} {\it 
	at least one of the sets
	\[
	J_s := \big\{ j \in I_s^c \,  : \,  t_{s,j} > (1/\lambda c_k)\Delta(t_1,\ldots,t_r) \big\},\quad s=1\ldots, k
	\]
	is non-empty.}

\vspace{0.1cm}

We introduce the new sets
\[
I'_{s} := I_s \sqcup J_s,\quad s=1,\ldots,k,
\]
and note that
\[
\Delta(t_1,\ldots,t_r) \leq \lambda c_{k} \min\big(\lfloor t_1 \rfloor_{I_1'},\ldots,\lfloor t_r \rfloor_{I_k'}\big).
\]
We can restart {\sc Case} $1_{k}$ with $\lambda c_{k}$ instead of $c_{k}$ and $I_s'$ instead of $I_s$. We apply this argument recursively.
Then eventually we arrive to {\sc Case} $1^\prime_{k}$ at some point
and we obtain the bound: 
\begin{align*}
	&\int_Y f_k \left(\prod_{s=k+1}^r \varphi_s \circ a(t_s)\right) \,  d\nu \\
	=& \left(\int_Y f_k\, d\nu\right)\prod_{s=1}^k \int_{Y} \varphi_s \circ a(t_s) \,  d\nu\\
	&\quad\quad  +O\left(
	e^{-(\delta/(2 \lambda^{r(m+n)} c_{k})-2(r-k)\max(m,n)/c_{k+1})\Delta(t_1,\ldots,t_r)} \, {\prod}_{s=1}^r \|\varphi_s\|_{C^\ell} \right),
\end{align*}
or otherwise we end up in {\sc Case $2_{k}$} below.
We note that this bound is favourable provided that 
\begin{equation}\label{eq:c2}
\delta/(2 \lambda^{r(m+n)} c_{k})-2(r-k)\max(m,n)/c_{k+1}>0.
\end{equation}

The remaining case is:

\vspace{0.2cm}

\noindent {\sc Case $2_{k}$:} {\it 
	we have
	$$
	\Delta(t_1,\ldots,t_r) > c_{k+1}\max(m,n)^{-1}\max\big(\lceil t_{k+1} \rceil,\ldots \lceil t_r \rceil\big),
	$$
	and for {all} admissible $I_1,\ldots,  I_{k} \subset \{1,\ldots,m+n\}$,  we have
	\[
	\Delta(t_1,\ldots,t_r) > c_{k} \min\big(\lfloor t_1\rfloor_{I_1},\ldots  \lfloor t_{k}\rfloor_{I_{k}}\big).
	\]
}

\vspace{0.1cm}

As above, the last inequality implies that 
\[
\Delta(t_1,\ldots,t_r) > c_{k}\max(m,n)^{-1} \min\big(\lceil t_1 \rceil,\ldots  \lceil t_{k} \rceil\big).
\]
This means that there exists $1\le s\le  k$ such that 
\[
\Delta(t_1,\ldots,t_r) > c_{k}\max(m,n)^{-1} \lceil t_s \rceil.
\]
Without loss of generality, we may assume that 
$$
\Delta(t_1,\ldots,t_r) > c_{k}\max(m,n)^{-1} \lceil t_{k} \rceil.
$$
Since $c_{k+1}\ge c_{k}$, we have 
$$
\Delta(t_1,\ldots,t_r) > c_{k}\max(m,n)^{-1} \max\big( \lceil t_{k} \rceil,\ldots \lceil t_{r} \rceil\big).
$$

We analyze this case by considering {\sc Cases} 
$1^\prime_{k-1}$, $1^{\prime\prime}_{k-1}$, $2_{k-1}$ and so on.
Proceeding in this way, we end up with the final case when
\begin{equation}\label{eq:imposs}
\Delta(t_1,\ldots,t_r) > c_{1}\max(m,n)^{-1} \max\big( \lceil t_{1} \rceil,\ldots \lceil t_{r} \rceil\big).
\end{equation}
We observe that for some $1\le s_1,s_2\le r$ and $1\le k\le m+n$,
$$
\Delta(t_1,\ldots,t_r)=\|t_{s_1}-t_{s_2}\|=|t_{s_1,k}-t_{s_2,k}|\le \max\big(\lceil t_{s_1} \rceil, \lceil t_{s_2} \rceil\big).
$$
Since $c_1>\max(m,n)$, it follows that \eqref{eq:imposs} is impossible.
Hence, we have covered all the cases.

It remains to justify the choice of parameters $c_1,\ldots,c_r$. We recall that they must satisfy \eqref{eq:c1} and  \eqref{eq:c2}.
Therefore, we can choose them recursively so that $c_1>\max(m,n)$ and
$$
c_{k+1}>\max\left(1, 4(r-k)\max(m,n)\lambda^{r(m+n)}/\delta \right) c_k.
$$
This completes the proof of Theorem \ref{th:new}.

\end{document}